\documentclass[12pt]{amsart}
\usepackage{amsthm}
\usepackage{aliascnt}
\usepackage{hyperref}
\usepackage[capitalize,nameinlink]{cleveref}

\usepackage{amssymb}
\usepackage{epsfig}
\usepackage{amsfonts}
\usepackage{amscd,color}
\usepackage{tikz}
\usetikzlibrary{cd}
\usetikzlibrary{tqft}
\usepackage{youngtab}
\usepackage{ytableau}
\usepackage[notref, notcite,final]{showkeys}
\usepackage{comment}

\usepackage[all]{xy}
\usepackage{xypic,amscd,epsfig}
\usepackage{relsize}

\newcommand{\Mb}{\overline{\mathcal{M}}}

\newcommand{\Z}{\mathbb{Z}}

\newcommand{\C}{\mathbb{C}}

\newcommand{\bP}{\mathbb{P}}
\newcommand{\cO}{\mathcal{O}}
\newcommand{\cS}{\mathcal{S}}
\newcommand{\cQ}{\mathcal{Q}}

\newcommand{\GL}{\,\mathrm{GL}}
\newcommand{\QH}{\,\mathrm{QH}}
\newcommand{\QK}{\,\mathrm{QK}}
\newcommand{\K}{\,\mathrm{K}}

\newcommand{\Gr}{\,\mathrm{Gr}}

\newcommand{\Kpt}{\mathrm{Rep}_{T}}
\newcommand{\T}{\,T}

\newcommand{\mb}[1]{\mathbb{#1}}
\newcommand{\mc}[1]{\mathcal{#1}}

\newcommand{\bs}[1]{\boldsymbol{#1}}
\newcommand{\op}[1]{\operatorname{#1}}

\newcommand{\ev}{\mathrm{ev}}

\newcommand{\pt}{\operatorname{pt}}
\newcommand{\ch}{\operatorname{ch}}

\newcommand{\Eu}{\operatorname{Eu}}
\newcommand{\YB}{\operatorname{YB}}
\newcommand{\Rep}{\operatorname{Rep}}

\newcommand{\bbV}{\bold{V}}
\newcommand{\ve}{\varepsilon}

\newcommand{\bfe}{\mathbf{e}}

\newcommand{\frob}[1]{\left(\!\left(#1\right)\!\right)}

\newtheorem{defn}{Definition}[section]

\newtheorem{remark}{Remark}[section]
\newtheorem{thm}{Theorem}[section]
\newtheorem{prop}{Proposition}[section]

\newaliascnt{cor}{prop}
\newtheorem{cor}[cor]{Corollary}
\aliascntresetthe{cor}

\newaliascnt{lemma}{prop}
\newtheorem{lemma}[lemma]{Lemma}
\aliascntresetthe{lemma}

\newaliascnt{example}{prop}
\newtheorem{example}[example]{Example}
\aliascntresetthe{example}

\newaliascnt{conjecture}{prop}

\aliascntresetthe{conjecture}

\crefname{prop}{proposition}{propositions}
\Crefname{prop}{Proposition}{Propositions}

\crefname{lemma}{lemma}{lemmas}
\Crefname{lemma}{Lemma}{Lemmas}

\crefname{cor}{corollary}{corollaries}
\Crefname{cor}{Corollary}{Corollaries}

\crefname{example}{example}{examples}
\Crefname{example}{Example}{Examples}

\crefname{conjecture}{conjecture}{conjectures}
\Crefname{conjecture}{Conjecture}{Conjectures}

\numberwithin{equation}{section}

\usepackage{chngcntr}
\counterwithout{figure}{section}
\crefname{figure}{Fig.}{Figs.}
\Crefname{figure}{Fig.}{Figs.}

\newcommand{\connecting}[2]{\tikz[baseline=1ex]{
\draw[black] (0,0) -- (0,0.5);
\node at (0,-0.2) {$#2$};
\node at (0,0.7) {$#2$};
\draw[black] (-0.25, 0.25) -- (-0.05, 0.25);
\draw[black] (0.05, 0.25) -- (0.25, 0.25);
\node at (0.39,0.25) {$#1$};
\node at (-0.39,0.25) {$#1$};}}

\newcommand{\avoiding}[2]{\tikz[baseline=1ex]{
\node at (0.39,0.25) {$#2$};
\node at (-0.39,0.25) {$#1$};
\draw[black] (-0.25, 0.25) .. controls (0, 0.25) .. (0,0);
\draw[black] (0, 0.5) .. controls (0, 0.25) .. (0.25,0.25);
\node at (0,-0.2) {$#1$};
\node at (0,0.7) {$#2$};
} }

\newcommand{\passing}[2]{\tikz[baseline=1ex]{
\draw[black] (-0.25,0.25) -- (0.25,0.25);
\node at (0.39,0.25) {$#1$};
\node at (-0.39,0.25) {$#1$};
\draw[black] (0, 0) -- (0, 0.2);
\draw[black] (0, 0.3) -- (0, 0.5);
\node at (0,-0.2) {$#2$};
\node at (0,0.7) {$#2$};}}

\newcommand{\Connecting}[4]{\tikz[baseline=1ex]{
\draw[black] (0,0) -- (0,0.5);
\node at (0,-0.2) {$#4$};
\node at (0,0.7) {$#2$};
\draw[black] (-0.25, 0.25) -- (-0.05, 0.25);
\draw[black] (0.05, 0.25) -- (0.25, 0.25);
\node at (0.39,0.25) {$#3$};
\node at (-0.39,0.25) {$#1$};}}

\newcommand{\Avoiding}[4]{\tikz[baseline=1ex]{
\node at (0.39,0.25) {$#3$};
\node at (-0.39,0.25) {$#1$};
\draw[black] (-0.25, 0.25) .. controls (0, 0.25) .. (0,0);
\draw[black] (0, 0.5) .. controls (0, 0.25) .. (0.25,0.25);
\node at (0,-0.2) {$#4$};
\node at (0,0.7) {$#2$};
} }

\begin{document}

\title[QK structure constants and cylindric Grothendieck polynomials]{Quantum K-structure constants and cylindric Grothendieck polynomials for Grassmannians}

\author{Vassily Gorbounov}
\address{Faculty of Mathematics, National Research University, Higher School of Economics, Usacheva 6, 119048 Moscow, Russia} 
\email{vgorb10@gmail.com}

\author{Christian Korff}
\address{School of Mathematics and {S}tatistics, University of Glasgow, Glasgow G12 8QQ, UK}
\email{Christian.Korff@glasgow.ac.uk}

\author{Leonardo C.~Mihalcea}
\address{
Department of Mathematics, 
460 McBryde Hall, 225 Stanger St.
Virginia Tech University, 
Blacksburg, VA 24061
USA
}
\email{lmihalce@vt.edu}

\subjclass[2020]{Primary: 14N35, 82B23, 05E05; Secondary: 14M15, 14N15, 17B80, 55N20, 81R12}
\keywords{Quantum K-theory, Yang-Baxter algebra, Grassmannians, Bethe ansatz} 

\date{21 October 2026}

\begin{abstract} We use the partition functions of a five-vertex exactly solvable lattice model from statistical mechanics to give new combinatorial formulae for the Schubert structure constants of torus-equivariant quantum K-theory rings of Grassmannians. The proofs build on our earlier results where we identified the transfer matrices of these lattice models geometrically as multiplications by the Hirzebruch $\lambda_y$ classes of the tautological subbundle and the (dual) quotient bundle. The structure constants are solutions of a system of linear equations with coefficients given by the partition function. This system may be solved explicitly by specializing 
the Hirzebruch parameter $y$ to an equivariant parameter, leading to 
explicit formulae using Cramer's rule. Furthermore, we give an alternative combinatorial construction of the partition functions in terms of sums over cylindric set-valued tableaux, which we call cylindric Grothendieck polynomials. The latter generalise McNamara's factorial skew Grothendieck and Yeliussizov's modified Grothendieck polynomials and can be thought of as an equivariant K-theoretic version of Postnikov's toric Schur function. We also provide new polynomial representatives for equivariant K-theory classes of Richardson varieties and their variants.
\end{abstract}

\maketitle
\tableofcontents

\section{Introduction} Let $X=\Gr(k,n)$ be the Grassmann manifold of $k$-dimensional 
linear subspaces in $\C^n$, considered with the action of $T \subset \GL_n$, 
the maximal torus of diagonal matrices. Among the most studied objects in 
Schubert calculus are the Schubert structure constants
$c_{\lambda,\mu}^\nu \in \Z_{\ge 0}$ arising from the multiplication 
\begin{equation}\label{E:multH} \sigma_\lambda \cdot \sigma_\mu = \sum c_{\lambda, \mu}^\nu \sigma_\nu \end{equation} 
of two Schubert classes $\sigma_\lambda, \sigma_\mu$ in the cohomology ring of $X$. Here 
$\lambda,\mu,\nu$
are partitions included in the $k \times (n-k)$ rectangle. 
There are a number of ways the structure constants $c_{\lambda,\mu}^\nu$ arise purely combinatorially, and 
the route closest to our approach is 
given by the Schur expansion of skew Schur functions \cite{FultonYoungTableaux}:
\begin{equation}\label{E:skewSchur} s_{\nu/\lambda} = \sum c_{\lambda, \mu}^\nu s_\mu \/. \end{equation}
Algebraically, the skew Schur function arises naturally when considering the cohomology ring as a Frobenius algebra \cite{abrams1996two} and computing the co-product of a Schubert class. The multiplication \cref{E:multH} and Frobenius structure can be deformed in a number of ways, using other cohomology theories 
(e.g., equivariant cohomology, or K-theory), or quantum versions of such. 
The common feature of all these rings is that they admit a Schubert basis over an appropriate base ring, thus there are well-defined Schubert structure constants. 

\subsection{Equivariant quantum K-theory}
The current paper will be concerned with $\QK_T(X)$, the $T$-equivariant quantum K-theory
ring of $X$, defined by Givental and Lee \cite{givental:onwdvv,lee:QK}. This is a deformation of the equivariant K-theory ring $\K_T(X)$, and it admits a basis by 
Schubert classes, defined as fundamental classes of Schubert varieties in $X$. Denote by $\cO_\lambda$ 
its Schubert basis, with conventions spelled out 
in \cref{sec:schub-preliminaries} below. 
Then $\QK_T(X)= \bigoplus_\lambda \K_T(\pt)[\![q]\!] \cO_\lambda$, where 
$q$ is the Novikov variable, and $\K_T(\pt) = \Z[\ve_1^{\pm 1}, \ldots,  \ve_n^{\pm 1}]$
is the $T$-equivariant K-theory of the point, a Laurent polynomial ring in the 
$T$-characters $\ve_1, \ldots, \ve_n$ of $\C^n$. The analogue of \cref{E:multH} is
\begin{equation}\label{E:multQKT} 
\cO_\lambda \star \cO_\mu = \sum_{\nu, d \ge 0} C_{\lambda, \mu}^{\nu,d}q^d \cO_\nu \end{equation} 
with structure constants $C_{\lambda, \mu}^{\nu,d} \in \K_T(\pt)$, and $\star$ the quantum K product, a deformation of the usual intersection product ``$\cdot$".
These structure constants specialize to those in various (quantum) cohomology theories of $X$. 
Most interestingly, they are, or are conjectured to be, appropriately positive. For example, 
in the non-equivariant ring $\QK(X)$, the structure constants are integers, and it was 
recently proved in \cite{BCMP:qkpos} that 
\[ (-1)^{|\lambda|+|\mu|-|\nu|-nd} C_{\lambda,\mu}^{\nu,d} \in \Z_{\ge 0} \/. \]
This statement generalizes the positivity of Gromov-Witten invariants in quantum cohomology (when $|\lambda|+|\mu|-|\nu|-nd=0$), and the alternating positivity in K-theory \cite{Buch2002,Brion2002} (when $d=0$); see \cite{Graham2001,anderson.griffeth.miller:Kpos} for equivariant cases.

Given the prominent role of the skew Schur functions in finding a positive formula for 
$c_{\lambda,\mu}^\nu$ (the classical Littlewood-Richardson rule), it is natural to 
ask whether one can also find an 
analogue of \cref{E:skewSchur} in the quantum K setting. A `precursor' in the case of quantum cohomology is given by Postnikov 
\cite{postnikov:affine} who introduced {\em toric Schur functions} $s_{\nu/d/\lambda}$ associated
to a cylindric shape determined by two ordinary partitions $\lambda,\nu$ and an integer $d$. They are
a family of symmetric functions included in the larger set of cylindric 
Schur functions defined by Gessel and Krattenthaler \cite{gessel1997cylindric}, 
and generalize the ordinary skew Schur functions by arranging skew tableaux on a cylinder. 
Furthermore, the Schur expansion of toric Schur functions is non-negative:
\begin{equation} \label{E:toricSchur}
s_{\nu/d/\lambda} = \sum c_{\lambda, \mu}^{\nu,d} s_\mu \/.
\end{equation}
Here $c_{\lambda,\mu}^{\nu,d}$ is the (ordinary, genus $0$, 
$3$-point) Gromov-Witten invariant, which counts the 
number of rational curves of degree $d$ in $X$ passing 
through general translates of the Schubert varieties associated to $\lambda, \mu$ 
and complement of $\nu$. At the same time, 
$c_{\lambda, \mu}^{\nu,d}$ is a structure constant in $\QH^*(X)$, the quantum cohomology ring of $X$.
Positivity of the structure constants follows from geometric reasons, and, unfortunately, at this time there is no known  
`cylindric Littlewood-Richardson rule'. The only manifestly positive formula arises from the 
`quantum=classical' phenomenon of Buch, Kresch and Tamvakis \cite{buch.kresch.ea:gromov-witten}, who proved 
that $c_{\lambda,\mu}^{\nu,d}$ are equal
to certain Schubert structure constants on a 
two-step flag manifold; combinatorial 
positive formulae for those are known \cite{BKPT2016,Buch2015,KnutsonZinnJustin2026}. 

\subsection{Main goals of this article}
Our goals in this paper are:

(I) Find a combinatorial formula for the quantum K-structure constants $C_{\lambda,\mu}^{\nu,d}$;

(II) Define cylindric (factorial, skew) Grothendieck polynomials giving an analogue of \cref{E:skewSchur}
in quantum K-theory, and find a (set-valued) tableau formula for them. 

The formulae we obtain for the structure constants $C_{\lambda,\mu}^{\nu,d}$ arise from solving a 
certain system of linear equations, 
and are combinatorially explicit 
(cf.~\Cref{thm:alg1} and \Cref{thm:Cramer} below), but not cancellation free. Due to their equivariant nature, 
one may think of them as related to formulae for Schubert structure 
constants using Chevalley recursions 
\cite{Mihalcea2006,BCMP:qkchev}. In fact, 
our formulae seem new already in the classical ring
$\K_T(X)$, the equivariant K-theory of Grassmannians. The cylindric Grothendieck polynomials in (II)  generalize both Postnikov's toric Schur functions and McNamara's factorial (skew) Grothendieck polynomials \cite{mcnamara:factorial}, and also Yeliussizov's modified 
Grothendieck polynomials \cite{yeliussizov2019symmetric}.

\subsection{Main technique: partition functions from exactly solvable lattice models}
The novelty in our approach to answer (I) and (II) is that it 
has roots in the physics of lattice models. A key observation from \cite{korff2014quantum} is that Postnikov's toric Schur 
functions may be realized in two (dual) ways, as partition functions of certain exactly solvable 
five-vertex models from 
statistical mechanics called {\em vicious} and {\em osculating walkers} \cite{fisher1984walks,brak1999bethe,bousquet2006three} by considering a cylindrical square lattice. 
The same five-vertex models describe dimer coverings on the honeycomb 
lattice \cite{Huang1996} and are special degenerations of the celebrated six-vertex model in statistical mechanics whose free energy was first calculated by Lieb \cite{lieb1967residual} and Sutherland \cite{sutherland1967exact} using the Bethe ansatz. Baxter showed that these solutions could be understood in the much broader framework of commuting transfer matrices \cite{Baxter:book} and there has been in recent years much activity by several groups to understand symmetric and other special functions in terms of such exactly solvable lattice models; see e.g. \cite{zinnjustin2009sixvertex,brubaker2011schur,bump:mcnamara:nakasuji,korff2014quantum,brubaker2020vertex,brubaker2021colored,korff21,buciumas2022double,corteel2022vertex,brubaker2023frozen,aggarwal2023free,naprienko2024free,brubaker2025lattice} and references therein for a small sample of works.

In this article we generalize the connection between partition functions of exactly solvable 
lattice models and quantum Schubert Calculus to $\QK_T(X)$, i.e. we compute the partition 
functions of the exactly solvable five-vertex models with K-theoretic weights using the 
results from \cite{gorbounov.korff:unpublished,gorbounov2017quantum,gorbounov.korff.mihalcea:YB}. 
We use those to give two formulae for the structure constants $C_{\lambda,\mu}^{\nu,d}$, 
one of which is in form of a determinant.

\subsection{Statement of results} We now give a more precise account of our results. Fix two partitions
$\lambda, \nu$ included in the $k \times (n-k)$ rectangle. The quantum K ring $\QK_T(X)$ is equipped with a Frobenius pairing $(\!(\cdot, \cdot)\!)$, generalizing the usual intersection pairing giving the holomorphic
Euler characteristic. Denote by $\mathcal{I}^\lambda_q$ the dual classes of the Schubert classes, i.e. 
$(\!( \cO_\lambda, \mathcal{I}^{\nu}_q)\!) = \delta_{\lambda,\nu}$. 
Denote also by $0 \to \cS \to \C^n \to \cQ \to 0$ the tautological sequence bundles on $X$, and let 
$\lambda_y(\cQ^\vee) = \sum y^i \wedge^i \cQ^\vee$, regarded as a class in $\K_T(X)[y]$, and where 
$y$ is a formal variable. Using the Frobenius bilinear form  of $QK_T(Gr(k,n))$ (see e.g. \cite[Section 12]{gorbounov.korff.mihalcea:YB} as well as references therein) we identify the partition function of the lattice model with the following matrix element which we call the {\bf geometric partition function} 
\begin{equation} 
Z_\lambda^\nu(z|\ve;q)=(\!(\cO_\lambda \star \mathcal{I}^\nu_q, \lambda_{z_1}(\cQ^\vee) \star \cdots 
\star \lambda_{z_k}(\cQ^\vee) )\!)\;. 
\end{equation}
This is a polynomial function in $\Z[z;\ve^{-1}; q]$, with $\ve^{-1}=(\ve_1^{-1}, \ldots, \ve_n^{-1})$,  symmetric in variables $z=(z_1, \ldots, z_k)$. A key fact proved in 
\cite{gorbounov.korff.mihalcea:YB} and building on results from \cite{gorbounov2017quantum} is that 
the multiplication operator by $\lambda_y(\cQ^\vee)$ is a transfer matrix operator, i.e. it coincides with the 
action of the quantum trace of the monodromy matrix giving the action of a certain Yang-Baxter algebra.
The lattice model associated to this action gives the more familiar graphical formula for the partition function as sums over weighted configurations from composition of transfer matrices
\[ Z_\lambda^\nu(z|\ve;q)= \sum_\gamma \mathrm{wt}(\gamma)\,, \]
where $\gamma$ varies over the set of configurations of a rectangular lattice with $k$ rows, with the top indexed by the $01$ word of $\lambda$, the bottom by the $01$ word of $\nu$, and which are {\em cylindrical}, i.e. the left lattice edge is identified with the right edge. The weight of a configuration $\gamma$ is the product of the weights
of each vertex in it, calculated from the Boltzmann weights associated to the lattice model. We illustrate this in the picture and example below, and we refer to \Cref{sec:graphical-calculus} for precise conventions and definitions.
\begin{figure}[h!]
\centering
\includegraphics[width=.8\textwidth]{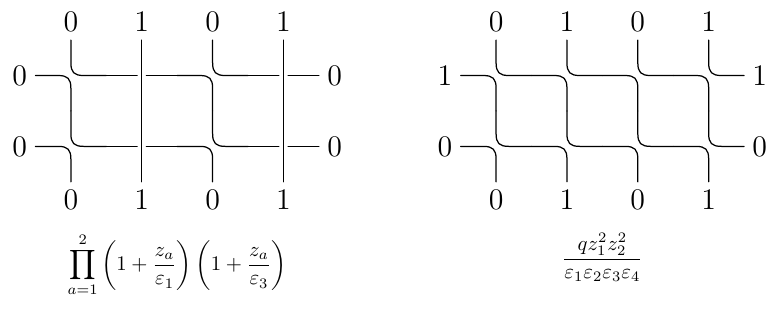} 
\vskip-0.6cm
\caption{The two possible lattice configurations for $\Gr(2,4)$ and $\lambda=\nu=(2,1)$ with $01$ words $J_\lambda=1010$ (right to left reading).}\label{fig:Gr24example-intro}
\end{figure}
\begin{example}\label{ex:Gr24-21}\rm
 Consider $X=\Gr(2,4)$, with $\lambda=\nu=(2,1)$. In this case there are only two possible lattice configurations which are depicted in
 \Cref{fig:Gr24example-intro}. 
    The power of $q$ occurs since we have a line carrying a 1-letter crossing the boundary of the cylinder. The partition function is
\begin{equation}\label{Z24-21-intro}
\begin{aligned}
Z_{(2,1)}^{(2,1)}(z_1,z_2\mid\ve;q)
={}&
\prod_{i=1}^{2}
\left(1+\frac{z_i}{\epsilon_1}\right)
\left(1+\frac{z_i}{\epsilon_3}\right)
+\frac{qz_1^2z_2^2}
{\epsilon_1\epsilon_2\epsilon_3\epsilon_4}.
\end{aligned}
\end{equation}
%\begin{equation}\label{Z24}
%        Z_\lambda^\nu(z_1, \ldots, z_k|\ve;q)=\frac{z_1^2z_2^2}%{\ve_2^2\ve^2_4}+q(1+z_1/\ve_2)(1+z_2/\ve_1)(1+z_1/\ve_4)%(1+z_2/\ve_3)\;.
%\end{equation}
\end{example}
Our first result relates structure constants $C_{\lambda,\mu}^{\nu,d}$ to partition functions:
\begin{thm}[cf. \Cref{prop:structure-constants}]\label{prop:structure-constants-intro} The partition function ${Z}_\lambda^\nu$ satisfies
\begin{equation}\label{E:linsystem-intro} {Z}_\lambda^\nu(z_1, \ldots, z_k|\ve;q) = \sum_{\mu,d} Z_{\emptyset}^{\mu}(z_1, \ldots, z_k|\ve;q) q^d C_{\lambda,\mu}^{\nu,d} \/. \end{equation}
\end{thm}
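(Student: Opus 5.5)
The plan is to work entirely inside the Frobenius algebra $\QK_T(X)$ with its pairing $(\!(\cdot,\cdot)\!)$, and to use only two facts. The first is that the quantum K product $\star$ is commutative and associative. The second is that the pairing is Frobenius, i.e.\ $(\!(a\star b, c)\!) = (\!(a, b\star c)\!)$. I write $\Lambda(z) := \lambda_{z_1}(\cQ^\vee)\star\cdots\star\lambda_{z_k}(\cQ^\vee)$, so that by definition $Z_\lambda^\nu = (\!(\cO_\lambda\star\mathcal{I}^\nu_q,\Lambda(z))\!)$. Since $\cO_\emptyset = 1$ is the unit of $\QK_T(X)$, the special case $\lambda=\emptyset$ reads $Z_\emptyset^\mu = (\!(\mathcal{I}^\mu_q,\Lambda(z))\!)$. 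The strategy is to expand $\cO_\lambda\star\mathcal{I}^\nu_q$ in the dual basis $\{\mathcal{I}^\mu_q\}$ and identify the coefficients with the structure constants.

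The key step is the expansion identity
\[
\cO_\lambda\star\mathcal{I}^\nu_q=\sum_{\mu}\Big(\sum_{d}q^dC_{\lambda,\mu}^{\nu,d}\Big)\mathcal{I}^\mu_q .
\]
To prove it, note that $\{\mathcal{I}^\mu_q\}$ is a $\K_T(\pt)[\![q]\!]$-basis, being dual to the Schubert basis under a nondegenerate pairing. So we can write $\cO_\lambda\star\mathcal{I}^\nu_q=\sum_\mu a_\mu\mathcal{I}^\mu_q$. Pairing with $\cO_\mu$ and using duality gives $a_\mu=(\!(\cO_\lambda\star\mathcal{I}^\nu_q,\cO_\mu)\!)$. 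By the Frobenius property and commutativity this equals $(\!(\mathcal{I}^\nu_q,\cO_\lambda\star\cO_\mu)\!)$. Inserting \eqref{E:multQKT} and using $(\!(\mathcal{I}^\nu_q,\cO_\rho)\!)=\delta_{\nu\rho}$ then yields $a_\mu=\sum_d q^dC_{\lambda,\mu}^{\nu,d}$.

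Finally, pair the expansion with $\Lambda(z)$ and use $\K_T(\pt)[\![q]\!]$-bilinearity of the pairing (extended $z$-linearly) to obtain $Z_\lambda^\nu=\sum_{\mu,d}q^dC^{\nu,d}_{\lambda,\mu}Z_\emptyset^\mu$, which is \eqref{E:linsystem-intro}.

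The main point needing care is the bookkeeping of base rings and convergence. The pairing is $\K_T(\pt)[\![q]\!]$-bilinear, and the sum over $d$ in \eqref{E:multQKT} is a priori a power series. For Grassmannians, however, the QK structure constants are known to vanish for large $d$; alternatively, both sides are manifestly polynomial in $q$ by the lattice-model description. Either way the manipulations are legitimate, and the $y$-parameters $z_i$ enter only polynomially through $\Lambda(z)$.

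One must also confirm that the dual classes $\mathcal{I}^\nu_q$ are taken with respect to the quantum pairing, which is the convention used in the definition of $Z_\lambda^\nu$, and not the classical one. With that convention the argument above goes through verbatim.
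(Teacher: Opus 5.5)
Your proof is correct. It is the same duality computation as the paper's, but you put the basis expansion on the other factor of the pairing.

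The paper expands $\Lambda(z)$ in the Schubert basis, $\Lambda(z)=\sum_\mu A_\mu(z|\ve)\,\cO_\mu$, using Proposition~\ref{prop:expansion-ly}(c). That proposition rests on the $q$-independence in Lemma~\ref{lemma:noq}, on localization, and on a Cauchy identity. The paper then applies $\frob{\cO_\lambda\star\mathcal{I}^\nu_q,\cO_\mu}=\frob{\cO_\lambda\star\cO_\mu,\mathcal{I}^\nu_q}$ to obtain $Z^\nu_\lambda=\sum_{\mu,d}A_\mu(z|\ve)\,q^dC^{\nu,d}_{\lambda,\mu}$. Finally it sets $\lambda=\emptyset$ to identify $A_\nu=Z^\nu_\emptyset$.

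You instead expand $\cO_\lambda\star\mathcal{I}^\nu_q=\sum_\mu C^\nu_{\lambda,\mu}\,\mathcal{I}^\mu_q$ and read off $\frob{\mathcal{I}^\mu_q,\Lambda(z)}=Z^\mu_\emptyset$ directly.

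\textbf{What your route buys.} It is more elementary and more general. It uses only commutativity, the symmetric Frobenius pairing, the fact that $\{\mathcal{I}^\mu_q\}$ is a basis (the Gram matrix is unitriangular mod $q$), and $\cO_\emptyset=1$. So the identity holds with $\Lambda(z)$ replaced by any class, and you need neither Lemma~\ref{lemma:noq} nor the explicit Schubert expansion.

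\textbf{What the paper's route buys.} It identifies $Z^\mu_\emptyset$ with the explicit, $q$-independent polynomials $A_\mu(z|\ve)$ representing the ideal sheaves $\mathcal{I}^\mu$. This is what is needed later: under the specialization $z\mapsto-\ve_\eta$ the linear system becomes upper triangular, which gives the Cramer and localization formulas.

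\textbf{A minor point.} Your convergence caveat is unnecessary. All sums over $\mu$ are finite, and the identity holds in $\K_T(\pt)[\![q]\!][z_1,\ldots,z_k]$ without appealing to finiteness of quantum K-theory. Also, the lattice model gives polynomiality in $q$ of the $Z$'s, not of the individual structure constants, so it does not make the right-hand side ``manifestly'' polynomial.
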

The main ingredient in the proof is 
that the product $\lambda_{z_1}(\cQ^\vee) \star \cdots \star
 \lambda_{z_k}(\cQ^\vee)$ coincides with the classical product in $\K_T(X)$, thus 
 is independent of $q$. As a corollary we obtain a geometric interpretation of the partition function. 
 
Set $\mc{I}^\lambda=[\cO_{X^\lambda}(-\partial X^\lambda)]$ to be 
the class of the ideal sheaf of the boundary $\partial X^\lambda$ 
of the opposite Schubert variety $X^\lambda$.
\begin{cor}[cf.~\Cref{cor:part-as-class}]\label{cor:part-as-class-intro} Regard the variables $-z_i$ as K-theoretic Chern roots of $\cS$,
i.e., $\prod (1-yz_i) = \lambda_y(\cS)$, and the partition function $Z_\lambda^\nu(z_1, \ldots, z_k|\ve;q)$ 
as an element in $\K_T(X)[q]$. Then
\[ Z_\lambda^\nu(z_1, \ldots, z_k|\ve;q)=\sum_{\mu,d} C_{\lambda,\mu}^{\nu,d}q^d \mathcal{I}^\mu \/.\]
If one specializes to $q=0$, then
\[ Z_\lambda^\nu(z_1, \ldots, z_k|\ve;0) = \cO_\lambda \cdot \mathcal{I}^\nu \/\]
as an element in $\K_T(X)$. 
In particular, 
\[ Z_\lambda^\nu(z_1, \ldots, z_k|\ve;0)= [\cO_{X_\lambda \cap X^\nu}(- X_\lambda \cap \partial X^\nu )] \/, \]
the class associated 
to the boundary $X_\lambda \cap \partial X^\nu$ of the Richardson variety
$X_\lambda \cap X^\nu$.
\end{cor}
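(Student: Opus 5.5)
Write $\Lambda(z):=\lambda_{z_1}(\cQ^\vee)\star\cdots\star\lambda_{z_k}(\cQ^\vee)$. The plan is to derive the corollary from \Cref{prop:structure-constants-intro} together with the fact quoted after it, namely that $\Lambda(z)$ coincides with the classical product $\lambda_{z_1}(\cQ^\vee)\cdots\lambda_{z_k}(\cQ^\vee)$ in $\K_T(X)$. The first step is to identify $\Lambda(z)$, under the convention $\prod_i(1-yz_i)=\lambda_y(\cS)$, with a class that reproduces arbitrary symmetric functions of $z$ as elements of $\K_T(X)$. Since $\lambda_y(\cS)\lambda_y(\cQ)=\lambda_y(\C^n)$ is a product of equivariant scalars, $\lambda_{z_i}(\cQ^\vee)$ can be rewritten through the Chern roots $-z_i$ of $\cS$. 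The upshot I aim for is that pairing with $\Lambda(z)$ is a ``Cauchy kernel'': for any class $a\in\K_T(X)$ I expect
\[ \frob{a,\Lambda(z)} = a \]
once the right side is read as a symmetric polynomial in the Chern roots $-z_i$. This is a localization and residue computation: pair via $\chi$ over $X$, use $T$-fixed points, and note that the kernel $\prod_{i,j}$-type expression is the K-theoretic Euler class of the diagonal in $X\times X$.

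Granting that kernel property at $q=0$, set $q=0$ in the definition. Then $\star$ becomes $\cdot$ and $\mathcal{I}^\nu_q$ becomes $\mathcal{I}^\nu$, so
\[ Z_\lambda^\nu(z|\ve;0)=\chi\bigl(\cO_\lambda\cdot\mathcal{I}^\nu\cdot\Lambda(z)\bigr)=\cO_\lambda\cdot\mathcal{I}^\nu. \]
For general $q$, I first apply the kernel property with $a=\mathcal{I}^\mu$ to get $Z_\emptyset^\mu(z|\ve;q)=\mathcal{I}^\mu$. Here I use $\cO_\emptyset=1$, together with the fact that $\frob{\mathcal{I}^\mu_q,\Lambda}$ equals the classical pairing, because $\Lambda$ is classical and $\frob{\cdot,\cdot}$ is compatible with $\star$. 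Substituting into \Cref{prop:structure-constants-intro} gives $Z_\lambda^\nu=\sum_{\mu,d}C_{\lambda,\mu}^{\nu,d}q^d\mathcal{I}^\mu$. This step is the main obstacle. Specifically, I must check that $\frob{\mathcal{I}^\mu_q,\Lambda(z)}$ reduces to the classical image of $\mathcal{I}^\mu$ rather than a $q$-deformed class. For this I would use that $\frob{a\star b,c}=\frob{a,b\star c}$ and that $\Lambda\star\cO_\nu$ has no $q$-terms; that the $\Lambda$-product is classical is exactly what I would invoke. If this fails, I would instead set $\lambda=\emptyset$ in the classical $q=0$ identity and argue that $Z^\mu_\emptyset$ is independent of $q$, because the lattice configurations starting from the empty word cannot wrap around the cylinder.

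Finally, for the Richardson statement I use the standard $\K_T$ facts. First, $\mathcal{I}^\nu=[\cO_{X^\nu}(-\partial X^\nu)]$. Second, $X_\lambda$ and $X^\nu$ (and likewise $\partial X^\nu$) meet properly with Cohen–Macaulay, Tor-vanishing intersections, by Brion's results or the Kleiman transversality argument. Tor-vanishing gives $\cO_\lambda\cdot[\cO_{X^\nu}(-\partial X^\nu)]=[\cO_{X_\lambda}\otimes\cO_{X^\nu}(-\partial X^\nu)]$. This equals $[\cO_{X_\lambda\cap X^\nu}(-X_\lambda\cap\partial X^\nu)]$, using the exact sequence $0\to\cO_{X^\nu}(-\partial X^\nu)\to\cO_{X^\nu}\to\cO_{\partial X^\nu}\to0$ restricted to $X_\lambda$, where exactness is preserved by the Tor-vanishing.
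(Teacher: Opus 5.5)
Your proposal is correct and is essentially the paper's argument. Your kernel property $\langle a,\Lambda(z)\rangle\equiv a$ (for the classical pairing) is the general form of \Cref{cor:Amu=I}, which the paper proves by specializing $z\mapsto-\ve_\eta$ so that $\Lambda$ becomes $\bfe_\eta$ (\Cref{lemma:loc-ideal}) and then using injectivity of localization. Part (a) then follows from \Cref{prop:structure-constants}, the $q=0$ case is the same localization argument applied to $\cO_\lambda\cdot\mathcal{I}^\nu$, and the Richardson identification is Brion's Tor-vanishing, as you say.

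One correction: your claim that $\Lambda\star\cO_\nu$ has no $q$-terms is false in general, since its coefficients are partition functions with genuine $q$-terms (cf.~\Cref{ex:Gr24}). You never need it, though. The $q$-independence of the Schubert expansion $\Lambda=\sum_\eta A_\eta\cO_\eta$ of $\Lambda$ itself (\Cref{lemma:noq}) already gives $\frob{\mathcal{I}^\mu_q,\Lambda}=A_\mu=\langle\mathcal{I}^\mu,\Lambda\rangle$, so your primary route works without the fallback.
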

Note that the last statement is immediate from Brion's results e.g. in \cite{brion:flagv}. Observe also that $Z_{\emptyset}^{\nu}(z_1, \ldots, z_k|\ve;q)$ is independent of $q$, and in particular it represents the class of the (ordinary) ideal sheaf $\mathcal{I}^\nu$; we omit $q$ and denote it simply by $Z_{\emptyset}^{\nu}(z_1, \ldots, z_k|\ve)$. 

\Cref{prop:structure-constants-intro} may be interpreted as a system of linear equations in $C_{\lambda, \mu}^\nu:=\sum_d C_{\lambda,\mu}^{\nu,d} q^d$, which may be solved using Cramer's rule. A drastic simplification occurs 
in the equivariant case. If one specializes $z$ at a sequence $-\ve_\eta$ coming from the 
weights over the $T$-fixed point $\bfe_\eta$, then $Z_\emptyset^\mu$ is the localization coefficient 
$\langle \mathcal{I}^\mu,\bfe_\eta\rangle$. 
In particular, the system of linear equations \cref{E:linsystem-intro} becomes upper triangular, 
and one obtains an explicit determinant formula for $C_{\lambda, \mu}^\nu$; see \Cref{thm:Cramer} below. 
Using a variant of \Cref{cor:part-as-class-intro}, this solution may also be seen explicitly (not as a determinant):

\begin{thm}[cf. \Cref{thm:alg1}] The Schubert structure constants 
$C_{\lambda, \mu}^\nu=\sum_d C_{\lambda,\mu}^{\nu,d} q^d$ in 
$\QK_T(X)$ are given by the following expression:
\begin{equation}\label{E:alg1-intro}
 C_{\lambda,\mu}^\nu= \sum_\eta \frac{1}{\Eu_\eta} \langle \cO_\mu, \bfe_\eta \rangle
 Z_\lambda^\nu(-\ve_\eta|\ve;q) \/. \end{equation}
 %Here $\Eu_\eta$ is the K-theoretic Euler class at the fixed point $\bfe_\eta$.
 \end{thm}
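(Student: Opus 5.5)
The plan is to use \Cref{cor:part-as-class-intro} and pass to torus localization. By that corollary, once the $-z_i$ are read as K-theoretic Chern roots of $\cS$, the partition function becomes the class
\[
Z_\lambda^\nu(z|\ve;q)=\sum_{\mu,d} C_{\lambda,\mu}^{\nu,d}q^d\,\mathcal{I}^\mu=\sum_\mu C_{\lambda,\mu}^\nu\,\mathcal{I}^\mu
\]
in $\K_T(X)[q]$. The structure constant $C_{\lambda,\mu}^\nu$ is then the coefficient of $\mathcal{I}^\mu$ in this class. We extract it with the classical K-theoretic duality $\chi(\cO_\mu\cdot\mathcal{I}^{\mu'})=\delta_{\mu,\mu'}$ on $\K_T(X)$. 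This is the $q=0$ version of the Frobenius duality, and it is the standard duality between Schubert classes and ideal sheaves of boundaries of opposite Schubert varieties (Brion). Pairing both sides with $\cO_\mu$ gives
\[
C_{\lambda,\mu}^\nu=\chi\bigl(\cO_\mu\cdot Z_\lambda^\nu(z|\ve;q)\bigr),
\]
where $\chi$ is the $T$-equivariant Euler characteristic, extended $q$-linearly.

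Next we evaluate $\chi$ by Atiyah--Bott/Lefschetz localization over the finitely many $T$-fixed points $\bfe_\eta$. For a class $a$ this reads $\chi(a)=\sum_\eta \langle a,\bfe_\eta\rangle/\Eu_\eta$, where $\Eu_\eta=\lambda_{-1}(T^\vee_{\bfe_\eta}X)$ is the K-theoretic Euler class of the cotangent space. Restriction to fixed points is a ring homomorphism, so
\[
\langle \cO_\mu\cdot Z_\lambda^\nu,\bfe_\eta\rangle=\langle\cO_\mu,\bfe_\eta\rangle\,\langle Z_\lambda^\nu,\bfe_\eta\rangle .
\]
Summing over $\eta$ with the weights $1/\Eu_\eta$ then gives \cref{E:alg1-intro}, provided we identify the remaining factor.

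The key step is identifying $\langle Z_\lambda^\nu,\bfe_\eta\rangle$ with the specialization $Z_\lambda^\nu(-\ve_\eta|\ve;q)$. The class $Z_\lambda^\nu$ is the image of a symmetric polynomial in $z$ under the map sending $\prod_i(1-yz_i)$ to $\lambda_y(\cS)$. At $\bfe_\eta$, the fiber of $\cS$ is spanned by the coordinate lines indexed by the $1$-positions of the $01$ word of $\eta$, so $\lambda_y(\cS)|_{\bfe_\eta}=\prod_{j\in\eta}(1-y\,\ve_j^{\pm1})$. Restricting therefore substitutes the $z_i$ by the corresponding characters, and symmetry in $z$ makes the order of substitution irrelevant.

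The main obstacle is getting the sign and inverse conventions right, so that this substitution is exactly $z=-\ve_\eta$ as the paper writes it. The paper states that $Z$ lies in $\Z[z;\ve^{-1};q]$ with factors like $(1+z_i/\ve_j)$, and that $Z^\mu_\emptyset(-\ve_\eta)=\langle\mathcal{I}^\mu,\bfe_\eta\rangle$. I would check the convention against this stated fact, and against \Cref{ex:Gr24-21}, where the factor $(1+z/\ve_1)$ vanishes at $z=-\ve_1$ as a localization of a Schubert-type class should. I would also confirm that the Euler characteristic pairing matches $(\!(\cdot,\cdot)\!)$ at $q=0$. This consistency is what guarantees that the coefficient extraction in the first step and the localization formula use matching dualities.
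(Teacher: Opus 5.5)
Your argument is correct, but it is organized differently from the paper's proof. It is essentially the derivation the paper only sketches in the remark after \Cref{thm:Cramer}: localize the class of $Z$ to get the triangular system \eqref{E:GW-eqs-v2}, then invert $L$ using $L^{-1}=(\Eu_\eta^{-1}\langle\cO_\mu,\bfe_\eta\rangle)$.

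The paper's main proof never passes to the class represented by $Z$. It proceeds in three steps:
\begin{enumerate}
\item It uses the Frobenius property to write $C_{\lambda,\mu}^\nu=\frob{\cO_\lambda\star\mathcal{I}^\nu_q,\cO_\mu}$.
\item It expands $\cO_\mu=\sum_\eta\Eu_\eta^{-1}\langle\cO_\mu,\bfe_\eta\rangle\bfe_\eta$ inside the quantum pairing.
\item It identifies $\frob{\cO_\lambda\star\mathcal{I}^\nu_q,\bfe_\eta}=Z_\lambda^\nu(-\ve_\eta|\ve;q)$ directly from the definition \eqref{defZ} and \Cref{lemma:loc-ideal}. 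That lemma says the product $\lambda_{z_1}(\cQ^\vee)\star\cdots\star\lambda_{z_k}(\cQ^\vee)$ specializes to $\bfe_\eta$ at $z=-\ve_\eta$; this is \Cref{cor:part-as-class}(b).
\end{enumerate}

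Your route rests on part (a) of that corollary instead. This is logically heavier, since it needs \Cref{prop:structure-constants} and \Cref{cor:Amu=I}, the latter itself built on \Cref{lemma:loc-ideal} and injectivity of localization. In exchange, everything after that point happens in the classical ring $\K_T(X)[q]$. The formula is then visibly just the classical duality $\langle\cO_\mu,\mathcal{I}^{\mu'}\rangle=\delta_{\mu,\mu'}$ followed by Atiyah--Bott localization, with no further use of the quantum pairing.

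The point you flag as the main obstacle is fixed by the paper's conventions, and your version of it has two slips.
\begin{itemize}
\item The fixed point $\bfe_\eta=\langle f_j : j\in I_\eta\rangle$ is spanned by the coordinate lines indexed by the $0$-positions of $J_\eta$, not the $1$-positions. There are $k$ of them, as there must be since $\cS$ has rank $k$.
\item Since $\lambda_y(\C^n)=\prod_i(1+y\ve_i)$, one gets $\lambda_y(\cS)|_{\bfe_\eta}=\prod_{j\in I_\eta}(1+y\ve_j)$, not $\prod(1-y\ve_j^{\pm1})$.
\end{itemize}
Comparing with $\prod_i(1-yz_i)$ gives exactly $z=-\ve_\eta$. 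This is the same identification used in the proof of \Cref{cor:Amu=I}, so no consistency check against examples is needed. With this correction your proof is complete.
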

We remark that $\Eu_\eta$, the K-theoretic Euler class of $X$ at the fixed point $\bfe_\eta$, the localization coefficient 
$\langle \cO_\mu, \bfe_\eta \rangle$, and the specialized partition function, 
are combinatorially explicit (see \Cref{classEu} respectively \Cref{E:locOlambda}). Each of these terms is appropriately positive for a choice of K-theoretic positive roots $1- \ve_i/\ve_j$, but these choices are 
{\em different}, thus the total formula is not positive. Nevertheless, 
this gives a new combinatorial expression for the quantum K structure constants in terms of 
%the graphical calculus of the lattice models which uses 
non-intersecting paths on the cylinder.

We now turn to the cylindric Grothendieck analogue of \cref{E:skewSchur}. 
The special case $\lambda=\emptyset$ of the partition function suggests that 
the polynomial under consideration must be closely related to the (skew) factorial Grothendieck 
polynomials studied by McNamara \cite{mcnamara:factorial}. However, since for $\lambda=\emptyset$ 
the partition function represents classes of {\em ideal sheaves} $\mathcal{I}^\nu$, 
it turns out the most relevant 
polynomials for us are the (single) {\em modified skew Grothendieck polynomials} $G_{\lambda//\mu}$ defined more recently by Yeliussizov \cite{yeliussizov2019symmetric}. These are defined using Buch's stable Grothendieck polynomials \cite{buch:groth-classes}, and are specializations of Grassmannian double Grothendieck polynomials defined by Lascoux and Sch{\"u}tzenberger \cite{lascoux1982structurehopf}.

We define a double, cylindric, generalization of both McNamara's and Yeliussizov's polynomials, using certain cylindric set-valued tableaux. We then relate these to the partition function
$Z_\lambda^\nu(z|\ve;q)$. We achieve this by establishing an explicit map between Buch's set-valued tableaux \cite{Buch2002} when arranged on a cylinder and five-vertex lattice configurations from \cite{gorbounov2017quantum,gorbounov.korff.mihalcea:YB}.

More precisely, let $\hat{\lambda}[0]$
denote the cylindric shape obtained from $\lambda$ by translating it 
with the $k \times (n-k)$ rectangle, and let $\nu[d]$ denote the cylindric
shape obtained by moving $\nu$ with vector $(d,d)$ in SE direction;
see \Cref{fig:cyl_loop} below and \Cref{sec:cylindric-groth} for precise notation. For $\theta_1,\theta_2$ cylindric loops, we define the {\em factorial modified cylindric skew Grothendieck polynomial} 
as \begin{equation}\label{mcsG-intro}
    G_{\theta_1//\theta_2}(x|t)=\sum_{\substack{\rho\subset\theta_2\\
    \text{cyl. rook strip}}}\beta^{|\theta_2/\rho|}G_{\theta_1/\rho}(x|t),
\end{equation}
where the sum ranges over all cylindric loops $\rho$ such that $\theta_2/\rho$ is a {\em cylindric rook strip}, as defined in \Cref{sec:cylindric-groth}. Here $\beta$ is a homogenizing parameter,
and $G_{\theta_1/\rho}(x|t)$ is a cylindric generalization of McNamara's skew factorial Grothendieck polynomials, defined using a cylindric generalization of Buch's set valued tableaux.
Our main result is:
\begin{thm}[cf. \Cref{prop:Z=mcsG}]\label{thm:part-to-groth} Let $\lambda, \nu$ be two partitions in the $k \times (n-k)$ rectangle. Set $\hat\lambda=(\lambda_1+n-k,\ldots,\lambda_k+n-k)$, then:
\begin{equation}\label{E:expansionZ-intro}
Z^\nu_\lambda(x_1,\ldots,x_k|t;q)=\sum_{d=0}^kq^dG_{\hat\lambda[0]//\nu[d]}(x_1,\ldots,x_k|\tau^kt),
    \end{equation}
    where $x_i=1+z_i$, $t_j=1-\ve^{-1}_j$, and $\tau^k t$ is a $k$-shift of the sequence $t=(t_1, \ldots, t_n)$.
\end{thm}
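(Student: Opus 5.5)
The identity \cref{E:expansionZ-intro} is combinatorial, so I will prove it by a weight-preserving bijection. On the left, $Z_\lambda^\nu$ is the sum over cylindrical five-vertex configurations $\gamma$ on a $k\times n$ lattice, with boundary words $J_\lambda$ (top) and $J_\nu$ (bottom). On the right, each $G_{\hat\lambda[0]/\rho}$ is a sum over cylindric set-valued tableaux, and these are then summed over cylindric rook strips $\theta_2/\rho$ with $\theta_2=\nu[d]$.

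First I fix the row-by-row structure. Because $Z$ is a matrix element of a product of transfer matrices (one for each $z_i$), a configuration is a sequence of $01$ words $J_\lambda=J^{(0)},J^{(1)},\dots,J^{(k)}=J_\nu$. Each step $J^{(i-1)}\to J^{(i)}$ is one row, with spectral parameter $z_i$.

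Second I read each row combinatorially. The $1$-letters in the row define positions of lattice paths, and these positions determine a cylindric loop, i.e.\ a boundary of a cylindric shape. The sequence of loops obtained is
\[
\hat\lambda[0]=\theta^{(0)}\supset\cdots\supset\theta^{(k)},
\]
with the translate by the $k\times(n-k)$ rectangle accounting for $\hat\lambda$. The total winding of paths across the seam determines $d$ and the final loop $\nu[d]$, and each crossing contributes one factor of $q$. The difference between consecutive loops should be a cylindric horizontal strip, filled with entry $i$, as in Buch's row-by-row description of set-valued tableaux. The extra "set-valued" entries, i.e.\ boxes carrying several labels, come from the vertices whose K-theoretic weight carries a factor $\beta$ (equivalently $x_i-1=z_i$).

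Third I handle the final row. Vertices there that absorb a path without producing a box produce the modification. Precisely, a rook strip $\nu[d]/\rho$ of boxes, none sharing a row or column on the cylinder, each weighted by $\beta$, matches Yeliussizov's modification. This reflects that $Z_\emptyset^\nu$ represents an ideal sheaf $\mathcal I^\nu$ rather than a structure sheaf.

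Fourth I check the weights vertex by vertex. This means substituting $x_i=1+z_i$ and $t_j=1-\ve_j^{-1}$ in the Boltzmann weights of \cite{gorbounov.korff.mihalcea:YB}. I must verify that each vertex weight, such as $1+z_i/\ve_j$, equals the factorial box weight $x_i\oplus t_{j'}$ (or its inverse variant) at the correct content shift. This shift should produce $\tau^k t$, because tableau contents are measured from the translated shape $\hat\lambda$. As a sanity check I will use \Cref{ex:Gr24-21}, and also $\lambda=\emptyset$, $q=0$, where the identity should reduce to known formulas for McNamara and Yeliussizov polynomials.

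The main obstacle is the cylindric, set-valued bookkeeping. I need the map from rows to strips to be a bijection, including near the seam and at multiply-labelled boxes, and I need the rook-strip condition to come out exactly right on the cylinder. If the direct bijection gets messy, I would first try to prove it one row at a time: establish the one-row identity (transfer matrix equals a one-variable cylindric branching operator), then iterate it, and isolate the last row to produce the rook strip.
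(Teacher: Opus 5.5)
Your architecture matches the paper's proof. The one-row identity you keep as a fallback is exactly \Cref{lem:transfer_matrix}: a transfer matrix sends $v_\mu$ to a sum over cylindric horizontal strips $\mu^{(1)}[0]/\nu[d']$, decorated by cylindric rook strips $\nu[d']/\rho$. The theorem then follows by induction on the number of rows. \Cref{lem:cylSVT} identifies the iterated strips with cylindric set-valued tableaux, and the rook strip of the last-applied row becomes Yeliussizov's modification.

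However, the step you lead with fails as stated. There is no weight-preserving bijection between lattice configurations and pairs $(\rho,T)$, and your vertex-by-vertex check would break down at type (II) vertices. In the $\beta$-normalised model their weight is $1+\beta\,(x_i\oplus t_j)$, which equals $-z_i/\ve_j$ after $x_i=1+z_i$, $t_j=1-\ve_j^{-1}$, $\beta=-1$. This is neither a box weight nor an ``inverse variant'' of one, so a single configuration carries a sum of many tableau weights. In \Cref{ex:Gr24SVT} the one quantum configuration accounts for sixteen pairs $(\rho,T)$, spread over four rook strips.

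The missing idea is to expand every type (II) vertex into its two outcomes: weight $1$ (no box) or weight $\beta\,(x_i\oplus t_j)$ (one box). Such a vertex only occurs where $J_\nu$ has a $0$ preceded by a $1$, i.e.\ at a removable corner of $\nu[d']$. Hence the boxes selected in one row form a cylindric rook strip $\nu[d']/\rho$, with $\mu^{(1)}[0]/\rho$ still a horizontal strip. For intermediate rows these are precisely the doubly-labelled boxes $\tau^{(i)}/\rho^{(i)}$ of \Cref{lem:cylSVT}. For the last row they produce the factor $\beta^{|\nu[d]/\rho|}$ in \eqref{mcsG}. So your bijection must be between configurations \emph{together with a marked subset of their type (II) vertices} and pairs $(\rho,T)$.

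Two bookkeeping points also need fixing. First, the first-applied row (adjacent to $\hat\lambda[0]$) yields the outermost strip, so it must carry the largest label rather than label $i$. The paper handles this by commutativity of the transfer matrices, applying $t(x_1)$ last and relabelling. Second, you must track the per-row index shift of the loops, $\tilde\tau^{(i)}_j=\tau^{(i)}_{j-1}$. This is what lets the relabelling $x_i\mapsto x_{i+1}$ keep the content offset fixed at $\tau^k t$.
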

This theorem is a K-theoretic generalization of Postnikov's result from \cite{postnikov:affine} up to a certain duality: the polynomial $G_{\hat\lambda[0]//\nu[d]}$ specialised to $\beta=0$ and $t_j=0$ yields the toric Schur 
function $s_{\hat\lambda[0]/\nu[d]}$. This has degree $k(n-k)+|\lambda|-|\nu|-nd$ and it expands as $s_{\hat\lambda[0]/\nu[d]}=\sum_{\mu}c_{\lambda\mu}^{\nu,d}s_{\mu^\vee}$ with $\mu^\vee$ denoting the complement of the Young diagram in the $k\times(n-k)$ bounding box. In contrast, Postnikov uses $s_{\nu/d/\lambda}$ which has degree $|\nu|-|\lambda|+dn$ and the expansion in \cref{E:toricSchur}.  That both toric Schur functions lead to the same expansion coefficient is due to \cite[Cor.6.9]{postnikov:affine}; cf. \Cref{cor:toricSchur} and \Cref{rmk:toricSchur}. 

We also note that if one combines \Cref{thm:part-to-groth} with the geometric interpretation of the partition function from 
\Cref{cor:part-as-class-intro} one obtains polynomials representing the product $\mathcal{I}^\nu \cdot \cO_\lambda$. Using M{\"o}bius inversion relating ideal and structure sheaves, 
this gives a polynomial representing the class $\cO^\nu \cdot \cO_\lambda$, which is the same as the
equivariant K-theory class of the Richardson variety $X_\lambda \cap X^\nu$ \cite{brion:flagv}. These polynomial representatives seem new, and generalize the well-known fact that (straight, double) Grothendieck polynomials represent Schubert classes in equivariant K-theory \cite{FultonLascoux1994,buch:groth-classes,Buch2002,lenart2006grothendieck}.
\smallskip

{\em Acknowledgments.} LM and CK were supported in part by a Royal Society International Exchanges grant which allowed us to travel to each other's institutions.
LM was partially supported by NSF grant DMS-2152294, and gratefully acknowledges the support
of Charles Simonyi Endowment, which provided funding for the membership at the Institute
of Advanced Study during the 2024-25 Special Year in ‘Algebraic and Geometric Combinatorics’, where this collaboration was initiated.
\medskip

{\em Tool and computational resource disclosure.} The authors used ChatGPT 5.6 and ChatGPT 6.0 Pro to perform experiments, improve Mathematica code for computing examples and partition functions of the lattice models, audit the paper, and search for literature related to Grothendieck polynomials; Yeliussizov's paper
\cite{yeliussizov2019symmetric} and its relevance to our project were identified this way.

\section{Preliminaries}\label{sec:schub-preliminaries}
In this section we set the basic conventions used in this paper, following those from \cite{gorbounov.korff.mihalcea:YB}.
\subsection{Notation and conventions}\label{sec:conventions} 
We work over $\C$, and by a variety we mean a scheme of finite type which is 
reduced and irreducible. For a fixed natural number $n$ denote by $f_1, \ldots, f_n$ 
the standard basis of $\C^n$. Let $T \simeq (\C^*)^n$ be the $n$-dimensional torus, 
acting on $\C^n$ as usual: 
\[(a_1, \ldots, a_n).(z_1, \ldots, z_n) = (a_1z_1, \ldots, a_n z_n) \/. \]
The $T$-module $\C^n$ has a weight space decomposition given by 
$\C^n = \bigoplus_{i=1}^n \C_{\ve_i}$, where $\C_{\ve_i}$ denotes 
the one dimensional $T$-module with action given by the character 
$\ve_i(z_1, \ldots , z_n)= z_i$. Denote 
the representation ring of $T$ by $\Kpt$, and as customary we identify characters by 
their $1$-dimensional 
modules: $\ve_i = [\C_{\ve_i}]$. Then $\Kpt= \Z[\ve_1^{\pm 1}, \ldots , \ve_n^{\pm 1}]$, 
the Laurent polynomial ring in the indeterminates $\ve_i$.
If $y$ is an indeterminate, the $\lambda_y$ class of 
$\C^n$ is the element in $\Kpt[y]$ defined by 
\[ \lambda_y(\C^n) = (1+y \ve_1)\cdot \ldots \cdot (1+ y\ve_n) \/.\]

Let $W:=S_n$ be the symmetric group in $n$ letters, equipped with 
length function $\ell:W \to \mathbb{N}$. 
Denote by $w_0$ its longest element. For $1 \le k \le n$ we denote by 
$W^k_n$ the set of {\bf Grassmannian permutations}, i.e., permutations such
that $w(1) < \ldots < w(k)$ and $w(k+1) < \ldots < w(n)$. 
These are in bijection with $\Pi^k_n$, the set of partitions 
$\lambda=(\lambda_1, \ldots, \lambda_k)$ included in 
the $k \times (n-k)$ rectangle, i.e.,
$n-k \geq \lambda_1 \geq \cdots \ge \lambda_k \geq 0$.
The weight $|\lambda|$ is $\lambda_1 + \ldots + \lambda_k$, and it is equal to the length of the corresponding permutation $w_\lambda$.
A third set in bijection with the two above is
$\mb{J}_n^k$ 
the set of $01$ words $j_1 j_2 \ldots j_n$
of length $n$ with $k$ $0$'s. We will use the following bijections among the sets $W^k_n, \Pi_n^k$ and $\mb{J}_n^k$.
The permutation $w_\lambda \in W^k_n$ associated to the partition $\lambda \subset k \times(n-k)$ is the unique Grassmannian permutation satisfying
\[ w(i)= \lambda_{k-i+1}+i \/; \quad 1 \le i \le k \/. \]
The $01$ word $J_\lambda = j_1 j_2 \ldots j_n$ is the word with $0$'s in positions 
$\lambda_i+k-i+1$, for $1 \le i \le k$.
Graphically, $J_\lambda$ is obtained by tracing the outline of the Young 
diagram of $\lambda$, starting from the SW corner, and placing $0$'s for 
the vertical 
steps, and $1$'s for the horizontal steps. The permutation $w_\lambda$ 
starts with the $k$ positions given by the $0$'s; see \Cref{fig:Young}.
\begin{figure}[h!]
\centering
\includegraphics[width=.2\textwidth]{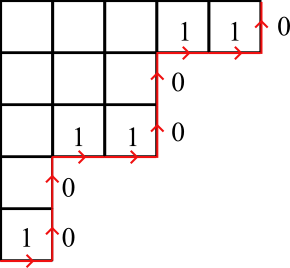} 
\caption{A Young diagram of $\lambda=(5,3,3,1,1)$ and its associated 01-word $J_\lambda=1001100110$ for $\Gr(5;10)$. The corresponding Grassmannian permutation is $w_\lambda=[2,3,6,7,10,1,4,5,8,9]$.}\label{fig:Young}
\end{figure}
Given a partition $\lambda$ we denote by 
\[I_\lambda = \{\lambda_i+k-i+1: 1 \le i \le k\} \] 
the set containing the positions of $0$'s of $J_\lambda$.
We also note that the symmetric group $S_n$ acts on 
the set of Young diagrams in the $k \times (n-k)$ rectangle by permuting the positions of $0$'s and $1$'s in $J_\lambda$.

\begin{example}\label{ex:Ydiagram}\rm With these conventions
$J_\emptyset =00\cdots01\cdots1$ and $J_{(n-k)^k}=11\cdots10\cdots0$ (with $k$ $0$'s). 
The corresponding permutations are: 
\[ w_{\emptyset} = id \/; \quad w_{(n-k)^k} = \begin{pmatrix} 1 & 2 & \ldots & k & k+1 & \ldots & n \\ n- k +1 & n-k+2 & \ldots & n & 1 & \ldots & n-k \end{pmatrix} \/. \]
For another example, let $\lambda = (3,3)$, regarded in the $3 \times (7-3)$ rectangle. Then
\[ J_\lambda = 0 1 1 1 0 0 1 \quad \textrm{ and } \quad w_\lambda = \begin{pmatrix} 1 & 2 & 3 & 4 & 5 & 6 & 7 \\ 1 & 5 & 6 & 2 & 3 & 4 & 7 \end{pmatrix} \/. \]
The set of zero positions is $I(\lambda) = \{ 1, 5,6 \}$. If $s_4=(45)$ is a simple transposition, then $J_{s_4.w_\lambda}= 0110101$, therefore
$s_4.w_{(3,3)} = w_{(3,2,0)}$.\end{example}
%%%%%%%%%%%%%%%%%%%%%%%%%%%%%%%%%%%%%%%%%%%%%%%%%%%%%%%%%%%%%%%%%%%%%
We will use these different combinatorial descriptions interchangeably, noting that the description in terms of partitions and Grassmannian permutations is convenient in the geometric setting while the labelling in terms of binary strings occurs in the discussion of the quantum integrable lattice model and its graphical calculus.

\subsection{Schubert varieties} Let $\Gr(k,n)$ denote the Grassmannian 
parametrizing $k$-dimensional subspaces in $\C^n$. This is a complex projective manifold of 
dimension $k(n-k)$, and it is homogeneous under the action of $\GL_n(\C)$.
Fix the (opposite) standard flag 
\[ F_\bullet: F_1 = \langle f_n \rangle \subset F_2 = \langle f_n, f_{n-1} \rangle \subset \ldots \subset \C^n \/.\]
Let $\lambda=(\lambda_1, \ldots, \lambda_k)$ be a partition included in the $k \times (n-k)$ rectangle. The {\bf Schubert cell}
associated to $\lambda$ and $F_\bullet$ is defined as follows.
Set $p_i=n-k+i-\lambda_i$ and 
\[ X_\lambda^\circ(F_\bullet) = \{ V \in \Gr(k,n): \dim V \cap F_{j} = \# \{i: p_i \le j \}, \textrm{ for } 0 \le j \le n \/. \} \/. \]
This is isomorphic to the affine space $\C^{k(n-k) - |\lambda|}$. The corresponding
{\bf Schubert variety}, denoted by $X_\lambda:=X_\lambda(F_\bullet)$ 
is the (Zariski) closure 
of $X_\lambda^\circ(F_\bullet)$. Similarly, using the opposite
flag $F_\bullet^{opp}$ given by $F_i^{opp} = \langle f_1, \ldots , f_i\rangle$ one defines the opposite Schubert cell 
$X^{\lambda, \circ} \simeq \C^{|\lambda|}$. 
Both types of Schubert cells are $\T$-stable and 
contain the unique $\T$-fixed point
\[\bfe_\lambda= \langle f_{k+1-i+\lambda_i}: i=1 \ldots k \rangle \/. \] 
Finally, both types of Schubert cells give a stratification of the Grassmannian: 
\[ 
\Gr(k,n) = \bigsqcup_\lambda X_\lambda^\circ(F_\bullet)=\bigsqcup_\lambda X^{\lambda,\circ}(F_\bullet^{opp}) \/.
\]
Note that $w_0.X_\lambda= X^{\lambda^\vee}$ where $\lambda^\vee$ is the complement of $\lambda$ in the $k \times (n-k)$ rectangle.
Inclusion of Schubert varieties is given by partition inclusion:
\[ \lambda \subset \mu \quad \Leftrightarrow \quad X_\mu \subset X_\lambda 
\quad \Leftrightarrow \quad X^{\lambda} \subset X^\mu \/. \]

%%%%%%%%%%%%%%%%%%%%%%%%%%%%%%%%%%%%%%%%%%%%
\section{Equivariant quantum K-theory}
\subsection{Preliminaries on equivariant K-theory}\label{sec:preliminaries} In this section we recall some basic facts about the equivariant 
K-theory of a variety with a group action. For an introduction to equivariant K-theory, and more details, see \cite{chriss2009representation}. 

Let $X$ be a smooth projective variety with an 
action of a linear algebraic group $G$. The equivariant 
K-theory ring $\K_G(X)$ is the Grothendieck ring 
generated by symbols $[E]$, where $E \to X$ is a 
$G$-equivariant vector bundle, modulo the relations 
$[E]=[E_1]+[E_2]$ for any short exact sequence 
$0 \to E_1 \to E \to E_2 \to 0$ of equivariant vector bundles. 
The additive ring structure is given by direct sum, and the 
multiplication is given by tensor products of vector bundles. 
Since $X$ is smooth, any $G$-linearized coherent sheaf has 
a finite resolution by (equivariant) vector bundles, and the ring 
$\K_G(X)$ coincides with the Grothendieck group of 
$G$-linearized coherent sheaves on $X$. In particular, any 
$G$-linearized coherent sheaf $\mathcal{F}$ on $X$ 
determines a class $[\mathcal{F}] \in \K_G(X)$. 
An important special case is if $\Omega \subset X$ is a 
$G$-stable subscheme; then its structure sheaf $\cO_\Omega$
determines a class $[\cO_\Omega] \in \K_G(X)$.
The ring $\K_G(X)$ is an algebra over $\K_G(\pt) = \Rep(G)$, the representation ring of 
$G$. If $G=T$ is a complex torus, then this is the Laurent 
polynomial ring 
$\K_T(\pt) = \Z[\ve_1^{\pm 1}, \ldots, \ve_n^{\pm 1}]$ from the previous section.

The (Hirzebruch) $\lambda_y$ class of a vector bundle $E$ is defined by 
\[ \lambda_y(E) := 1 + y [E] + y^2 [\wedge^2 E] + \ldots + y^e  [\wedge^e E] \in \K_G(X)[y] \/. \] 
This class, introduced by Hirzebruch \cite{hirzebruch:topological} in relation to the Grothendieck-Riemann-Roch theorem, may be thought of 
as the K-theoretic analogue of the (cohomological) Chern polynomial 
\[ c_y(E)= 1+ c_1(E) y + \ldots + c_e(E) y^e\] of the bundle $E$. 
The $\lambda_y$ class is multiplicative with respect to short exact sequences, i.e., if 
$0 \to E_1 \to E_2 \to E_3 \to 0$
is such a sequence of vector bundles then 
\[ \lambda_y(E_2) = \lambda_y(E_1) \cdot \lambda_y(E_3) \/; \] 
cf.~\cite{hirzebruch:topological}. 
Since $X$ is proper, the push-forward to a point equals the Euler 
characteristic, or, equivalently, the character of a virtual representation: 
\[\chi(X, \mathcal{F})= \int_X [\mathcal{F}] := \sum_i (-1)^i \ch H^i(X, \mathcal{F}) \/. \] 
(We omit $T$ from the notation.) This defines 
the pairing (with $E,F$ equivariant vector bundles):
\begin{equation}\label{E:pairing} \langle - , - \rangle :\K_T(X) \otimes \K_T(X) \to \K_T(pt); \quad
\langle [E], [F] \rangle := \int_X E \otimes F = \chi(X, E \otimes F) \/.  
\end{equation}

\subsection{Equivariant K-theory of Grassmannians} Fix the Grassmann manifold $\Gr(k,n)$. The Schubert varieties 
$X_\lambda, X^\lambda$ determine classes 
$\cO_\lambda:= [\cO_{X_\lambda}]$ and $\cO^\lambda := [\cO_{X^\lambda}]$
in $\K_{T}(\Gr(k,n))$. The $T$-fixed points $\bfe_\lambda$ give classes which by abuse of notation we still denote by 
$\bfe_\lambda:= [\mathcal{O}_{\bfe_\lambda}]$. The equivariant K-theory 
$\K_{T}(\Gr(k,n))$ is a free module over 
$\K_{\T}(pt)$, with bases given by Schubert classes $\{ \cO_\lambda \}, \{ \cO^\lambda \}$ as $\lambda$ varies in the $k \times (n-k)$ rectangle. 

The (left) action of $\GL_n$ on $Gr(k,n)$ induces an action of the Weyl group $W$ 
\[ \bs{w}: \K_{T}(\Gr(k,n)) \to  \K_{T}(\Gr(k,n)) \/, \quad \kappa  \mapsto \bs{w}.\kappa \/. \]
From definition, $\bs{w_0}.\cO_\lambda = \cO^{\lambda^\vee}$.
This was studied in \cite{knutson:noncomplex}
(in equivariant cohomology), and \cite{harada.landweber.sjamaar:divided,MNS:left}
in K-theory. We refer to this as the {\bf left Weyl group action}. We recall next some basic facts,
 following \cite[\S 5]{MNS:left}. The action $\bs{w}$ is a ring endomorphism of
 $\K_{T}(\Gr(k,n))$ which twists the ground ring $\K_{T}(\pt)$, 
 i.e., for 
$\kappa, \zeta \in  \K_{\T}(\Gr(k,n))$ and $\chi \in \K_{T}(\pt)$
 \[ \bs{w}.(\chi \otimes \kappa \otimes \zeta) = w(\chi) \otimes \bs{w}(\kappa) \otimes \bs{w}(\zeta)\/. \]
The action $\bs{w}$ fixes classes in 
$\K_{\GL_n}(\Gr(k,n))$, i.e.
for $\zeta  \in \K_{\GL_n}(\Gr(k,n))$ and $\kappa  \in \K_{T}(\Gr(k,n))$,
$\bs{w}.(\zeta) = \zeta$. 
For us, the most important examples of classes in $\K_{\GL_n}(\Gr(k,n))$
are the classes of homogeneous vector bundles on $\Gr(k,n)$, 
especially the $\lambda_y$ class of tautological subbundle $\cS$ 
and the (dual of the) quotient bundle $\cQ^\vee$ on $\Gr(k,n)$.

For a fixed point class $\bfe_\lambda \in \K_{T}(\Gr(k,n))$, 
\begin{equation}\label{E:left-fixed} \bs{s}_i.\bfe_\lambda = \bfe_{s_iw_\lambda} \/, \end{equation}
implying also that the pairing from \cref{E:pairing} is invariant under the action of $W$.
In all cases above, by $s_i w_\lambda $ we meant the action of $s_i$ on 
the Young diagram $\lambda$. In practice, this means that if $w_\lambda$ 
is not a Grassmannian permutation then it is replaced by the unique Grassmannian element 
$w_\mu$ which satisfies $s_i \lambda = w_\mu$. 
\begin{footnote}{We warn the reader that the conventions on Schubert classes in this paper are opposite from those in \cite{MNS:left}, and
the operator denoted here by $\bs{\delta}_i$ is denoted by 
$\delta_i^\vee$ in {\em loc.~cit.}}
\end{footnote}

For further use, we record the following expressions for the pairing in the case of the fixed point 
classes in $\K_{T}(\Gr(k,n))$. Recall that $\bfe_\lambda$ is 
the fixed point corresponding to $w_\lambda. \langle f_{1}, \ldots, f_k \rangle$.
Then $\langle \bfe_\lambda, \bfe_\mu \rangle= \mathrm{Eu}_\lambda \delta_{\lambda, \mu}$, where $\Eu_\lambda$ is called the {\bf Euler class at $\bfe_\lambda$}. It may be calculated by
\begin{equation}\label{classEu} \Eu_\lambda = \lambda_{-1}T_{\bfe_\lambda}^*(\Gr(k,n)) = w_\lambda. \prod_{1 \le i \le k < j \le n} (1- \ve_i/\ve_j) \/, \end{equation}
with $T_{\bfe_\emptyset}(\Gr(k,n))$ denoting the tangent space at 
$\bfe_\emptyset$.

Finally, we record some properties we need for the 
intersection pairing.
If $\cO_\lambda, \cO^\mu$ are opposite Schubert classes then 
\begin{equation}\label{E:classical-pairing} \langle \cO_\lambda, \cO^\mu \rangle= \chi(\Gr(k,n),[\cO_{X_\lambda \cap X^\mu}]) = \begin{cases} 1 & \lambda \subset \mu \\ 0 & \textrm{ otherwise} \end{cases}\end{equation}
(This follows because Richardson varieties $X_\lambda \cap X^\mu$ are rational, with rational singularities; \cite{brion:flagv}.) The dual of Schubert classes are the classes of {\bf ideal sheaves} $\mathcal{I}^\mu = [\cO_{X^\mu}(-\partial X^\mu)]$ (see \cite{Buch2002, brion:flagv}): 
\begin{equation}\label{E:ideal-sheaves} \langle \cO_\lambda, \mathcal{I}^\mu \rangle= \delta_{\lambda,\mu} \/.\end{equation}

\subsection{Quantum K-theory}\label{sec:QK}
Let $\K_T(\pt)[\![q]\!]$ be the formal power series 
ring in the variable $q$ with coefficients in $\K_T(\pt)$.
The (small, equivariant) quantum K-ring, defined by Givental and Lee 
\cite{givental:onwdvv, lee:QK},
is a deformation of the usual K-theory ring:
\[ \QK_T(\Gr(k,n)):= \K_{T}(\Gr(k,n)) \otimes_{\K_T(\pt)} \K_T(\pt)[\![q]\!] \/. \]
It is a free $\K_T(\pt)[\![q]\!]$-module with a basis given by 
Schubert classes: 
\[\QK_T(\Gr(k,n))=\bigoplus_{\lambda \subset (n-k)^k} \K_T(\pt)[\![q]\!]\cO_\lambda = \bigoplus_{\lambda \subset (n-k)^k} \K_T(\pt)[\![q]\!]\cO^\lambda \/.\]
Consider now the quantum K-pairing (QK) 
\[ (\!(\cdot, \cdot)\!):\K_T(\Gr(k,n)) \otimes \K_T(\Gr(k,n)) \to 
\K_T(\pt)[\![q]\!] \] 
For any $a, b \in \K_{T}(\Gr(k,n))$,
$(\!(a,b)\!)=\sum_{d \ge 0} \langle a, b \rangle_d q^d$ is the 
generating series of the $2$-point, genus $0$ K-theoretic
Gromov-Witten invariants (KGW) with insertions $a,b$.
The quantum K-multiplication is defined as the unique
product $\star$ satisfying 
\begin{equation}\label{E:qKmetric} (\!(a \star b,c)\!) 
= \sum_{d \ge 0} \langle a,b,c\rangle_d q^d \qquad \in \K_{T}(\pt)[\![q]\!]\/,\end{equation} 
for any $a,b,c \in \K_{T}(\Gr(k,n))$. Here 
$\langle a,b,c\rangle_d$ are the $3$-point, genus $0$ KGW invariants associated to the classes $a,b,c$; 
see \cite{buch.m:qk}. The Weyl group action extends to the equivariant 
quantum K-ring,
leaving the quantum parameter invariant; cf. \cite{MNS:left}. 
Then the QK pairing is $W$-invariant, in the sense 
that for any $w \in W$, and any $a,b \in \QK_{\T}(\Gr(k,n))$,
\[ (\!(\bs{w}.a, \bs{w}.b)\!)= w(\!(a, b)\!) \/.\]
Furthermore, it was proved in \cite{BCLM:euler} that 
the QK pairing is the unique $\K_T(\pt)[\![q]\!]$-bilinear 
map determined by 
\begin{equation}\label{E:QKpairing} (\!(\cO_\lambda, \cO^\mu)\!)= \frac{q^{\mathrm{dist}(\lambda,\mu)}}{1-q}\end{equation}
where $\mathrm{dist}(\lambda,\mu) \in \Z_{\ge 0}$ is 
the minimal degree of a rational curve
joining the opposite Schubert varieties $X_\lambda$ and $X^\mu$.
This degree, which is the same as the smallest degree of $q$ in the 
quantum {\em cohomology} multiplication of the fundamental classes
$[X_\lambda]$ and $[X^\mu]$ has been calculated by 
Postnikov \cite{postnikov:affine} and Fulton-Woodward 
\cite{fulton.woodward:quantum}; see also 
\cite{BCMP:qkfin,buch.m:nbhds} for interpretations in terms
of curve neighborhoods. Explicitly, 
\[\mathrm{dist}(\lambda,\mu)= \min \{d \in \Z_{\ge 0}: \lambda[-d] \subset \mu \}\]
where $\lambda[-d]$ is the partition obtained from $\lambda$ by removing 
its top $d$ rows and leftmost $d$ columns.  
The distance is $0$ if 
$\lambda \subset \mu$, and $\ge 1$ otherwise, showing
that for $q=0$ one recovers the intersection 
pairing from \cref{E:classical-pairing}. 
It follows that the QK pairing is non-degenerate,
hence we may define elements $\mathcal{I}^\lambda_q$ giving a dual basis 
of the Schubert basis:
\begin{equation}\label{E:qideals} (\!( \cO_\lambda, \mathcal{I}^\mu_q)\!) = \delta_{\lambda,\mu} \end{equation}
We refer to these elements as {\bf quantum 
ideal sheaves}. Formulae for $\mathcal{I}^\lambda_q$ were proved in \cite{summers:qideal} (see precursors in \cite{buch.m:qk,BCMP:qkchev}). 
From the definition 
involving KGW invariants it follows that the QK pairing also 
satisfies the Frobenius
property $(\!(a \star b, c)\!) =(\!(a, b \star c)\!)$, and it turns
the QK ring into a Frobenius algebra. 

Recall that any Frobenius algebra $A$ has also naturally the structure of a co-algebra; see e.g. \cite{kock2004frobenius}. Namely, the non-degenerate pairing gives a natural isomorphism with its dual $A\simeq A^*$ and, thus, the multiplication map $m:A\otimes A\to A$ induces a co-product $\Delta:A^*\to A^*\otimes A^*$ which can be pulled back to $A$. In what follows we will make use of both structures, the multiplication and the co-multiplication on $A=QK_T(\Gr(k,n))$. 

Consider the QK structure constants $C_{\lambda,\mu}^{\nu,d} \in \K_T(\pt)$ which can be defined in two dual but equivalent ways:
\[ \cO_\lambda \star \cO_\mu = \sum_{d \ge 0} \sum_{\nu}C_{\lambda,\mu}^{\nu,d} q^d \cO_\nu \qquad\text{and}\qquad
\Delta(\cO_\lambda)= \sum C_{\lambda,\mu}^{\nu,d} \cO_\nu \otimes \mathcal{I}^\mu_q \/. \]
It was proved in \cite{BCMP:qkfin} that the QK multiplication is finite,
i.e., the submodule 
$\K_{T}(\Gr(k,n)) \otimes \K_T[q]$ is stable 
under the QK product $\star$. (Similar statements hold for any flag manifold \cite{ACTI:finiteness,kato:loop}.) More recently, and in the 
non-equivariant case, it was shown in \cite{BCMP:qkpos} 
that $C_{\lambda,\mu}^{\nu,d}$ have predictable signs:
\[ (-1)^{|\lambda|+|\mu|-|\nu|-nd} C_{\lambda,\mu}^{\nu,d} \ge 0 \/.\]
A central problem is to find a manifestly positive combinatorial 
formula for $C_{\lambda,\mu}^{\nu,d}$. 

We will not use this in this paper, but an important property of the 
quantum K ring $\QK_T(\Gr(k,n))$ is that it is equipped with an action of a degeneration of the Hecke algebra, and it is a cyclic Hecke module, generated by the class of the Schubert point $\cO_{(n-k)^k}$; see \cite{MNS:left,gorbounov.korff.mihalcea:YB}.

\subsection{The Yang-Baxter action}
As a particular instance of the gauge/Bethe 
correspondence \cite{nekrasov.shatashvili:supersymmetric} 
(see also 
\cite{faddeev1990lectures,KPSZ:quivers,okounkov:lectures}) 
we proved in \cite{gorbounov.korff.mihalcea:YB} that
the direct sum 
\begin{equation}\label{E:iso}
\bbV_n:=\bigoplus_{0 \le k \le n} \QK_T(\Gr(k,n)) \simeq (\C^2)^{\otimes n} \otimes \K_T(\pt)[\![q]\!] 
\end{equation}
(module isomorphism) admits an action of a Yang-Baxter algebra $\YB$,
generated by the entries of a {\bf monodromy matrix}
\[ \mathbf{T} = \begin{pmatrix} t_{00} & t_{01} \\ t_{10} & t_{11} \end{pmatrix}\]
which satisfies the $\mathbf{R}\mathbf{T}\mathbf{T}=\mathbf{T}\mathbf{T}\mathbf{R}$
relations, with the R-matrix 
\[ \mathbf{R}(z) = \begin{pmatrix} 1 & 0 &0 &0\\
    0& 1-z & z&0\\
    0& 1 & 0&0\\
    0&0&0&1
\end{pmatrix}\]
The module $\bbV_n$ is a highest weight module for $\YB$, and  
the off diagonal matrix entries $t_{ij}$ are raising/lowering operators
\[ t_{10}: \QK_T(\Gr(k,n)) \to \QK_T(\Gr(k+1,n))\/; \quad t_{01}: \QK_T(Gr(k+1,n)) \to \QK_T(Gr(k,n)) \/.\]
Each $\QK_T(\Gr(k,n))$ is a weight space, which is preserved by the diagonal entries:
\[ t_{00},t_{11}: \QK_T(\Gr(k,n)) \to \QK_T(\Gr(k,n)) \/. \]
Explicit formulae for these operators are given in terms of graphical 
calculus of a $5$-vertex integrable lattice model; see \cref{sec:graphical-calculus}. 
The key property for this paper is the identification
\begin{equation}\label{transfer_matrix}
    t_{00}(y) + q t_{11}(y) = \lambda_y(\cQ^\vee) \star
\end{equation}
of the quantum trace of ${\bf T}$ with the QK multiplication operator
by $\lambda_y(\cQ^\vee)$. The quantum trace $t_{00} + q t_{11}$
is also called the (row-to-row) {\bf transfer matrix}.
We will use the transfer matrix to
define and analyze the partition function of the quantum integrable 
system associated to the Yang-Baxter algebra $\YB$.

\subsection{Polynomial representatives}

Schubert classes can be represented as factorial stable Grothendieck polynomials first considered by \cite{mcnamara:factorial} and for which there exists a known determinant formula \cite[Eq. (2.12)]{ikeda2013k}:
\begin{equation}
    G_\lambda(x_1,\ldots,x_k|t)=\frac{\det((x_j|t)^{\lambda_i+k-i}(1+\beta x_j)^{i-1})_{1\le i,j\le k}}{\det(x_j^{k-i})_{1\le i,j\le k}},
\end{equation}
where 
\[
x_j=1+z_j,\quad t_i=1-\ve_i^{-1},\quad(x_j|t)^m=\prod_{i=1}^m(x_j+t_i+\beta x_j t_i)\;.
\]
For most of the paper we will work with the multiplicative variables $z_i,\ve_j$ from \cite[Remark 4.1]{gorbounov.korff.mihalcea:YB} (therefore setting $\beta=-1$) which are more convenient in the geometric setting. In \Cref{sec:cylindric-groth} we will use the other variables $x_i,t_j$ from \cite{gorbounov2017quantum} instead, in order to discuss the definition of Grothendieck polynomials in terms of set-valued tableaux; cf. \cite{Buch2002} and \cite{mcnamara:factorial}.

For a partition $\mu$, set $\ve_\mu=(\ve_{\mu_i+k-i+1})_{1 \le i \le k}$, and consider $\beta=-1$. Then it is well known that the localization coefficient $\langle \cO_\lambda, \bfe_\mu \rangle$ may be calculated by a specialization of the double Grothendieck polynomial (see, e.g., \cite{Oet21}):
\begin{equation}\label{E:locOlambda} 
\langle \cO_\lambda, \bfe_\mu \rangle= G_\lambda(1-\ve_\mu|1-\ve_1^{-1}, \ldots, 1-\ve_n^{-1}) \end{equation}
\section{Graphical calculus and the partition function}\label{sec:graphical-calculus}
In this section we recall the graphical calculus defined by two
of the authors in \cite{gorbounov2017quantum} (see also precursors
in \cite{gorbounov.korff.stroppel,gorbounov.korff:unpublished}), 
and then use it to define the central algebraic 
object in this paper: the partition function associated to 
the transfer operators acting on the quantum K rings of Grassmannians. 
We follow the conventions from \cite{gorbounov.korff.mihalcea:YB}. 

\subsection{The graphical calculus}
We recall next two $5$-vertex models (called the {\em vicious walkers}
and the {\em osculating walkers} in \cite{gorbounov2017quantum})
associated to the Yang-Baxter module $\bbV_n$. We start by 
describing the action of the monodromy matrix operators 
$t_{ij}(z)$. Fix Schubert classes $\cO_\lambda, \cO_\nu$ and let
$J_\lambda=j_1\ldots j_n, J_\nu=j'_1\ldots j'_n$ be the corresponding 
$01$ words. Form the one-row configuration of $n$ vertices 
\begin{center}
\begin{tikzpicture}

\node[anchor=east] at (0.,0.) {$j$};

\node[anchor=south] at (0.25,0.25) {$j_n$};%1st vertex
\node[anchor=north] at (0.25,-0.25) {$j'_n$};

%\node[anchor=south] at (0.75,0.25) {$i_2$};%2nd vertex
%\node[anchor=north] at (0.75,-0.25) {$j_2$};

\node[anchor=south] at (1.75,0.3) {$\cdots$};%4th vertex
\node[anchor=north] at (1.75,-0.3) {$\cdots$};

\node[anchor=south] at (2.75,0.25) {$j_2$};%6th vertex
\node[anchor=north] at (2.75,-0.25) {$j'_2$};

\node[anchor=south] at (3.25,0.25) {$j_1$};%7th vertex
\node[anchor=north] at (3.25,-0.25) {$j'_1$};

\node[anchor=west] at (3.5,0.) {$i$};

\filldraw[black] (0.25, 0) circle (2pt);
\filldraw[black] (0.75, 0) circle (2pt);
\filldraw[black] (1.25, 0) circle (2pt);
\filldraw[black] (1.75, 0) circle (2pt);
\filldraw[black] (2.25, 0) circle (2pt);
\filldraw[black] (2.75, 0) circle (2pt);
\filldraw[black] (3.25, 0) circle (2pt);

\draw[black] (0, 0) -- (3.5, 0);
\draw[black] (0.25, 0.25) -- (0.25, -0.25);
\draw[black] (0.75, 0.25) -- (0.75, -0.25);
\draw[black] (1.25, 0.25) -- (1.25, -0.25);
\draw[black] (1.75, 0.25) -- (1.75, -0.25);
\draw[black] (2.25, 0.25) -- (2.25, -0.25);
\draw[black] (2.75, 0.25) -- (2.75, -0.25);
\draw[black] (3.25, 0.25) -- (3.25, -0.25);
\end{tikzpicture}\;,
\end{center}
For the osculating walkers model, 
each bullet at a single vertex is a placeholder 
for one of the following five possible types of vertices,
\begin{equation}\label{5v}
\text{(I)}\quad\connecting{0}{1}\qquad\text{(II)}\quad
\avoiding{0}{1}\qquad
\text{(III)}\quad\avoiding{1}{0}\qquad
\text{(IV)}\quad\avoiding{0}{0}\qquad
\text{(V)}\quad\avoiding{1}{1}
\;.
\end{equation}
(Note the right to left reading of the $01$ word.)
Each vertex of type (I) in column $i$ (read again right to left) contributes a factor 
$1+z/\ve_{i}$, each vertex of type (II) a 
factor $-z/\ve_{i}$ and (III-V) each contribute 
a factor 1. If there is no possible row configuration consisting 
only of these types of vertices, then the matrix element is zero.
The total weight of this configuration is the product of the weights
of the vertices. Then the coefficient of $\cO_\nu$ in $t_{ij}\cO_\lambda$
is the sum of weights of configurations
\begin{center}
\begin{tikzpicture}
\node[anchor=east] at (0.,0.) {$j$};
%\node[anchor=east] at (-0.5,0.) {$\underset{k=0,1}%{\mathlarger{\sum}}$};

\node[anchor=south] at (0.25,0.25) {$j_n$};%1st vertex
\node[anchor=north] at (0.25,-0.25) {$j'_n$};

%\node[anchor=south] at (0.75,0.25) {$i_2$};%2nd vertex
%\node[anchor=north] at (0.75,-0.25) {$j_2$};

\node[anchor=south] at (1.75,0.3) {$\cdots$};%4th vertex
\node[anchor=north] at (1.75,-0.3) {$\cdots$};

\node[anchor=south] at (3.25,0.25) {$j_2$};%6th vertex
\node[anchor=north] at (3.25,-0.25) {$j'_2$};

\node[anchor=south] at (4.25,0.25) {$j_1$};%7th vertex
\node[anchor=north] at (4.25,-0.25) {$j'_1$};

\node[anchor=west] at (4.5,0.) {$i$};
\node[anchor=east] at (4,0.) {$k$};

\filldraw[black] (0.25, 0) circle (2pt);
\filldraw[black] (0.75, 0) circle (2pt);
\filldraw[black] (2.25, 0) circle (2pt);
\filldraw[black] (2.75, 0) circle (2pt);
\filldraw[black] (3.25, 0) circle (2pt);
%\filldraw[black] (3.75, 0) circle (2pt);
\filldraw[black] (4.25, 0) circle (2pt);

\draw[black] (0, 0) -- (3.5, 0);
\draw[black] (0.25, 0.25) -- (0.25, -0.25);
\draw[black] (0.75, 0.25) -- (0.75, -0.25);
\draw[black] (2.25, 0.25) -- (2.25, -0.25);
\draw[black] (2.75, 0.25) -- (2.75, -0.25);
\draw[black] (3.25, 0.25) -- (3.25, -0.25);
\draw[black] (4, 0) -- (4.5, 0);
\draw[black] (4.25, 0.25) -- (4.25, -0.25);
\end{tikzpicture}
\end{center}
where the sum is over all configurations with the 
specified $01$ words for $\lambda$ and $\nu$; here,
a configuration is a placeholder for its weight, and $k \in \{0,1\}$
stands for the label of any internal edge. In particular,
by \cite[Thm. 1.1]{gorbounov.korff.mihalcea:YB},
\[ (t_{00}(y)+ q t_{11}(y))\cO_\lambda = \lambda_y(\cQ^\vee) \star \cO_\lambda \]
and the left hand side may be calculated using graphical calculus. 
\begin{example}\rm 
Set $k=2$, $n=7$ and $J=1101011$, $J'=1001101$, corresponding to
$\lambda= (3,2)$ and $\nu=(3,1,1)$ respectively. There is a single
configuration calculating the coefficient of $\cO_\nu$ in $t_{10}\cO_\lambda$, given by
%of the weights appearing in the coefficient of $\cO_\nu$ in %$t_{10}\cO_\lambda$ is the (weight of the) configuration
\begin{center}
\begin{tikzpicture}
%%%%%%%LABELLING
\node[anchor=east] at (0.,0.) {$0$};

\node[anchor=south] at (0.25,0.25) {$1$};%1st vertex
\node[anchor=north] at (0.25,-0.25) {$1$};

\node[anchor=south] at (0.75,0.25) {$1$};%2nd vertex
\node[anchor=north] at (0.75,-0.25) {$0$};

\node[anchor=south] at (1.25,0.25) {$0$};%3rd vertex
\node[anchor=north] at (1.25,-0.25) {$1$};

\node[anchor=south] at (1.75,0.25) {$1$};%4th vertex
\node[anchor=north] at (1.75,-0.25) {$1$};

\node[anchor=south] at (2.25,0.25) {$0$};%5th vertex
\node[anchor=north] at (2.25,-0.25) {$0$};

\node[anchor=south] at (2.75,0.25) {$1$};%6th vertex
\node[anchor=north] at (2.75,-0.25) {$0$};

\node[anchor=south] at (3.25,0.25) {$1$};%7th vertex
\node[anchor=north] at (3.25,-0.25) {$1$};

\node[anchor=west] at (3.5,0.) {$1$};
%%%%%%%%%%%%%%%%%%%%%%%%%%%%Drawing
\draw[black] (0, 0) -- (0.2, 0);
\draw[black] (0.25, 0.25) -- (0.25, -0.25);% 1st vertical line
\draw[black] (0.3, 0) .. controls (0.75, 0) .. (0.75, -0.25);

\draw[black] (0.75, 0.25) .. controls (0.75, 0) .. (1, 0);
\draw[black] (1.25, -0.25) .. controls (1.25, 0) .. (1, 0);
\draw[black] (1.25, 0.25) .. controls (1.25, 0) .. (1.7, 0);

\draw[black] (1.75, 0.25) -- (1.75, -0.25);%2nd vertical line
\draw[black] (2.25, -0.25) .. controls (2.25, 0) .. (1.8, 0);
\draw[black] (2.25, 0.25) .. controls (2.25, 0) .. (2.5, 0);
\draw[black] (2.75, -0.25) .. controls (2.75, 0) .. (2.5, 0);
\draw[black] (2.75, 0.25) .. controls (2.75, 0) .. (3, 0);
\draw[black] (3.15, 0) -- (3, 0);

%\draw[black] (3.25, 0.25) -- (3.25, -0.25);%3rd vertical line
\draw[black] (3.25, 0.25) .. controls (3.25, 0) .. (3.5, 0);
\draw[black] (3.25, -0.25) .. controls (3.25, 0) .. (3, 0);
%\draw[black] (3.3, 0) -- (3.5, 0);
\end{tikzpicture}
\end{center}
and this coefficient is 
$
(1+z/\ve_{7})(1+z/\ve_{4})(-z/\ve_{2})(-z/\ve_6)
$
as there is a vertex of type (I) in columns 4 and 7 and a vertex of type (II) in columns 2 and 6 (numbered from right to left). 
%There are many other possible configurations and summing up all their %contributions then gives the full coefficient.
\end{example}

\subsection{The partition function} Fix partitions 
$\lambda, \nu$ in the $k \times (n-k)$ rectangle. The 
{\bf partition function} $Z_\lambda^\nu(z_1, \ldots, z_k|\ve;q)$
is defined by
\begin{equation}\label{E:part-fun} 
\begin{split} Z_\lambda^\nu(z_1, \ldots, z_k|\ve; q)& = \textrm{ coefficient of } \cO_\nu \textrm{ in } \left(\prod_{s=1}^k(t_{00}(z_s)+ q t_{11}(z_s)\right)(\cO_\lambda) \\
& = \textrm{ coefficient of } \cO_\nu \textrm{ in } \left(\lambda_{z_1}(\cQ^\vee) \star \ldots \star \lambda_{z_k}(\cQ^\vee)\right)(\cO_\lambda)
\end{split}
\end{equation}
Since the transfer operators commute (or since the ring $\QK_T(\Gr(k,n))$ is commutative) it follows that 
$Z_\lambda^\nu(z_1, \ldots, z_k|\ve;q)$ is a polynomial
in $\K_T(\pt)[q][z_1, \ldots, z_k]$ which is {\em symmetric}
in the variables $z$.

One may also define the partition function 
using the graphical calculus: 
\begin{equation}\label{E:graph-part-fun}
Z_\lambda^\nu(z_1, \ldots, z_k|\ve;q)=\sum_{\gamma^\lambda_\nu} q^d \op{wt}(\gamma^\lambda_\nu)
\end{equation}
where the sum is over lattice configurations $\gamma_\nu^\lambda$ consisting of $k$ rows and $n$ columns, and such that: 
\begin{itemize} \item the top boundary is given by $\lambda$ and the
bottom boundary is given by $\nu$;
\item the $i$th row from the top corresponds to parameter $z_i$; 
\item the indices on the left and right boundaries coincide;
\item $d$ is the number of $1$'s in the left (or right) boundary.
\end{itemize}
The condition that the left and right boundaries coincide may be interpreted as asking for the configurations on a cylinder with prescribed top and bottom boundaries. The weight $\op{wt}(\gamma^\lambda_\nu)$ of a lattice configuration is defined as the product of all the individual vertex weights occurring in $\gamma^\lambda_\nu$. In particular, lattice configurations containing a vertex not listed in \cref{5v} have weight zero.

%%%%%%%%%%%%%%%%%%%%%%%%%%%%%%%%%%%%%%%%%%%%%%%
\begin{figure}[h!]
\centering
\includegraphics[width=.9\textwidth]{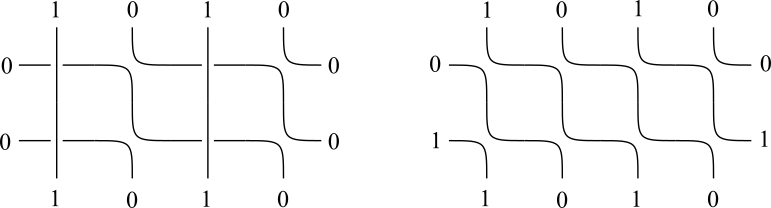} 
\caption{The two possible lattice configurations for $\Gr(2,4)$ and $\lambda=\nu=(1,0)$ with $J_\lambda=0101$.}\label{fig:Gr24example}
\end{figure}

\begin{example}\label{ex:Gr24}\rm
    Let us consider as a simple non-trivial example $\Gr(2,4)$ with $\lambda=\nu=(1,0)$. In this case there are only two possible lattice configurations which are depicted in Figure \ref{fig:Gr24example}. The configuration on the left has weight
    \[
    (1+z_1/\ve_2)(1+z_2/\ve_2)(1+z_1/\ve_4)(1+z_2/\ve_4)\;,
    \] 
    while the one on the right has weight 
    \[
    q(z_1/\ve_2)(z_2/\ve_1)(z_1/\ve_4)(z_2/\ve_3)\;.
    \]
    The power of $q$ occurs since in the last row we have a line carrying a 1-letter crossing the boundary of the cylinder. This corresponds to the action of $t_{11}$ in \cref{E:part-fun}. Thus, the partition function from \cref{E:graph-part-fun} in this simple example reads,
    \begin{equation}\label{Z24}
        Z_\lambda^\nu(z_1, \ldots, z_k|\ve;q)=\frac{qz_1^2z_2^2}{\ve_1\ve_2\ve_3\ve_4}+(1+z_1/\ve_2)(1+z_2/\ve_2)(1+z_1/\ve_4)(1+z_2/\ve_4)\;.
    \end{equation}
\end{example}

\subsection{The dual partition function}
 The level-rank duality is an isomorphism 
$\Theta:\Gr(k,n) \to \Gr(n-k,n)$ depending on 
an ordered (equivariant) basis of $\C^n$. The
induced pull-back 
$\Theta^*:\K_T(\Gr(n-k,n)\to
\K_T(\Gr(k,n))$
satisfies
\[ \Theta^* \lambda_y(\cS) = \lambda_y(\cQ^\vee) \/; \qquad\Theta^*(\ve_i \otimes \cO_\lambda) =
\ve_{n-i+1}^{-1} \otimes \cO_{\lambda^t} \]
where $\lambda^t$ is the transpose of $\lambda$. 
See, e.g., \cite[\S 3.3]{gorbounov.korff.mihalcea:YB}.
In \cite{gorbounov2017quantum} a dual Yang-Baxter action is constructed
 on $\bbV_n$, given by 
a monodromy matrix $\widetilde{\mathbf{T}}=(\widetilde{t}_{ij})_{0 \le i,j \le 1}$ and an R-matrix
$\widetilde{\mathbf{R}}$, and a dual $5$-vertex model,
called the {\em vicious walkers}. By the level-rank duality
the quantum trace corresponds to the QK multiplication by $\lambda_y(\cS)$:
\[ (\widetilde{t}_{00}+q \widetilde{t}_{11})(\cO_\lambda) = \lambda_y(\cS)\star \cO_\lambda \/. \]
The corresponding dual partition function may be defined 
by composition of $n-k$ dual transfer operators:
\begin{equation}\label{E:dual-part-fun} 
\begin{split} \widetilde{Z}_\lambda^\nu(z_1, \ldots, z_{n-k}|\ve; q)& = \textrm{ coefficient of } \cO_\nu \textrm{ in } \left(\prod_{s=1}^{n-k}(\widetilde{t}_{00}(z_s)+ q \widetilde{t}_{11}(z_s)\right)(\cO_\lambda) \\
& = \textrm{ coefficient of } \cO_\nu \textrm{ in } \left(\lambda_{z_1}(\cS) \star \ldots \star \lambda_{z_{n-k}}(\cS)\right)(\cO_\lambda)
\end{split}
\end{equation}
Then $\widetilde{Z}_\lambda^\nu(z_1, \ldots, z_{n-k}|\ve;q) \in \K_T(\pt)[z_1, \ldots, z_{n-k}][q]$ is a symmetric function in the $z$ variables.

There is a graphical calculus representation of the vicious-walker model. By the level-rank duality we will now change the height of the cylinder to $n-k$ instead of $k$. The vertices are
the following:
\begin{equation}
\text{(I')}\quad\avoiding{0}{0}\qquad
\text{(II')}\quad\avoiding{1}{0}\qquad
\text{(III')}\quad\connecting{0}{1}\qquad
\text{(IV)'}\quad\passing{1}{0}\qquad
\text{(V')}\quad\avoiding{0}{1}
\end{equation}
The vertex of type (I') contributes a factor $1+z_r\ve_{c}$ if placed in column $c$ and row $r$ and the vertex of type (II') gives a factor $-z_r\ve_{c}$ while the ones of type (III'-V') give a factor of 1. The rows are read top to bottom, while the columns right to left.
As before, all other types of vertices are assigned the weight zero.
Then one can analogously define the dual partition function
\begin{equation}\label{E:dual-graph-part-fun}
\widetilde{Z}_\lambda^\nu(z_1, \ldots, z_{n-k}|\ve;q)=\sum q^d \op{wt}(\widetilde{\gamma}^\lambda_\nu)
\end{equation}
where the sum is over dual lattice configurations $\widetilde{\gamma}^\lambda_\nu$
satisfying the analogous properties of those in \cref{E:graph-part-fun}. 

\begin{remark}
    Instead of using a model based on lattice paths, the lattice configurations can also be mapped bijectively to dimer configurations on a honeycomb or hexagonal lattice.
\end{remark}

\begin{remark}\rm
While we have stated the second lattice model here for completeness, one can use {\em level-rank duality} \cite[Cor. 4.14]{gorbounov2017quantum}, \cite[Sections 3.3 and 8.3]{gorbounov.korff.mihalcea:YB} to obtain the second lattice model and its partition function from the first. Therefore, we shall henceforth only discuss the lattice model with the vertices from \cref{5v}.
\end{remark}

\section{Products of transfer matrices}
The goal of this section is to study Schubert expansions
of the products of transfer operators which give the partition
functions. The main result is \Cref{prop:expansion-ly}, which sits 
at the foundation of later applications, in 
the calculation of the QK
structure constants, and interpreting the partition function
as an equivariant K-theoretic generalization of Postnikov's toric
Schur polynomials \cite{postnikov:affine}.
We start with the following key lemma. 
\begin{lemma}\label{lemma:noq}
    The coefficients in the expansion of the quantum products
    \[\lambda_{z_1}(\cS) \star \ldots \star \lambda_{z_{n-k}}(\cS)\quad\text{ and }\quad
    \lambda_{z_1}(\cQ^\vee) \star \ldots \star \lambda_{z_{k}}(\cQ^\vee)\]
    into Schubert classes are independent of the quantum parameter $q$.
\end{lemma}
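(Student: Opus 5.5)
The plan is to compute the products by applying the transfer matrices to the identity $\cO_\emptyset = 1$ and reading off the answer in the graphical calculus. This reduces the lemma to a statement about cylindric lattice configurations. By \cref{transfer_matrix},
\[\lambda_{z_1}(\cQ^\vee)\star\cdots\star\lambda_{z_k}(\cQ^\vee)=\prod_{s=1}^k\bigl(t_{00}(z_s)+q\,t_{11}(z_s)\bigr)(\cO_\emptyset),\]
so the coefficient of $\cO_\mu$ is $Z_\emptyset^\mu(z|\ve;q)$, given by the sum \cref{E:graph-part-fun}. The claim is therefore that every configuration with top boundary $J_\emptyset$ that contributes has only $0$'s on its left and right boundary, so that $d=0$ throughout. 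The statement for $\lambda_{z}(\cS)$ will then follow from the level-rank duality $\Theta$. Indeed, $\Theta^*$ identifies $\QK_T(\Gr(n-k,n))$ with $\QK_T(\Gr(k,n))$, sends $\lambda_y(\cS)$ to $\lambda_y(\cQ^\vee)$, sends Schubert classes to Schubert classes (after twisting $\ve_i\mapsto\ve_{n-i+1}^{-1}$), and leaves $q$ unchanged. Exchanging $k$ and $n-k$ then turns the first assertion into the second.

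For the first assertion I will analyse a single row. The first step is to characterize exactly when the $q$-term $t_{11}(z)\cO_\mu$ is nonzero. In the osculating model a row with boundary labels $i=j=1$ has a $1$-edge entering from the right and leaving on the left. Tracking the horizontal $1$-edges through the vertex types (I)--(V), I expect to show that this is possible only if the $01$ word of $\mu$ has a $1$ in a particular extreme position, equivalently a condition such as $\mu_k\ge 1$ (the first column of $\mu$ is full). In parallel I will show the complementary monotonicity statement: a single application of $t_{00}(z)$ or $t_{11}(z)$ increases the number of nonzero parts $\ell(\mu)$ by at most one. The model for this is the classical fact that $\lambda_y(\cQ^\vee)$ acts by a dual Pieri rule adding at most one box in each row. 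This should be visible from the fact that, row by row, each path of $0$'s moves by at most one step past the path to its left.

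Given these two facts, the lemma follows by induction on the number $s$ of transfer matrices applied. Starting from $\ell(\emptyset)=0$, after $s$ rows only classes with $\ell(\mu)\le s$ occur. Hence for $s\le k-1$ the next row cannot use $t_{11}$, and $q$ never appears in the first $k$ rows. A small check is needed at the last row: the condition for a $q$-term is on the input state, and the input to row $k$ has $\ell\le k-1$, so the count is tight exactly at $k$ factors. This matches the fact that the lemma is stated for exactly $k$ copies of $\lambda_y(\cQ^\vee)$.

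The main obstacle is the precise single-row criterion for $t_{11}$. The conventions (right-to-left reading of $J_\lambda$, and which boundary column the wrap-around passes through) must be matched carefully, and it is possible that the correct invariant is not $\ell(\mu)$ but the position of the leftmost $0$ in the $01$ word. I would first test the criterion on \Cref{ex:Gr24}, where the $q$-configuration occurs with top boundary $(1,0)$ in the last row, and on $\lambda=\emptyset$ for $\Gr(2,4)$. I would then adjust the invariant so that it is monotone under one row and obstructs the wrap-around until $k$ rows have been used. As a fallback, I would argue geometrically. The coefficient is a product of $k$ factors, each of degree at most $n-k$ in $y$. By degree bounds on curve neighborhoods \cite{BCMP:qkfin}, together with \cref{E:QKpairing}, any $q^d$-term with $d\ge1$ would have to pair against classes of codimension larger than $k(n-k)$, so all such terms vanish.
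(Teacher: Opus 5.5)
Your proposal is correct and is essentially the paper's argument. The paper also applies the transfer matrices to $\cO_\emptyset$ and observes that the rightmost $1$-strand starts in column $k+1$ and moves at most one column to the right per row, so it cannot wrap around in $k$ rows. Your invariant encodes the same fact, since that strand sits in column $k+1-\ell(\mu)$, and your single-row criterion is exactly right: a $1$ on the right edge of column $1$ forces a vertex of type (II) or (V) there, hence $j_1=1$, i.e.\ $\mu_k\ge 1$. Handling $\lambda_y(\cS)$ by level-rank duality instead of the vicious-walker model is a legitimate shortcut.

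One correction to your heuristic: in the osculating model it is the $1$-strands that move at most one step per row. So $\lambda_y(\cQ^\vee)$ adds horizontal strips, with at most one box per \emph{column}, and that is what bounds the growth of $\ell$. The $0$'s, by contrast, can jump arbitrarily far through chains of type (V) vertices. Your codimension-count fallback is a cohomological heuristic that does not carry over to K-theory as stated, but you do not need it.
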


\begin{proof}\label{lemma:expansion} Combinatorially, 
    this is a direct consequence of the graphical calculus. For example, in the case of the multiplications by $\lambda_{z_i}(\cQ^\vee)$, which correspond to the osculating walkers model, each of the lattice paths cannot propagate horizontally by more than one step to the right in a single row. Starting from the $01$-word for the empty partition (i.e., $0^k 1^{n-k}$, read right to left)
    the $1$ strands can only move at most one place to the right, and therefore may reach, but cannot cross, the first column in $k$ steps.
    %all lattice paths start from their $1$ leftmost position (columns %$n$ to $n-k-1$). As there are only $k$ rows the rightmost path %starting in column $n$ can at most reach column 1.
\end{proof}
\begin{remark} One may also prove the previous lemma directly from geometry, by analyzing the maximal powers of $q$ which appear when multiplying Pieri classes; details are left to the reader. See \cite[Prop. 4.1]{gorbounov.korff.mihalcea:YB} and \cite[Thm. 8.3]{BCMP:qkpos}. 
\end{remark}

Recall that a skew partition $\lambda/\mu$ is called a {\bf rook strip} if the corresponding shape has at most one box in each row and column.
\begin{prop}\label{prop:expansion-ly} 
The class $\lambda_{z_1}(\cQ^\vee) \star \ldots \star \lambda_{z_k}(\cQ^\vee)$ has the following expansions.

(a) The expansion of $\lambda_{z_1}(\cQ^\vee) \star \ldots \star \lambda_{z_k}(\cQ^\vee)$ into dual ideal sheaves $\mathcal{I}^\mu$ is given by:
\begin{equation}\label{E:lambda-to-Iupper}
    \lambda_{z_1}(\cQ^\vee) \star \ldots \star \lambda_{z_k}(\cQ^\vee)=
\sum_{\lambda \subset (n-k)^k}G_{\lambda}(1+z_1, \ldots, 1+z_k|1-\ve_1^{-1}, \ldots, 1- \ve_n^{-1})\mathcal{I}^\lambda \/. \end{equation}

(b) The expansion into opposite Schubert classes $\cO^\mu$ is given by:
\begin{equation}\label{E:lambda-to-Oupper}
    \lambda_{z_1}(\cQ^\vee) \star \ldots \star \lambda_{z_k}(\cQ^\vee)=
    \sum_{\mu\subset (n-k)^k} B_{\mu}(z_1,\ldots,z_k|1-\ve^{-1})\;\mc{O}^\mu \quad,
\end{equation}
where $B_\mu(z_1, \ldots, z_k|1-\ve^{-1})$ is the symmetric polynomial 
\[
B_\mu(z_1,\ldots,z_k|1-\ve^{-1}):=\sum_{\lambda}(-1)^{|\lambda/\mu|} G_{\lambda}(1+z_1, \ldots, 1+z_k|1-\ve_1^{-1}, \ldots, 1- \ve_n^{-1})
\]
with the sum running over all partitions $\lambda \subset (n-k)^k$ such that $\lambda/\mu$ is a rook strip.

(c) Define the polynomial 
\begin{equation}\label{Amu}
A_\mu(z|\ve):=\sum_{\lambda}(-1)^{|\lambda/\mu^\vee|}G_{\lambda}(1+z_1, \ldots, 1+z_k|1-\ve_n^{-1},\ldots,1-\ve_1^{-1})
\end{equation}
with the sum running over all partitions $\lambda \subset (n-k)^k$ such that $\lambda/\mu^\vee$ is a rook strip. Then the expansion of $\lambda_{z_1}(\cQ^\vee) \star \ldots \star \lambda_{z_k}(\cQ^\vee)$ into Schubert classes $\cO_\mu$ is given by:
\begin{equation}\label{E:lambda-to-Olower}
    \lambda_{z_1}(\cQ^\vee) \star \ldots \star \lambda_{z_k}(\cQ^\vee)=
    \sum_{\mu\subset (n-k)^k} A_{\mu}(z|\ve_{rev})\;\mc{O}_\mu \quad \/.
\end{equation}
\end{prop}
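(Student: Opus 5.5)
By \Cref{lemma:noq}, the quantum product $\lambda_{z_1}(\cQ^\vee) \star \cdots \star \lambda_{z_k}(\cQ^\vee)$ has $q$-independent Schubert coefficients. Setting $q=0$ (the coefficients are polynomial in $q$ by finiteness), it therefore equals the classical product $\lambda_{z_1}(\cQ^\vee)\cdots\lambda_{z_k}(\cQ^\vee)$ in $\K_T(X)$. All three parts thus reduce to classical statements, and the plan is to prove (a) by localization and then deduce (b) and (c) by linear algebra.

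For (a), pair both sides with $\cO_\lambda$ using the classical pairing. By \cref{E:ideal-sheaves}, the coefficient of $\mathcal{I}^\lambda$ is
\[\Big\langle \cO_\lambda, \prod_{s}\lambda_{z_s}(\cQ^\vee)\Big\rangle = \chi\Big(X_\lambda, \prod_s \lambda_{z_s}(\cQ^\vee)|_{X_\lambda}\Big).\]
The task is to show this equals $G_\lambda(1+z|1-\ve^{-1})$. I would proceed in three steps.
\begin{itemize}
\item Write $\lambda_z(\cQ^\vee)=\lambda_z(\C^{n\vee})/\lambda_z(\cS^\vee)$, using multiplicativity of $\lambda_y$ on the dual of the tautological sequence.
\item Compute the Euler characteristic by Atiyah--Bott localization over the fixed points $\bfe_\eta$ with $\eta\supseteq\lambda$. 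Here $\langle\cO_\lambda,\bfe_\eta\rangle$ is given by \cref{E:locOlambda}, and $\lambda_z(\cQ^\vee)$ restricted to $\bfe_\eta$ is an explicit product over the complementary weights.
\item Identify the result with $G_\lambda$.
\end{itemize}

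The main obstacle is the identification in the last step. My first attempt would avoid brute-force localization and instead use that the coefficient is the polynomial $Z_\emptyset^\lambda$ computed by the osculating walkers with empty top boundary. I would then show that these lattice configurations are in weight-preserving bijection with set-valued tableaux of shape $\lambda$ computing McNamara's factorial Grothendieck polynomial, with the type (I) weights $1+z/\ve_i$ and type (II) weights $-z/\ve_i$ matching the weights $x+t+\beta xt$ at $\beta=-1$.

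As a cross-check, specialize $z=-\ve_\eta$. Then $\lambda_{-\ve_\eta}(\cQ^\vee)$ vanishes at every fixed point other than $\bfe_\eta$, because it contains a factor $1-\ve_j/\ve_j$ whenever the weights differ. This gives
\[\sum_\lambda c_\lambda(-\ve_\eta)\,\langle\mathcal{I}^\lambda,\bfe_\mu\rangle=\delta_{\eta\mu}\cdot(\text{unit}).\]
The cross-check is then that $G_\lambda(1-\ve_\eta)$ satisfies the same triangular system. Since both sides are symmetric polynomials of bounded degree in each $z_i$, this family of specializations only partly pins them down, so I would rely on the bijective lattice argument for the actual proof.

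For (b), substitute the Möbius inversion $\mathcal{I}^\lambda=\sum_{\mu}(-1)^{|\lambda/\mu|}\cO^\mu$, where $\mu\supseteq\lambda$ and $\mu/\lambda$ is a rook strip. This identity is the K-theoretic boundary expansion of Brion and Buch, and it follows by pairing with $\cO_\nu$ using \cref{E:classical-pairing} together with the alternating sum over rook strips. Inserting it into (a) and collecting the coefficient of $\cO^\mu$ yields $B_\mu$. I will double-check the direction of the rook-strip containment (which of $\lambda/\mu$ or $\mu/\lambda$) against the convention in the statement.

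For (c), apply the left Weyl group action $\bs{w_0}$. It fixes the $\GL_n$-equivariant class $\prod_s\lambda_{z_s}(\cQ^\vee)$, sends $\cO^{\mu^\vee}$ to $\cO_\mu$, and acts on scalars by $\ve_i\mapsto\ve_{n+1-i}$. Applying it to (b) and reindexing $\mu\mapsto\mu^\vee$ gives the coefficient $w_0(B_{\mu^\vee})$, which is $A_\mu$ with the reversed variables in the statement's notation.
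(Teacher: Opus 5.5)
\textbf{The argument you rely on for (a) fails.} Your reduction to $q=0$ and your treatment of (b) and (c) follow the paper (M\"obius inversion, then the left action of $\bs{w_0}$). But (a) is where all the content lies, and there your main step is incorrect. Write $c=\prod_s\lambda_{z_s}(\cQ^\vee)$.

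\begin{itemize}
\item The coefficient of $\mathcal{I}^\lambda$ in $c$ is $\langle c,\cO_\lambda\rangle$.
\item By contrast, $Z_\emptyset^\lambda$ is by definition the coefficient of $\cO_\lambda$, i.e.\ $\langle c,\mathcal{I}^\lambda\rangle$. The paper shows $Z_\emptyset^\lambda=A_\lambda$, a modified skew Grothendieck polynomial of shape $(n-k)^k//\lambda$, not $G_\lambda$.
\end{itemize}

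This already fails for $\bP^1=\Gr(1,2)$ and $\lambda=(1)$. The unique osculating-walker configuration gives $Z_\emptyset^{(1)}=-z/\ve_2$. The coefficient of $\mathcal{I}^{(1)}$ is instead $\langle c,\cO_{(1)}\rangle=c|_{\bfe_{(1)}}=1+z/\ve_1=G_{(1)}(1+z|1-\ve^{-1})$. So the weight-preserving bijection with set-valued tableaux of shape $\lambda$ that you propose cannot exist.

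Your Atiyah--Bott outline also stops exactly at the identification with $G_\lambda$. That identification is the Cauchy-type identity $\prod_{i}\prod_{j\notin I_\mu}(1+z_i/\ve_j)=\sum_\lambda G_\lambda(1+z|1-\ve^{-1})\langle\mathcal{I}^\lambda,\bfe_\mu\rangle$, which the paper imports from \cite{gorbounov2017quantum}.

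\textbf{Your ``cross-check'' is in fact a complete proof, and you discarded it for a wrong reason.} Both $\langle c,\cO_\lambda\rangle$ and $G_\lambda(1+z|1-\ve^{-1})$ are symmetric in $z$ of degree at most $n-k$ in each $z_i$; for $G_\lambda$ this follows from the bialternant formula. That space has rank $\binom{n}{k}$. Evaluation at the $\binom{n}{k}$ points $z=-\ve_\eta$ is injective on it over the fraction field of $\K_T(\pt)$: the $G_\lambda$, $\lambda\subset(n-k)^k$, lie in it, and their value matrix $\langle\cO_\lambda,\bfe_\eta\rangle$ from \eqref{E:locOlambda} is triangular for inclusion with nonzero diagonal.

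By \Cref{lemma:loc-ideal}, $c|_{z=-\ve_\eta}=\bfe_\eta$. So the right side of your display is $\delta_{\eta\mu}\Eu_\eta$, not a unit. Hence $\langle c,\cO_\lambda\rangle|_{z=-\ve_\eta}=\langle\bfe_\eta,\cO_\lambda\rangle=G_\lambda(1-\ve_\eta|1-\ve^{-1})$, and the two polynomials coincide. This interpolation argument proves the cited Cauchy identity rather than assuming it.

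\textbf{Fix the direction of the M\"obius inversion in (b).} The boundary $\partial X^\lambda$ is the union of the $X^\mu$ with $\mu\subsetneq\lambda$. So $\mathcal{I}^\lambda=\sum(-1)^{|\lambda/\mu|}\cO^\mu$ over $\mu\subseteq\lambda$ with $\lambda/\mu$ a rook strip. Your pairing check against $\cO_\nu$ works in this direction, since such $\mu$ correspond to subsets of the corners of $\lambda$. With $\mu\supseteq\lambda$ you would not obtain $B_\mu$ as stated.
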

\begin{proof} By \Cref{lemma:noq} it suffices to prove 
the statement for $q=0$, i.e., by replacing the quantum product 
$\star$ with the multiplication $\cdot$ in the classical ring.
From localization,
\begin{equation}\label{E:locf} 1= \sum_{\mu}\frac{\bfe_{\mu}}{\mathrm{Eu}_{\mu}}\end{equation}
Calculate the multiplication by the fixed point class $\bfe_\mu$: 
\begin{equation}\label{E:classical-e} \lambda_z(\cQ^\vee) \cdot \bfe_\mu = \prod_{j \in [n] \setminus I_\mu} \left(1+ \frac{z}{\varepsilon_j}\right)\bfe_\mu \end{equation}
We now use the following Cauchy-type identity from 
\cite[Prop~2.16 and Cor. ~4.5]{gorbounov2017quantum}:
    \[
    \prod_{i=1}^k\prod_{j\in [n]\backslash I_{\mu}}(1+z_i/\ve_j)=\sum_{\lambda\subset(n-k)^k}G_\lambda(1+z|1-\ve^{-1})(\mc{I}^\lambda,\bfe_\mu)
    \]
with conventions adapted to this paper 
and with shorthand 
$(1+z, 1-\ve^{-1})=(1+z_1, \ldots, 1+z_k|1-\ve_1^{-1}, \ldots, 1-\ve_n^{-1})$.
Using \cref{E:locf} and \cref{E:classical-e}, we calculate:
\begin{eqnarray*}
\lambda_{z_1}(\cQ^\vee)\cdot \ldots \cdot \lambda_{z_k}(\cQ^\vee) & = & 
    \lambda_{z_1}(\cQ^\vee)\cdot \ldots \cdot \lambda_{z_k}(\cQ^\vee)\sum_{\mu}\frac{\bfe_{\mu}}{\mathrm{Eu}_{\mu}} \\
& = & \sum_{\mu}\frac{\bfe_{\mu}}{\mathrm{Eu}_{\mu}} \prod_{i=1}^k\prod_{j\in [n]\backslash I_{\mu}}(1+z_i/\ve_j)\\
&=&\sum_{\mu}\frac{\bfe_{\mu}}{\mathrm{Eu}_{\mu}} \sum_{\lambda}G_{\lambda}(1+z|1-\ve^{-1})(\mathcal{I}^\lambda,\bfe_\mu)\\ 
& = & \sum_{\lambda}G_{\lambda}(1+z|1-\ve^{-1})\mathcal{I}^\lambda   
\end{eqnarray*}    
where the last equality follows again by expanding the (dual) ideal sheaf class 
$\mathcal{I}^\lambda$ into classes of fixed points, using localization. This proves (a). 
We now use the M{\"o}bius inversion to write the ideal sheaf
$\mathcal{I}^\lambda$ into Schubert classes (see, e.g.,~\cite[\S 3.4]{BCMP:qkchev}):
\begin{equation}\label{E:ItoO} \mathcal{I}^\lambda = \sum (-1)^{|\lambda/\mu|} \cO^\mu \end{equation}
where the summation is over $\mu$ such that $\lambda/\mu$ is a rook strip.
Collecting coefficients of $\cO^\mu$ gives part (b) of the proposition.

Finally, to prove part (c), observe that the left Weyl group action by $w_0$ leaves the left hand side invariant (since all bundles are $\GL_n$-homogeneous), while $w_0.\cO^\lambda = \cO_{\lambda^\vee}$ for each $\lambda$. Then the claim follows once we also notice that 
\[ w_0.G_\nu(1+z|1-\varepsilon_1^{-1}, \ldots, 1-\varepsilon_n^{-1})=
G_\nu(1+z|1-\varepsilon_n^{-1}, \ldots, 1-\varepsilon_1^{-1}) \/.\]
\end{proof}
For a partition $\lambda$ in the $k \times (n-k)$ rectangle recall that
$\ve_\lambda=(\ve_{\lambda_i+k-i+1})_{1 \le i \le k}$.
Next we prove the following key result, which will be used to 
calculate quantum K Schubert structure constants. 
\begin{lemma}\label{lemma:loc-ideal} The specialization $z=-\ve_\lambda$ satisfies:
\[(\lambda_{z_1}(\cQ^\vee) \star \ldots \star \lambda_{z_k}(\cQ^\vee))_{z=-\ve_\lambda} = \bfe_\lambda \/.\]
\end{lemma}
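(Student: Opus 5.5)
By \Cref{lemma:noq}, the quantum product $\lambda_{z_1}(\cQ^\vee) \star \cdots \star \lambda_{z_k}(\cQ^\vee)$ equals the classical product $\lambda_{z_1}(\cQ^\vee)\cdot \ldots \cdot \lambda_{z_k}(\cQ^\vee)$ in $\K_T(X)$. It therefore suffices to compute this classical class and then specialize $z$. The plan is to work entirely with the localization decomposition used in the proof of \Cref{prop:expansion-ly}.

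Insert $1=\sum_\mu \bfe_\mu/\Eu_\mu$ from \cref{E:locf} and apply \cref{E:classical-e}. This gives
\[
\lambda_{z_1}(\cQ^\vee)\cdot \ldots \cdot \lambda_{z_k}(\cQ^\vee)=\sum_{\mu}\frac{\bfe_\mu}{\Eu_\mu}\prod_{i=1}^k\prod_{j\in[n]\setminus I_\mu}\Bigl(1+\frac{z_i}{\ve_j}\Bigr).
\]
The coefficients are polynomials in $z$, so we may substitute $z_i=-\ve_{\lambda_i+k-i+1}$. The product for a given $\mu$ becomes
\[
\prod_{a\in I_\lambda}\prod_{j\in[n]\setminus I_\mu}\bigl(1-\ve_a/\ve_j\bigr).
\]
This vanishes as soon as some $a\in I_\lambda$ lies outside $I_\mu$. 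Both sets have cardinality $k$, so the product is nonzero only when $I_\lambda=I_\mu$, that is, only when $\mu=\lambda$. Only the term $\mu=\lambda$ survives.

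For $\mu=\lambda$ the surviving coefficient is $\prod_{a\in I_\lambda}\prod_{j\notin I_\lambda}(1-\ve_a/\ve_j)$. We then need to show that this equals $\Eu_\lambda$, so that it cancels the denominator and leaves exactly $\bfe_\lambda$.

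The only step needing care is matching this product with \cref{classEu}, $\Eu_\lambda=w_\lambda.\prod_{1\le i\le k<j\le n}(1-\ve_i/\ve_j)$. Applying $w_\lambda$ sends $\{1,\dots,k\}$ to $\{w_\lambda(1),\dots,w_\lambda(k)\}$ and $\{k+1,\dots,n\}$ to the complement. One must therefore check that the first set is exactly $I_\lambda$. From the conventions, $w_\lambda(i)=\lambda_{k-i+1}+i$, and reindexing $i\mapsto k-i+1$ gives $\{\lambda_i+k-i+1\}=I_\lambda$, as required. One should also confirm that the weights at $\bfe_\lambda$ used in \cref{E:classical-e} follow the same convention as $\ve_\lambda$ (characters $\ve_j$ with $j\in I_\lambda$ for $\cS$, so $\cQ^\vee$ has weights $\ve_j^{-1}$ for $j\notin I_\lambda$). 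This is implicit in \cref{E:classical-e}, which I would take as given. Once it is confirmed, the identity $(\cdots)_{z=-\ve_\lambda}=\bfe_\lambda$ follows immediately.
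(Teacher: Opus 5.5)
Your proposal is correct and follows the paper's proof essentially step for step. You reduce to the classical product via \Cref{lemma:noq}, expand by localization, and note that after the specialization $z=-\ve_\lambda$ only the $\bfe_\lambda$ term survives, since $I_\lambda\subset I_\mu$ forces $I_\lambda=I_\mu$; you then match the surviving coefficient with $\Eu_\lambda$ from \cref{classEu}. Your explicit check that $w_\lambda(\{1,\dots,k\})=I_\lambda$ simply fills in the step the paper leaves as ``one checks''.
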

\begin{proof} By \Cref{lemma:noq}, it suffices to consider the classical product. 
By localization, 
\[\lambda_{z_1}(\cQ^\vee) \cdot \ldots \cdot \lambda_{z_k}(\cQ^\vee) =
\sum_\nu \frac{\prod_{i=1}^k\prod_{j\in [n]\backslash I_{\nu}}(1+z_i/\ve_j)}{\Eu_\nu}\bfe_\nu \]
If we specialize $(z_1, \ldots, z_k)=-\varepsilon_\lambda$, then
\[ \prod_{i=1}^k\prod_{j\in [n]\backslash I_{\nu}}\left(1-(\varepsilon_\lambda)_i/\ve_j\right) = 0 \]
whenever $\lambda \neq \nu$. Indeed, if 
$I_\lambda \cap ([n] \setminus I_\nu) =\emptyset$ then $I_\lambda \cup ([n] \setminus I_\nu)$ 
must have size $n$, thus it must be $[n]$. 
Then $I_\lambda = I_\nu$, therefore $\lambda=\nu$. Furthermore, using the formula for the Euler class one checks that the coefficient of $\bfe_\lambda$ is equal to $1$, and 
this finishes the proof.\end{proof}
\begin{remark}\label{rmk:skew-groth}\rm
%{\color{blue} Rewrite this after the set valued tableaux section is settled.}
    In the non-equivariant/non-factorial case, i.e.~when $\ve_i=1$, the functions $A_{\mu^\vee}$ in \cref{Amu} are equal to the Lascoux-Sch\"utzenberger $H$-basis in the case of Grassmannians,
    \[
    H_\mu^{(k,n)}(x)
 :=\sum_{\substack{\mu\subset\lambda\subset (n-k)^k\\
                    \lambda/\mu\text{ a rook strip}}}
 (-1)^{|\lambda|-|\mu|}G_\lambda(x),\qquad x_i=1+z_i\;.
\]
Lenart--Robinson--Sottile \cite{lenart2006grothendieck} interpreted these $H$-polynomials as boundary ideal-sheaf classes for Grassmannians. They are also a special case of Yeliussizov's modified skew Grothendieck functions; see \cite{yeliussizov2019symmetric}. The latter are defined for any pair $\beta\subset\alpha$ of partitions as follows: 
\[
 G_{\alpha//\beta}(x)
 :=\sum_{\substack{\rho\subset\beta\\
                   \beta/\rho\text{ a rook strip}}}
 (-1)^{|\beta|-|\rho|}G_{\alpha/\rho}(x).
\]
Here $G_{\alpha/\rho}$ are the stable skew Grothendieck polynomials. Using that $G_{\mu^\vee}=G_{(n-k)^k/\mu}$ one then easily verifies that in the non-equivariant case we have
\begin{equation}
A_\mu(x)=G_{(n-k)^k//\mu}(x)=H^{(k,n)}_{\mu^\vee}(x)\;.
\end{equation}
We will review these definitions in more detail in \Cref{sec:cylindric-groth} when introducing cylindric versions of Grothendieck polynomials, where we also cover the equivariant case.
\end{remark}

\begin{cor}\label{cor:Amu=I} Regard $-z_i$'s as formal Chern roots of $\cS$, i.e., $\prod_{i=1}^k (1-yz_i) = \lambda_y(\cS)$. Then the
following hold:

(a) The polynomial $A_\mu(z|\ve)$
%$A_{\mu}(z_1,\ldots,z_k|1-\ve^{-1}_{rev})$ 
from \cref{Amu} represents the ideal sheaf $\mathcal{I}^\mu$.

(b) The following equality holds: 
%\[ A_\mu(-\varepsilon_\lambda|1-\ve^{-1}_{rev})
\[ A_\mu(-\ve_\lambda|\ve)= \langle\mathcal{I}^\mu, \bfe_\lambda\rangle \]
In particular, $A_\mu(-\ve_\lambda|\ve)=0$ 
whenever $\lambda$ is not included in $\mu$.
\end{cor}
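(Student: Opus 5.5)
The natural route is to combine part (c) of \Cref{prop:expansion-ly} with the Frobenius/duality structure, working entirely classically. By \Cref{lemma:noq}, the product $\lambda_{z_1}(\cQ^\vee) \star \cdots \star \lambda_{z_k}(\cQ^\vee)$ equals the classical product $\lambda_{z_1}(\cQ^\vee)\cdot\ldots\cdot\lambda_{z_k}(\cQ^\vee)$ in $\K_T(X)[z]$. Once the $-z_i$ are regarded as K-theoretic Chern roots of $\cS$, we have $\lambda_y(\cS)=\prod(1-yz_i)$. Since $0\to\cS\to\C^n\to\cQ\to 0$, we get $\lambda_y(\cQ^\vee)=\lambda_y(\C^{n\vee})/\lambda_y(\cS^\vee)$. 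So each $\lambda_{z_i}(\cQ^\vee)$ becomes a well-defined class evaluated along the Chern roots, and the product $\prod_i\lambda_{z_i}(\cQ^\vee)$ becomes a universal class in $\K_T(X)\otimes\K_T(X)$: the ``first factor'' is a polynomial in the $z$'s interpreted via $\cS$.

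For part (a), the key step is to identify the symmetric class $\prod_{i}\prod_{j}(\ldots)$, with $z$ interpreted as Chern roots, as the class of the diagonal. Concretely, localize: at a fixed point $\bfe_\lambda$ the Chern roots specialize to $z=-\ve_\lambda$ (up to the conventions fixed by \cref{E:locOlambda}). By \Cref{lemma:loc-ideal}, the specialization $z=-\ve_\lambda$ of the product equals $\bfe_\lambda$. Plugging this into the expansion \cref{E:lambda-to-Olower} gives
\[ \bfe_\lambda=\sum_\mu A_\mu(-\ve_\lambda|\ve_{rev})\,\cO_\mu \]
up to the reversal convention on $\ve$, which must be tracked carefully. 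Pairing with $\mathcal{I}^\nu$ and using \cref{E:ideal-sheaves} yields $A_\nu(-\ve_\lambda|\cdot)=\langle \bfe_\lambda,\mathcal{I}^\nu\rangle$. This is exactly (b). Then (a) follows from (b): a class in $\K_T(X)$ is determined by its localizations at all $\bfe_\lambda$, and a polynomial in the Chern roots of $\cS$ localizes at $\bfe_\lambda$ to its value at $z=-\ve_\lambda$. Hence $A_\mu(z|\ve)$ and $\mathcal{I}^\mu$ have the same localizations, so $A_\mu$ represents $\mathcal{I}^\mu$.

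The final vanishing statement follows because $\mathcal{I}^\mu$ is supported on $X^\mu$, which contains $\bfe_\lambda$ only if $\lambda\subset\mu$. Therefore $\langle \mathcal{I}^\mu,\bfe_\lambda\rangle=0$ otherwise.

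The main obstacle is bookkeeping. We must check that the specialization $z=-\ve_\lambda$ is genuinely the localization of the Chern roots of $\cS$ at $\bfe_\lambda$: the weights of $\cS|_{\bfe_\lambda}$ are $\ve_j$, $j\in I_\lambda$, so $\lambda_y(\cS)|_{\bfe_\lambda}=\prod(1+y\ve_j)$, matching $z_i=-\ve_j$. We must also reconcile the $\ve_{rev}$ appearing in \cref{E:lambda-to-Olower} with the argument $\ve$ in the statement. I would resolve the latter by comparing both sides on a small case, such as the $\mu=\emptyset$ term or $\Gr(1,2)$, and, if needed, by applying the $w_0$ left Weyl action as in the proof of part (c) of \Cref{prop:expansion-ly}.
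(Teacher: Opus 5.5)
Your argument is correct and is essentially the paper's proof. You specialize the expansion of \Cref{prop:expansion-ly}(c) at $z=-\ve_\lambda$ via \Cref{lemma:loc-ideal}, pair with $\mathcal{I}^\mu$ to obtain (b), conclude (a) by injectivity of localization, and get the vanishing from the support of $\mathcal{I}^\mu$. The $\ve_{rev}$ concern goes away because the reversal is already built into the definition \cref{Amu}: the $w_0$-argument in the proof of part (c) shows the coefficient of $\cO_\mu$ is exactly $A_\mu(z|\ve)$. The diagonal-class detour in your first paragraph is not needed.
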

\begin{proof} The hypothesis on the $z_i$'s means that 
 the equivariant localization:
\[ \langle \lambda_y(\cS), \bfe_\lambda\rangle = \left(\prod_{i=1} (1-yz_i)\right)_{-z \mapsto \ve_\lambda} \/.\]
Then $A_\mu(z|\ve)$ 
may also be regarded as the class in K-theory uniquely determined by the localization conditions
\begin{equation}\label{E:Amuclass} \langle A_\mu(z|\ve), \bfe_\lambda \rangle = A_\mu(-\ve_\lambda |\ve) \/\end{equation}
when $\lambda$ varies in $(n-k)^k$. This uses that the localization
map is injective \cite{Nielsen1974}. (Here we abused notation and denoted by the same symbol for the class and the polynomial $A_\mu(z|\ve)$.)
On the other side, from definition, and using duality for ideal sheaves $\langle \cO_\mu, \mathcal{I}^\lambda \rangle = \delta_{\lambda, \mu}$,
\[ A_\mu(z|\ve)= \langle\lambda_{z_1}(\cQ^\vee) \cdot \ldots \cdot \lambda_{z_k}(\cQ^\vee), \mathcal{I}^\mu \rangle
 \/.
\]
By \Cref{lemma:loc-ideal}, $\lambda_{-(\varepsilon_\lambda)_1}(\cQ^\vee) \cdot \ldots \cdot \lambda_{-(\varepsilon_\lambda)_k}(\cQ^\vee) = \bfe_\lambda$, thus
\[ A_\mu(-\varepsilon_\lambda|\ve) = \langle\lambda_{-(\varepsilon_\lambda)_1}(\cQ^\vee) \cdot \ldots \cdot \lambda_{-(\varepsilon_\lambda)_k}(\cQ^\vee), \mathcal{I}^\mu\rangle = \langle\bfe_\lambda,\mathcal{I}^\mu\rangle \/, \]
thus $A_\mu$ and $\mathcal{I}_\mu$ have the same localizations.
Then the two classes must be the same, 
again by injectivity of the localization map, proving (a).
Part (b) follows from the definition of the class $A_\mu$ from 
\cref{E:Amuclass} and the fact that
the ideal sheaf $\mathcal{I}^\mu$ is supported on the 
fixed points $e_\lambda$ such that $\lambda \subset \mu$.
\end{proof}

\begin{remark}\rm One may also analyze a 
quantum version of the specialization from \Cref{lemma:loc-ideal},
when one specializes to roots of the Bethe Ansatz equations; see \cite[Eq. (8.33) ]{gorbounov.korff.mihalcea:YB}.
The Bethe roots 
give the quantum fixed points 
$\bfe_\lambda^q$ of $\QK_T(\Gr(k,n))$, and 
when $q=0$, one recovers the fixed point classes: $(\bfe_\lambda^q)_{q=0}=\bfe_\lambda$; see \cite{gorbounov.korff.mihalcea:YB}.

Consider the simplest case, $\Gr(1,n)=\bP^{n-1}$. The Bethe Ansatz equation for $\bP^{n-1}$ is
\[ \prod_{j=1}^n (1+ x/\ve_j) = q \]
A calculation based on the formulae from \cite{gorbounov.korff.mihalcea:YB} for the on and off-diagonal operators $t_{ii}(z)$ of the monodromy matrix
shows that if $\ve_i^q$ is any solution of the Bethe Ansatz equations, then in $\QK_T(\bP^{n-1})$, 
\[ \lambda_{\ve_i^q}(\cQ^\vee) = \bfe_i^q + q \]
where $\bfe_i^q$ is the quantum fixed point. Therefore the specialization
is not a single quantum fixed-point class.
\end{remark} 

\section{From partition functions to structure constants}
In this section we prove two formulae for the Schubert structure 
constants in the quantum K-ring in terms of the partition function;
cf.~\Cref{thm:alg1} and \Cref{thm:Cramer}. 
In both cases the key is to employ appropriate 
specializations of the variables $z_i$, along with equivariant localization techniques.

\subsection{The geometric definition of the partition function}
We start by rewriting the partition function 
and its dual using the QK pairing. Recall the quantum ideal sheaves defined in \cref{E:qideals}.
\begin{prop} The partition function for the {osculating walkers} on a cylinder with 
$k$ rows and boundary conditions on the top and bottom fixed by 
$\lambda$ and $\nu$, respectively, is the inner product
    \begin{equation}\label{defZ}
    Z^\nu_\lambda(z_1,\ldots,z_{k}|\ve ;q)=\frob{ \cO_\lambda \star \mathcal{I}_q^\nu, \lambda_{z_1}(\cQ^\vee) \star \ldots \star \lambda_{z_{k}}(\cQ^\vee)}
    \end{equation}
The partition function of the {vicious walkers} on a cylinder with $n-k$ rows is:
    \begin{equation}\label{deftildeZ} \widetilde Z^\nu_\lambda(z_1,\ldots,z_{n-k}|\ve;q)=\frob{ \cO_\lambda \star \mathcal{I}_q^\nu, \lambda_{z_1}(\cS) \star \ldots \star \lambda_{z_{n-k}}(\cS)}\end{equation}
Furthermore, if $\lambda=\emptyset$ then both partition functions are independent of $q$.
\end{prop}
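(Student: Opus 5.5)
The plan is to combine the definition \eqref{E:part-fun} of $Z_\lambda^\nu$ as a matrix coefficient with the duality \eqref{E:qideals} and the Frobenius property of the QK pairing. Set $\Lambda:=\lambda_{z_1}(\cQ^\vee)\star\cdots\star\lambda_{z_k}(\cQ^\vee)$. By \eqref{E:part-fun}, $Z_\lambda^\nu$ is the coefficient of $\cO_\nu$ in $\Lambda\star\cO_\lambda$. First I would note that the coefficient of $\cO_\nu$ in any element $a\in\QK_T(\Gr(k,n))$ equals $\frob{a,\mathcal{I}^\nu_q}$. This follows from bilinearity over $\K_T(\pt)[\![q]\!]$ together with $\frob{\cO_\mu,\mathcal{I}^\nu_q}=\delta_{\mu,\nu}$. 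Hence
\[ Z_\lambda^\nu=\frob{\Lambda\star\cO_\lambda,\mathcal{I}^\nu_q}. \]

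Next I would move factors across the pairing using commutativity and associativity of $\star$ and the Frobenius property $\frob{a\star b,c}=\frob{a,b\star c}$. Concretely,
\[ \frob{\Lambda\star\cO_\lambda,\mathcal{I}^\nu_q}=\frob{\Lambda,\cO_\lambda\star\mathcal{I}^\nu_q}, \]
and symmetry of the pairing (the KGW invariants are symmetric in their insertions) gives \eqref{defZ}. The dual statement \eqref{deftildeZ} is identical, with $\Lambda$ replaced by $\lambda_{z_1}(\cS)\star\cdots\star\lambda_{z_{n-k}}(\cS)$ and \eqref{E:part-fun} replaced by \eqref{E:dual-part-fun}. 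The only point needing care is that $\mathcal{I}^\nu_q$ lives in the completed ring and the pairing takes values in $\K_T(\pt)[\![q]\!]$, so all identities are identities of formal power series. Since $Z$ is a polynomial in $q$ by the graphical calculus, no issue arises.

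For the final claim, take $\lambda=\emptyset$. Then $\cO_\emptyset=[\cO_X]$ is the unit of $\QK_T$, so $Z_\emptyset^\nu$ is simply the coefficient of $\cO_\nu$ in $\Lambda$. By \Cref{lemma:noq}, the Schubert expansion of $\Lambda$, and likewise of the dual product of $\lambda_{z}(\cS)$'s, has $q$-independent coefficients, so both partition functions are independent of $q$ when $\lambda=\emptyset$.

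I expect the main obstacle to be justifying that $\cO_\emptyset$ is the identity for the quantum product, since this is the one step not formally recorded earlier. I would cite the string equation for K-theoretic GW invariants (Givental--Lee), which gives $\frob{\cO_X\star a,c}=\frob{a,c}$. Alternatively, I would bypass it with the graphical calculus: for $J_\emptyset=0^k1^{n-k}$ the argument in the proof of \Cref{lemma:noq} shows that no $1$-strand crosses the cylinder boundary in $k$ rows, so no configuration has $d>0$.
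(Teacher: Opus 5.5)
Your proposal is correct and matches the paper's proof. Like the paper, you extract the coefficient of $\cO_\nu$ by pairing with $\mathcal{I}^\nu_q$, move $\cO_\lambda$ across using the Frobenius property, and for $\lambda=\emptyset$ use $\cO_\emptyset=1$ together with \Cref{lemma:noq}; the paper treats $\cO_\emptyset=1$ as standard, so your extra justification of the unit property is optional.
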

\begin{proof} This follows from the definitions in \cref{E:part-fun} and
\cref{E:dual-part-fun}, the Frobenius property of the QK pairing, and that of the quantum ideal sheaves. The independence of $q$ follows 
from \Cref{lemma:noq}, using that $\cO_\emptyset =1$.
\end{proof}
\begin{remark} In what follows we use the class
$\lambda_y(\cQ^\vee)$ and the osculating walkers lattice model, 
but we might just as well use $\lambda_y(\cS)$, and the dual model
of vicious walkers. We leave the details to the reader. 
\end{remark}
The next result relates partition functions to QK structure constants.
\begin{thm}\label{prop:structure-constants} The partition function ${Z}_\lambda^\nu$ satisfies
\[{Z}_\lambda^\nu(z_1, \ldots, z_k|\ve;q) = \sum_{\mu,d} A_{\mu}(z_1, \ldots, z_k|\ve) q^d C_{\lambda,\mu}^{\nu,d}  \]
where $A_{\mu}(z|\ve)$ is defined in 
\cref{Amu}. In particular, 
\[ A_\nu(z|\ve) = Z_\emptyset^\nu(z_1, \ldots, z_k|\ve ;q) \/. \]
\end{thm}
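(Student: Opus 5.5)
We will start from the geometric description \cref{defZ} of the partition function and expand the class $\Lambda(z):=\lambda_{z_1}(\cQ^\vee) \star \cdots \star \lambda_{z_k}(\cQ^\vee)$ in the Schubert basis. By \Cref{lemma:noq} the coefficients of this expansion are independent of $q$. Part (c) of \Cref{prop:expansion-ly} then identifies them:
\[ \Lambda(z)=\sum_\mu A_\mu(z|\ve)\,\cO_\mu , \]
with $A_\mu$ as in \cref{Amu}. The only care needed is the $\ve$ versus $\ve_{rev}$ convention. Because of the definition of $A_\mu$ with reversed equivariant parameters, the coefficient in \cref{E:lambda-to-Olower} is precisely the polynomial the statement denotes $A_\mu(z|\ve)$. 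I would double-check this by comparing with the formula $A_\mu(z|\ve)=\langle \Lambda(z),\mathcal{I}^\mu\rangle$ used in the proof of \Cref{cor:Amu=I}.

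Next we substitute this expansion into the pairing. By $\K_T(\pt)[q]$-bilinearity of $(\!(\cdot,\cdot)\!)$,
\[ Z_\lambda^\nu=\sum_\mu A_\mu(z|\ve)\,\frob{\cO_\lambda\star\mathcal{I}^\nu_q,\cO_\mu}. \]
We then apply the Frobenius property and commutativity to rewrite each pairing as $\frob{\cO_\lambda\star\cO_\mu,\mathcal{I}^\nu_q}$. Expanding $\cO_\lambda\star\cO_\mu=\sum_{\nu',d}C_{\lambda,\mu}^{\nu',d}q^d\cO_{\nu'}$ and using the duality \cref{E:qideals}, this pairing equals $\sum_d C_{\lambda,\mu}^{\nu,d}q^d$. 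Substituting back gives the displayed identity. The finiteness of QK multiplication \cite{BCMP:qkfin} guarantees that all sums involved are finite.

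For the special case $\lambda=\emptyset$, we use $\cO_\emptyset=1$. Then $C_{\emptyset,\mu}^{\nu,d}=\delta_{\mu,\nu}\delta_{d,0}$, and the identity reduces to $Z_\emptyset^\nu=A_\nu(z|\ve)$. Equivalently, it is the coefficient of $\cO_\nu$ in $\Lambda(z)$, consistent with \cref{E:part-fun}.

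No step is genuinely hard. The main point requiring attention is bookkeeping. The paper treats $Z$ both as a matrix coefficient via \cref{E:part-fun} and as a pairing via \cref{defZ}, and the $A_\mu$ in the expansion must carry the correct ordering of $\ve$. I would verify this against \Cref{ex:Gr24} for $\Gr(2,4)$ as a sanity check.
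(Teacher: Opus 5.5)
Your proposal is correct and follows the paper's proof essentially step for step. You expand $\lambda_{z_1}(\cQ^\vee)\star\cdots\star\lambda_{z_k}(\cQ^\vee)$ in the $\cO_\mu$ basis via \Cref{prop:expansion-ly}(c), use bilinearity and the Frobenius property to turn each pairing into $\frob{\cO_\lambda\star\cO_\mu,\mathcal{I}^\nu_q}=\sum_d q^d C_{\lambda,\mu}^{\nu,d}$, and specialize to $\lambda=\emptyset$ using $C_{\emptyset,\mu}^{\nu,d}=\delta_{d,0}\delta_{\mu,\nu}$. Your reading of the $\ve$ versus $\ve_{rev}$ notation is also right: applying $w_0$ to part (b) yields exactly the polynomial defined in \cref{Amu}, which the theorem calls $A_\mu(z|\ve)$.
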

\begin{proof} Combining the definition of $Z_\lambda^\nu$ from
\Cref{defZ}, and \Cref{prop:expansion-ly} we obtain that
\[\begin{split} Z^\nu_\lambda(z_1,\ldots,z_{k}|\ve;q)& =\frob{ \cO_\lambda \star \mathcal{I}_q^\nu, \lambda_{z_1}(\cQ^\vee) \star \ldots \star \lambda_{z_{k}}(\cQ^\vee)} \\ & =
\frob{\cO_\lambda \star \mathcal{I}_q^\nu, \sum_{\mu\subset (n-k)^k} A_{\mu}(z_1,\ldots,z_k|\ve)\;\mc{O}_\mu} \\ & =
\sum_{\mu\subset (n-k)^k} A_{\mu}(z_1,\ldots,z_k|\ve)\;\frob{\cO_\lambda \star \mathcal{I}_q^\nu,\mc{O}_\mu} \\ & =
\sum_{\mu\subset (n-k)^k} A_{\mu}(z_1,\ldots,z_k|\ve)\;\frob{\cO_\lambda \star\cO_\mu, \mathcal{I}_q^\nu}\\
& = \sum_{\mu\subset (n-k)^k} \sum_{d \ge 0} A_{\mu}(z_1,\ldots,z_k|\ve)\;q^d C_{\lambda, \mu}^{\nu,d}\quad \/.
\end{split}
\]
Finally, the formula for $A_\nu$ follows because $C_{\emptyset,\mu}^{\nu,d} = \delta_{d,0}\cdot \delta_{\mu,\nu}$.\end{proof}
We record the following geometric interpretations of the symmetric polynomial giving the partition function.
\begin{cor}\label{cor:part-as-class} The following hold:

(a) Regard the variables $-z_i$ as K-theoretic Chern roots of $\cS$,
i.e., $\prod (1-yz_i) = \lambda_y(\cS)$. Then the partition function $Z_\lambda^\nu(z_1, \ldots, z_k|\ve;q)$ is a polynomial representative for the class
\[ \sum_{\mu,d} C_{\lambda,\mu}^{\nu,d}q^d \mathcal{I}^\mu \/.\]

(b) When specializing $z \mapsto - \ve_\mu$, 
\[ Z_\lambda^\nu(-\ve_\mu|\ve;q)= \frob{\cO_\lambda \star \mathcal{I}^\nu_q, \bfe_\mu} \/. \]
In particular, if one specializes $q=0$,
\[ Z_\lambda^\nu(z_1, \ldots, z_k|\ve;0) = \cO_\lambda \cdot \mathcal{I}^\nu \/. \]

(c) For any partitions $\lambda, \mu$, 
\[ \cO_\lambda \star \bfe_\mu = \sum_\nu Z_\lambda^\nu(-\ve_\mu|\ve;q) \cO_\nu \/.\]
In other words, the partition function records the structure constants in the multiplication
of fixed point classes by Schubert classes. 
\end{cor}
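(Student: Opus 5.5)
The plan is to combine \Cref{prop:structure-constants} with \Cref{cor:Amu=I} and \Cref{lemma:loc-ideal}, reading each part off the pairing formula \cref{defZ}.

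For (a), \Cref{prop:structure-constants} gives $Z_\lambda^\nu(z|\ve;q)=\sum_{\mu,d}A_\mu(z|\ve)q^dC_{\lambda,\mu}^{\nu,d}$. Under the Chern-root identification, \Cref{cor:Amu=I}(a) says each $A_\mu(z|\ve)$ represents $\mathcal{I}^\mu$. The coefficients $C_{\lambda,\mu}^{\nu,d}\in\K_T(\pt)$ are scalars, so linearity gives the class $\sum_{\mu,d}C^{\nu,d}_{\lambda,\mu}q^d\mathcal{I}^\mu$. This lives in $\K_T(X)[q]$ because the quantum product is finite.

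For (b), start from \cref{defZ} and specialize $z=-\ve_\mu$. The second entry of the pairing is a polynomial in $z$ with class coefficients that, by \Cref{lemma:noq}, do not involve $q$. By \Cref{lemma:loc-ideal} it therefore specializes to $\bfe_\mu$. Since the pairing is bilinear over $\K_T(\pt)[q]$, specialization commutes with it, giving $Z_\lambda^\nu(-\ve_\mu|\ve;q)=\frob{\cO_\lambda\star\mathcal{I}^\nu_q,\bfe_\mu}$.

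For the $q=0$ statement, set $q=0$ in (a). Then $\star$ becomes $\cdot$, $\mathcal{I}^\nu_q$ becomes $\mathcal{I}^\nu$, and $C^{\nu,0}_{\lambda,\mu}$ are the classical structure constants. So $Z_\lambda^\nu(z|\ve;0)$ represents $\sum_\mu C^{\nu,0}_{\lambda,\mu}\mathcal{I}^\mu$. This must equal $\cO_\lambda\cdot\mathcal{I}^\nu$, which I would check by pairing both sides against $\cO_\mu$.
\begin{itemize}
\item Pairing the sum with $\cO_\mu$ gives $C^{\nu,0}_{\lambda,\mu}$, using $\langle\cO_\mu,\mathcal{I}^{\mu'}\rangle=\delta_{\mu,\mu'}$.
\item Pairing $\cO_\lambda\cdot\mathcal{I}^\nu$ with $\cO_\mu$ gives $\langle\cO_\lambda\cdot\cO_\mu,\mathcal{I}^\nu\rangle$, which is also $C^{\nu,0}_{\lambda,\mu}$ by the same duality.
\end{itemize}
Nondegeneracy of the pairing then identifies the two classes. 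Alternatively, one can localize: by (b) at $q=0$ both sides have the same value at every $\bfe_\mu$, namely $\langle\cO_\lambda\cdot\mathcal{I}^\nu,\bfe_\mu\rangle$, and localization is injective.

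For (c), expand $\cO_\lambda\star\bfe_\mu=\sum_\nu a_\nu\cO_\nu$. Pairing with $\mathcal{I}^\nu_q$ and using \cref{E:qideals} gives $a_\nu=\frob{\cO_\lambda\star\bfe_\mu,\mathcal{I}^\nu_q}$. The Frobenius property and symmetry of the pairing turn this into $\frob{\cO_\lambda\star\mathcal{I}^\nu_q,\bfe_\mu}$, which equals $Z_\lambda^\nu(-\ve_\mu|\ve;q)$ by (b).

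The main obstacle is the bookkeeping in (b). One must make sure the specialization $z=-\ve_\mu$ is legitimate inside the quantum pairing. This rests on the $q$-independence from \Cref{lemma:noq}: it makes the quantum product of the $\lambda_{z_i}(\cQ^\vee)$ equal to the classical one before specializing, so \Cref{lemma:loc-ideal} applies verbatim.
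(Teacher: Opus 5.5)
Your proposal is correct and follows the paper's own argument: (a) comes from combining \Cref{prop:structure-constants} with \Cref{cor:Amu=I}, (b) from specializing via \Cref{lemma:loc-ideal} inside \cref{defZ}, and (c) from the Frobenius property of the pairing. For the $q=0$ statement, your primary argument pairs both sides against the dual bases $\{\cO_\mu\}$ and $\{\mathcal{I}^\mu\}$ instead of localizing; this is only a minor variant, and your alternative argument is exactly the paper's appeal to injectivity of the localization map.
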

\begin{proof} Part (a) follows from combining \Cref{cor:Amu=I} with 
\Cref{prop:structure-constants}.
The first half of part (b) follows from the specialization in \Cref{lemma:loc-ideal}. The specialization 
at $q=0$ follows because the quantum ideal sheaf $\mathcal{I}_{q=0}^\nu$ 
specializes to the classical ideal sheaf $\mathcal{I}^\nu$, the quantum pairing specializes to the classical
pairing, and then one uses again the injectivity of the localization map in equivariant K-theory.
Finally, (c) follows from the Frobenius
property of the pairing.\end{proof}

\subsection{Two formulae} Next we prove two formulae
for the structure constants $C_{\lambda,\mu}^{\nu,d}$. Both
rely on the specialization $z \mapsto - \ve_\lambda$, 
but the first 
uses the Frobenius structure and localization formula, while 
the second is a determinantal formula
arising from solving a non-homogeneous system of linear 
equations given by an upper triangular matrix. Set
\[ C_{\lambda,\mu}^\nu:= \sum_{d} q^d C_{\lambda, \mu}^{\nu,d} \quad \in \K_T(\pt)[q] \/.\]

\subsubsection{} From the Frobenius property of the QK pairing and 
the definition of the quantum ideal sheaves it follows that 
\[ C_{\lambda,\mu}^\nu= (\!(\cO_\lambda \star \cO_\mu, \mathcal{I}^\nu_q )\!) = (\!(\cO_\lambda \star\mathcal{I}^\nu_q , \cO_\mu )\!) \]
By localization 
\[ \cO_\mu= \sum_\eta \frac{1}{\Eu_\eta} \langle \cO_\mu , \bfe_\eta \rangle \bfe_\eta \]
(with $\Eu_\eta= \lambda_{-1}T^*_{\bfe_\eta}\Gr(k,n)$ denoting the Euler class) thus
\[ C_{\lambda,\mu}^\nu= \sum_\eta \frac{1}{\Eu_\eta} \langle \cO_\mu , \bfe_\eta \rangle
(\!(\cO_\lambda \star\mathcal{I}^\nu_q , \bfe_\eta )\!)
\/. \] 
Combined with \Cref{cor:part-as-class} this implies:
\begin{thm}\label{thm:alg1} The QK structure constants are given by the following equality:
\begin{equation}\label{E:alg1}
 C_{\lambda,\mu}^\nu= \sum_\eta \frac{1}{\Eu_\eta} \langle \cO_\mu , \bfe_\eta \rangle
 Z_\lambda^\nu(-\ve_\eta|\ve;q) \/. \end{equation}
 \end{thm}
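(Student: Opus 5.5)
The plan is to make rigorous the computation sketched right before the statement. It has three steps: rewrite the structure constant as a pairing, expand $\cO_\mu$ by localization, and identify each resulting pairing with a specialization of $Z_\lambda^\nu$ via \Cref{cor:part-as-class}(b).

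First, write $C^\nu_{\lambda,\mu}$ as a pairing. Apply the definition \cref{E:qKmetric} of $\star$ and the duality \cref{E:qideals}. Pairing the expansion $\cO_\lambda\star\cO_\mu=\sum_{\nu',d} C^{\nu',d}_{\lambda,\mu}q^d\cO_{\nu'}$ with $\mathcal{I}^\nu_q$ gives
\[C^\nu_{\lambda,\mu}=\frob{\cO_\lambda\star\cO_\mu,\mathcal{I}^\nu_q}.\]
Finiteness of QK products \cite{BCMP:qkfin} guarantees that this is a polynomial in $q$. Commutativity and the Frobenius property of $\star$ then rewrite this as $\frob{\cO_\lambda\star\mathcal{I}^\nu_q,\cO_\mu}$.

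Second, expand $\cO_\mu$ over fixed points. Work in the localized ring $\K_T(\Gr(k,n))\otimes\mathrm{Frac}(\K_T(\pt))$. Since $\langle\bfe_\eta,\bfe_{\eta'}\rangle=\Eu_\eta\delta_{\eta\eta'}$ and the fixed point classes form a basis after localization, we have
\[\cO_\mu=\sum_\eta \Eu_\eta^{-1}\langle\cO_\mu,\bfe_\eta\rangle\bfe_\eta.\]
Extend the QK pairing $\K_T(\pt)[\![q]\!]$-bilinearly to coefficients in the fraction field. Bilinearity then gives
\[C^\nu_{\lambda,\mu}=\sum_\eta \Eu_\eta^{-1}\langle\cO_\mu,\bfe_\eta\rangle\frob{\cO_\lambda\star\mathcal{I}^\nu_q,\bfe_\eta}.\]

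Third, identify each remaining pairing with the specialized partition function. By \Cref{cor:part-as-class}(b),
\[\frob{\cO_\lambda\star\mathcal{I}^\nu_q,\bfe_\eta}=Z_\lambda^\nu(-\ve_\eta|\ve;q).\]
That identity in turn rests on \Cref{lemma:loc-ideal}: the product $\lambda_{z_1}(\cQ^\vee)\star\cdots\star\lambda_{z_k}(\cQ^\vee)$ specializes at $z=-\ve_\eta$ to $\bfe_\eta$. Substituting this into the previous display gives \cref{E:alg1}.

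The main obstacle is making the specialization step legitimate. The product of $\lambda_{z_i}(\cQ^\vee)$ is a priori a quantum product. I would check that by \Cref{lemma:noq} its Schubert expansion coefficients are $q$-independent polynomials in $z$, so substituting $z=-\ve_\eta$ commutes with the quantum pairing. \Cref{defZ} then gives $Z^\nu_\lambda(-\ve_\eta|\ve;q)=\frob{\cO_\lambda\star\mathcal{I}^\nu_q,\bfe_\eta}$ directly. The second point is that the localization expansion involves denominators $\Eu_\eta$. This is harmless: the pairing is linear over the base, and the left side is known to lie in $\K_T(\pt)[q]$, so the rational expression on the right automatically simplifies to a Laurent polynomial.
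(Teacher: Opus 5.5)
Your proposal is correct and follows the paper's own argument. You rewrite $C_{\lambda,\mu}^\nu$ via the Frobenius property as $(\!(\cO_\lambda\star\mathcal{I}^\nu_q,\cO_\mu)\!)$, expand $\cO_\mu$ over fixed points by localization, and identify each resulting pairing with $Z_\lambda^\nu(-\ve_\eta|\ve;q)$ through \Cref{cor:part-as-class}(b), i.e.\ \Cref{lemma:loc-ideal} combined with the $q$-independence from \Cref{lemma:noq}; your remarks on extending scalars and on why the specialization $z\mapsto-\ve_\eta$ commutes with the pairing only make explicit what the paper leaves implicit.
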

Note that the Euler class is combinatorially explicit (cf. \Cref{classEu}) and so are 
the localization coefficients $\langle \cO_\mu, \bfe_\eta\rangle$ (cf. \Cref{E:locOlambda}). Although both 
$\langle \cO_\mu, \bfe_\eta\rangle$ and $Z_\lambda^\nu(-\ve_\eta|\ve;q)$
are Graham positive in the sense of \cite{anderson.griffeth.miller:Kpos}, the Euler class $\Eu_\eta$ twists that positivity by $w_\eta$, thus destroying the positivity of the global formula. (Graham-positivity is still conjectured to hold \cite{buch.m:qk,BCMP:qkpos}.) 

However, in some particular cases, positivity is preserved. For example, 
take $\cO_{(n-k)^k}=\bfe_{(n-k)^k}$ be the class of the Schubert point.
Then
\[ C_{\lambda,(n-k)^k}^\nu = Z_\lambda^\nu(-\ve_{(n-k)^k}|\ve;q) \]
and from the graphical calculus of the partition function one may check that this is appropriately (Graham) positive.

Start from the equations
\[{Z}_\lambda^\nu(z_1, \ldots, z_k|\ve ;q) = \sum_{\mu,d} A_{\mu}(z_1, \ldots, z_k|\ve) q^d C_{\lambda,\mu}^{\nu,d}  \]
Specialize again $z\mapsto-\ve_\eta$, and by \Cref{cor:Amu=I}(b) we obtain
\begin{equation}\label{E:GW-eqs}{Z}_\lambda^\nu(-\ve_\eta|\ve;q) = \sum_{\mu,d} \langle\mathcal{I}^\mu, e_\eta\rangle q^d C_{\lambda, \mu}^{\nu,d} \end{equation}
Since the localization $\langle\mathcal{I}^\mu, e_\eta\rangle=0$ unless 
$\eta \subset \mu$, for each triple $(\lambda,\nu,\eta)$ we 
obtain the system of linear equations:
\begin{equation}\label{E:GW-eqs-v2}{Z}_\lambda^\nu(-\ve_\eta|\ve;q) = \sum_{\eta \subset \mu} \langle\mathcal{I}^\mu, \bfe_\eta\rangle C_{\lambda, \mu}^{\nu} \end{equation}
regarding the QK structure constants as variables 
$C_{\lambda,\mu}^\nu:= \sum_{d} q^d C_{\lambda, \mu}^{\nu,d}$ 
where $\mu \supset \eta$. 
In other words, for each fixed pair $(\lambda, \nu)$ 
we have ${n \choose k}$ equations (accounting for the
choices of $\eta$), depending on ${n \choose k}$ variables 
$C_{\lambda, \mu}^\nu$ (accounting for the choices of $\mu$), and 
with upper triangular (localization) matrix 
\[ L:= \bigl( 
\langle\mathcal{I}^\mu, \bfe_\eta\rangle\bigr)_{\mu,\eta} \/.\]
The diagonal entries are 
$\langle\mathcal{I}^\eta, \bfe_\eta\rangle = \langle\cO^\eta, \bfe_\eta\rangle = 
\lambda_{-1}(N^*_{\bfe_\eta}(X^\eta))$,
where the last term is the Euler class of the conormal bundle of $X^\eta$
at the (smooth) point $\bfe_\eta$. In terms of Grothendieck polynomials  
\begin{equation}\label{E:Groth-loc}\langle\cO^\eta, \bfe_\eta\rangle= 
w_0.\langle\cO_{\eta^\vee}, \bfe_{\eta^\vee}\rangle = 
w_0.G_{\eta^\vee}(1-\ve_{\eta^\vee}|1-\ve^{-1}_1,\ldots,1-\ve^{-1}_n)
\end{equation}
In particular, the determinant of this matrix is the 
product of the Euler classes 
$\lambda_{-1}(N^*_{\bfe_\eta}(X^\eta))$, 
therefore it is non-zero. This system may be solved using Cramer's rule, giving 
the following determinantal formula for the QK structure constants:
\begin{thm}\label{thm:Cramer} The generating function of QK structure constants $C_{\lambda,\mu}^\nu$ is given by the quotient of determinants:
\begin{equation}\label{E:Cramer-form} C_{\lambda,\mu}^\nu = \det L(\mu)/ \prod_\eta \langle\cO^\eta, \bfe_\eta\rangle \end{equation}
where the matrix $L(\mu)$ is obtained from the localization 
matrix $L=\bigl( 
\langle\mathcal{I}^\mu, \bfe_\eta\rangle \bigr)$
by replacing its $\mu$-column by the vector ${Z}_\lambda^\nu(-\ve_\eta|\ve;q)_{\eta \subset (n-k)^k}$. Here the rows and columns are arranged with respect to a fixed total order of partitions, extending the partial order given by inclusion; for example one may use the lexicographic order.
\end{thm}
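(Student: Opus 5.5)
The proof is an application of Cramer's rule to the linear system \cref{E:GW-eqs-v2}, so the plan is mostly about checking that this system is square, triangular and invertible. Fix the pair $(\lambda,\nu)$. By \Cref{prop:structure-constants}, for every $\mu$ the polynomial identity
\[ Z_\lambda^\nu(z|\ve;q)=\sum_\mu A_\mu(z|\ve)\, C_{\lambda,\mu}^\nu \]
holds in $\K_T(\pt)[q][z]$. Specialize $z\mapsto -\ve_\eta$ for each of the ${n\choose k}$ partitions $\eta$. By \Cref{cor:Amu=I}(b) this gives the square system
\[ Z_\lambda^\nu(-\ve_\eta|\ve;q)=\sum_\mu L_{\eta,\mu}\, C_{\lambda,\mu}^\nu, \qquad L_{\eta,\mu}=\langle \mathcal{I}^\mu,\bfe_\eta\rangle . \]
The unknowns are the vector $(C_{\lambda,\mu}^\nu)_\mu$ and the right-hand side is the vector $(Z_\lambda^\nu(-\ve_\eta|\ve;q))_\eta$. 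The matrix $L$ has entries in $\K_T(\pt)$, and the whole system lives over $\K_T(\pt)[q]$, or over its fraction field.

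First I will show that $L$ is triangular with respect to the chosen total order. By \Cref{cor:Amu=I}(b), $L_{\eta,\mu}=0$ unless $\eta\subset\mu$. Order rows and columns by a linear extension of inclusion, for instance the order by $|\cdot|$ refined lexicographically. Then every nonzero entry lies on or above the diagonal, so $L$ is upper triangular and
\[ \det L=\prod_\eta \langle \mathcal{I}^\eta,\bfe_\eta\rangle . \]

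Next I will identify the diagonal entries with $\langle \cO^\eta,\bfe_\eta\rangle$. By the M\"obius inversion \cref{E:ItoO}, $\mathcal{I}^\eta=\cO^\eta+\sum_{\rho\subsetneq\eta}\pm\cO^\rho$. Since $X^\rho\subset X^\eta$ does not contain $\bfe_\eta$ for $\rho\subsetneq\eta$, we have $\langle\cO^\rho,\bfe_\eta\rangle=0$ for those $\rho$. Hence $\langle \mathcal{I}^\eta,\bfe_\eta\rangle=\langle\cO^\eta,\bfe_\eta\rangle$. Because $\bfe_\eta$ is a smooth point of $X^\eta$, this equals $\lambda_{-1}$ of the conormal space, $\prod(1-\chi)$ over the nonzero $T$-weights $\chi$ of that conormal space. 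This product is nonzero in the domain $\K_T(\pt)$.

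Finally I will apply Cramer's rule. Since $\det L\neq 0$ and $\K_T(\pt)[q]$ is a domain, the system has the unique solution $C_{\lambda,\mu}^\nu=\det L(\mu)/\det L$, where $L(\mu)$ is $L$ with its $\mu$-column replaced by the vector $(Z_\lambda^\nu(-\ve_\eta|\ve;q))_\eta$. This is \cref{E:Cramer-form}. Note that the structure constants are a priori known to solve the system by \Cref{prop:structure-constants}, so uniqueness identifies them with the Cramer solution.

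The step needing the most care is the triangularity together with the nonvanishing of the diagonal. In particular I must check the direction of the support statement, namely that $\bfe_\eta$ lies in the support of $\mathcal{I}^\mu$ only if $\eta\subset\mu$. This is what makes the inclusion-compatible order give an upper triangular matrix.
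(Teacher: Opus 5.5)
Your proposal is correct and matches the paper's argument. Like the paper, you specialize the identity of \Cref{prop:structure-constants} at $z=-\ve_\eta$ and use \Cref{cor:Amu=I}(b) to obtain the upper triangular localization matrix $L$, whose diagonal entries $\langle\mathcal{I}^\eta,\bfe_\eta\rangle=\langle\cO^\eta,\bfe_\eta\rangle$ are nonzero conormal Euler classes, and then apply Cramer's rule. Your M\"obius-inversion justification of the diagonal identity and your uniqueness remark over the domain $\K_T(\pt)[q]$ are correct elaborations of steps the paper only asserts.
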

\begin{remark}\rm 
One may calculate the inverse of the localization matrix
$L$ by  
\[ L^{-1} = \bigl( 
\langle\mathcal{I}^\mu, \bfe_\eta\rangle \bigr)^{-1} = \bigl( 
\frac{1}{\Eu_\eta}\langle \cO_\mu,\bfe_\eta\rangle \bigr) \/.
\]
If we use this and the system of equations from \cref{E:GW-eqs-v2}, one recovers the equalities from \cref{E:alg1}.
\end{remark}

Examples of calculations of structure constants using this result are given next.
\begin{example}\rm We illustrate the formula in \Cref{E:Cramer-form}
for $\Gr(2,4)$ and $\lambda=\nu=(1,0)$. In \Cref{ex:Gr24} we calculated
the partition function $Z_\lambda^\nu$.
The factors 
giving the denominator
in the formula \eqref{E:Cramer-form} are recorded in the following table:
$$
\begin{array}{cc}
\eta & \langle\cO^{\eta}, \bfe_{\eta}\rangle\\
\hline
 (2,2) & 1 \\
 (2,1) & 1-\ve_2/\ve_3 \\
 (2,0) & (1-\ve_1/\ve_2)(1-\ve_1/\ve_3)\\
 (1,1) & (1-\ve_2/\ve_4)(1-\ve_3/\ve_4)\\
 (1,0) & (1-\ve_1/\ve_2)(1-\ve_1/\ve_4)(1-\ve_3/\ve_4)\\
 (0,0) & (1-\ve_1/\ve_3)(1-\ve_2/\ve_3)(1-\ve_1/\ve_4)(1-\ve_2/\ve_4) 
\end{array}
$$
The ideal sheaves $\mathcal{I}^\nu$ are related to the structure sheaves 
$\cO^\nu$ by the M{\"o}bius inversion (\Cref{E:ItoO}). Then one may calculate the Schubert
structure constants $C_{\lambda,\mu}^{\nu,d}$ using \Cref{thm:Cramer}:
\[
\begin{array}{ccc}
\mu & C_{\lambda\mu}^\nu & \lim_{\ve_i\to 1}C_{\lambda\mu}^\nu\\
\hline
 (2,2) & q\;\ve_3 \ve_4/\ve_1\ve_2& q \\
 (2,1) & -q\;\ve_4/\ve_1&-q \\
 (2) & 0 & 0\\
 (1,1) & 0 & 0 \\
 (1,0) & 1-\ve_3/\ve_2&0 \\
 (0,0) & 1&1
\end{array}
\]
Note that in all cases the structure constants are Graham positive,
as conjectured in \cite{BCMP:qkpos}, and proved in {\em loc.~cit.} 
in the non-equivariant case.\end{example}

We end this section by noting that there are also other methods to calculate the structure constants (see, e.g., \cite{buch.m:qk,BCMP:qkchev}) 
but we regard the formulae in this section as a `proof of concept',
using methods which might extend to more general situations.

%%%%%%%%%%%%%%%%%%%%%%%%%%%%%%%%%%%%%%%%%%%%%%%%%%%%%%%%%
\section{Cylindric Grothendieck polynomials}\label{sec:cylindric-groth}

The aim of this section is to state an alternative combinatorial formula for the partition function of the lattice model as a sum of certain generalized Grothendieck polynomials. These are an equivariant and K-theoretic generalization of toric Schur functions considered by Postnikov \cite{postnikov:affine} in the context of (non-equivariant) quantum cohomology of $\Gr(k,n)$. Toric Schur functions are a special subset of more general cylindric Schur functions defined by Gessel and Krattenthaler \cite{gessel1997cylindric} as a weighted sum over semi-standard tableaux on a cylinder. Here we will look instead at set-valued tableaux on a cylinder.

This justifies our terminology of {\em cylindric Grothendieck polynomials}. 
To make the connection between cylindric Grothendieck polynomials and cylindric Schur functions explicit, it will be convenient to change conventions and use throughout this section the multiplicative formal group law and its associated substraction (as in \cite{gorbounov2017quantum}):
\begin{equation}\label{FGL}
x\oplus y=x+y+\beta x y\quad\text{and}\quad x\ominus y=\frac{x-y}{1+\beta y}\;,
\end{equation}
where we treat $\beta$ as a formal variable. In addition, we temporarily adjust the weights of the vertices (I) and (II) in \cref{5v} to
\[
x_i\oplus t_j\quad\text{and}\quad 1+\beta\;x_i\oplus t_j,
\]
respectively. Here $i$ is the lattice row number and $j$ the lattice column number of the vertex. Thus, from now on we write the partition function as a function $Z_\lambda^\nu=Z_\lambda^\nu(x_1,\ldots,x_k|t_1,\ldots,t_n;q)$ of the new variables. Specializing $\beta=0$ we recover the lattice models discussed in \cite{korff2014quantum,gorbounov.korff:unpublished,gorbounov.korff.stroppel} describing (equivariant) quantum cohomology and the partition function then simplifies to cylindric or toric Schur functions.

There is another, purely combinatorial, reason to introduce the variable $\beta$: stable Grothendieck polynomials and their generalizations can be expressed as sums over set-valued tableaux, following Buch \cite{Buch2002}. In order to match lattice configurations with set-valued tableaux, we will expand the weight of a single lattice configuration in powers of $\beta$ and then identify each coefficient in this expansion as weights coming from certain set-valued tableaux. Setting at the end
\begin{equation}\label{x2z}
    x_i=1+z_i,\qquad t_i=1-\ve^{-1}_i,\qquad \beta=-1
\end{equation}
we will recover the conventions used in the other sections of this article and in \cite{gorbounov.korff.mihalcea:YB}. 

\subsection{Partitions and cylindric loops}
We need to recall some notions of cylindric partitions (or loops) and tableaux; see \cite{gessel1997cylindric} although here we shall follow the conventions and notation from \cite{postnikov:affine}.

For a partition $\lambda$ with at most $k$ non-zero parts and satisfying $\lambda_1-\lambda_k\leq n-k$ define its {\bf cylindric loop} $\lambda[d]$ with $d\in\Z$ as the doubly infinite sequence
\begin{equation}\label{cyl_loop}
    \lambda[d]:=(\ldots,\underset{d}{\lambda_k+d+n-k},\underset{d+1}{\lambda_1+d},\ldots,\underset{d+k}{\lambda_k+d},\underset{d+k+1}{\lambda_1+d-n+k},\ldots)\;.
\end{equation}
If $d=0$ this sequence describes a lattice path in $\Z\times\Z$ that gives the outline of the Young diagram of $\lambda$ in the interval $[1,k]$, which is then periodically repeated, $\lambda_{i+k}[0]=\lambda_i[0]-n+k$, i.e. we can interpret $\lambda[0]$ to be a lattice path on the cylinder $\Z^2/\Z(k,-n+k)$. This explains the name `cylindric loop'. If $d\neq 0$ the loop is translated by $d$ steps in the direction of the lattice vector $(1,1)$, that is we set $\lambda_i[d]=\lambda_{i-d}[0]+d$; see Figure \ref{fig:cyl_loop} for an example.

Conversely, if we define a cylindric loop $\rho$ as a weakly decreasing sequence of integers satisfying $\rho_{i+k}=\rho_i-n+k$, then those loops that satisfy  $0\le \rho_k\le\ldots\leq\rho_1\le n-k$ are in bijection with partitions whose Young diagram fits into the rectangle $(n-k)^k$.

\begin{figure}[h!]
\centering
\includegraphics[width=.8\textwidth]{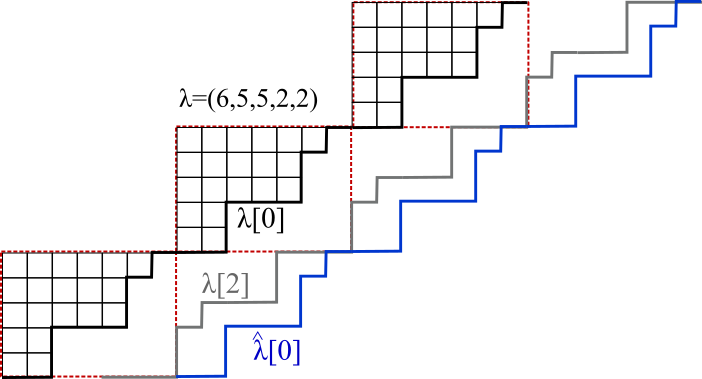} 
\caption{Cylindric loops associated with the partition $\lambda=(6,5,5,2,2)$ for $k=5$ and $n=12$. The blue line indicates the cylindric loop for the partition $\hat\lambda=(n-k+\lambda_1,\ldots,n-k+\lambda_k)$.}\label{fig:cyl_loop}
\end{figure}

Define a {\bf cylindric skew diagram} $\Theta=\theta_1/\theta_2$ as the number of boxes in $\Z\times\Z$ that are enclosed by two cylindric loops, say  $\theta_1$ and $\theta_2$, i.e. the set $\Theta=\{(i,j)\in\Z^2~:~\theta_{2,i}<j\le\theta_{1,i}\}$. 
%see Figure \ref{fig:SVTGr47example} for examples. 
Denote by $|\Theta|=\sum_{i=1}^k(\theta_{1,i}-\theta_{2,i})$ the (finite) number of boxes of the cylindric skew diagram contained in the fundamental strip $[1,k]\times\Z$.
\begin{remark}\rm
    Because of the periodicity condition, we will have in general two choices of representing a cylindric loop or a cylindric skew diagram. Either we restrict to the rows $i=1,\ldots,k$ or to the columns $j=1,\ldots,n-k$. We will usually do the former as we are working in general with $k$ variables.
\end{remark}

A {\bf cylindric set-valued tableau} is a filling of the boxes in rows $1$ to $k$ (or columns $j=1,\ldots,n-k$) with non-empty subsets $S\subset[m]=\{1,\ldots,m\}$, such that the entries weakly increase in rows (left to right) and strictly increase in columns (top to bottom). Moreover, the filling must be compatible with the periodicity condition that the entry in box $(i,j)$ is the same as the entry of the box $(i+rk,j-r(n-k))$ for any $r\in\Z$. Below is the formal definition.
\begin{defn}
    A {\em cylindric set-valued tableau} of shape $\Theta$ is a map $T:\Theta\to 2^{[m]}\backslash\{\emptyset\}$, from $\Theta\subset\Z\times\Z$ to the power set of $[m]$ (excluding the empty set), satisfying
    \begin{equation}\label{cylSVT}
    \max T(i,j)<\min T(i+1,j),\qquad \max T(i,j)\le\min T(i,j+1)
    \end{equation}
    for all adjacent boxes $(i,j),(i+1,j),(i,j+1)\in\Theta$ and
    \begin{equation}\label{cylSVTperiod}
    T(i,j)=T(i+k,j-(n-k))\;.
    \end{equation}
    Given such a tableau $T$ we denote by
    \[
    \#T=\sum_{\substack{(i,j)\in\Theta\\ 1\le i\le k}}\# T(i,j)
    \]
    the total number of entries in the fundamental strip $[1,k]\times\Z\subset\Z\times\Z$.
\end{defn}
If one focuses on the fundamental strip, and drops the condition in \cref{cylSVTperiod}, then one recovers ordinary (non-cylindric) set-valued tableaux. Thus, cylindric set-valued tableaux can be considered as a subset of non-cylindric ones.

We now give an alternative description of (cylindric) set-valued tableaux that will be useful in making the connection with the integrable lattice model. In order to do so, we need to introduce two special families of cylindric skew diagrams first:
\begin{enumerate}
    \item A {\bf cylindric horizontal strip} is defined as a cylindric skew diagram $\Theta$ that has at most one box in each column.

    \item A {\bf cylindric rook strip} is defined as a cylindric skew diagram $\Theta$ that has at most one box in each column {\em and} each row.
\end{enumerate}

Note that the definitions allow for cylindric horizontal and rook strips to be empty. We refer to these as trivial strips.

\begin{lemma}\label{lem:cylSVT}
    Each set-valued cylindric tableau $T$ with entries from $[m]$ and of fixed shape $\Theta=\theta_1/\theta_2$ is in one-to-one correspondence with a sequence $(\tau^{(i)},\rho^{(i)})_{i=0}^m$ of pairs of cylindric loops satisfying
    \begin{itemize}
        \item[(i)] $\rho^{(0)}=\tau^{(0)}=\theta_2$ and $\rho^{(m)}=\tau^{(m)}=\theta_1$;
        \item[(ii)] $\theta_2\subset\rho^{(i)}\subset\tau^{(i)}$ for all $i=1,\ldots,m$;
        \item[(iii)] each $\tau^{(i)}/\rho^{(i)}$ is a cylindric rook strip; 
        \item[(iv)] both, $\tau^{(i)}/\tau^{(i-1)}$ and $\tau^{(i)}/\rho^{(i-1)}$, are cylindric horizontal strips for  $i=1,\ldots,m$.
    \end{itemize}
\end{lemma}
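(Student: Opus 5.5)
The plan is to construct the bijection explicitly, using the standard decomposition of a set-valued tableau by entry value; compare Buch's treatment of ordinary set-valued tableaux in \cite{Buch2002}. Given a cylindric set-valued tableau $T$ of shape $\Theta=\theta_1/\theta_2$, define for each $i=0,\ldots,m$ two cylindric loops:
\[
\tau^{(i)}=\theta_2\cup\{(a,b)\in\Theta:\ \min T(a,b)\le i\},\qquad \rho^{(i)}=\theta_2\cup\{(a,b)\in\Theta:\ \max T(a,b)\le i\}.
\]
Thus $\tau^{(i)}$ is the union of $\theta_2$ with the boxes containing at least one entry $\le i$, and $\rho^{(i)}$ is the union of $\theta_2$ with the boxes all of whose entries are $\le i$.

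The first step is to check that these are cylindric loops. Since $\min T$ and $\max T$ weakly increase along rows and strictly down columns by \cref{cylSVT}, the regions $\{\min T\le i\}$ and $\{\max T\le i\}$ are order ideals of $\Theta$. So $\tau^{(i)},\rho^{(i)}$ are weakly decreasing integer sequences. The periodicity \cref{cylSVTperiod} of $T$ gives the periodicity $\rho_{a+k}=\rho_a-n+k$.

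Properties (i) and (ii) are then immediate. For (i), every box has nonempty entries in $[m]$. For (ii), $\max T\ge \min T$, so $\rho^{(i)}\subset\tau^{(i)}$.

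For (iii), the boxes of $\tau^{(i)}/\rho^{(i)}$ are those with $\min T\le i<\max T$. Suppose two such boxes lie in the same row, say $(a,b)$ and $(a,b')$ with $b<b'$. Then \cref{cylSVT} gives $\max T(a,b)\le\min T(a,b')\le i$, a contradiction. Columns are handled the same way using the strict inequality, and by periodicity it suffices to check this in the fundamental strip.

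For (iv), the boxes of $\tau^{(i)}/\rho^{(i-1)}$ are those with $\max T>i-1$ and $\min T\le i$. Two such boxes stacked in one column, $(a,b)$ above $(a+1,b)$, would force $i\le\max T(a,b)<\min T(a+1,b)\le i$, which is impossible. The strip $\tau^{(i)}/\tau^{(i-1)}\subset\tau^{(i)}/\rho^{(i-1)}$ is then also a horizontal strip.

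For the inverse map, given a sequence $(\tau^{(i)},\rho^{(i)})$ satisfying (i)--(iv), set
\[
T(a,b)=\{i:\ (a,b)\in\tau^{(i)}/\rho^{(i-1)}\}.
\]
One must show that this set is nonempty and agrees with the sets produced by the forward construction. The inclusions $\rho^{(i-1)}\subset\rho^{(i)}$ and $\tau^{(i-1)}\subset\tau^{(i)}$ are implicit in (iv), which requires the differences to be skew shapes. Given these inclusions, $\{i:(a,b)\in\tau^{(i)}\setminus\rho^{(i-1)}\}$ is an interval $[\min,\max]$. However, a set-valued entry need not be an interval; for example $\{1,3\}$ is allowed. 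So this recovery formula is wrong, and I would correct it. The set should be the box's minimum together with the values $i$ for which the box leaves the rook strip, i.e.
\[
T(a,b)=\{i:\ (a,b)\in\tau^{(i)}\setminus\tau^{(i-1)}\}\cup\{i:\ (a,b)\in\rho^{(i)}\setminus\rho^{(i-1)}\}\cup\cdots
\]
A single pair of nested order ideals per value cannot record intermediate entries. This forces a reinterpretation of the data: the pair $(\rho^{(i-1)},\tau^{(i)})$ should be read as ``$\rho^{(i-1)}\subset\tau^{(i)}$ with $\tau^{(i)}/\rho^{(i-1)}$ the boxes receiving entry $i$''. The rook strip $\tau^{(i)}/\rho^{(i)}$ then consists of the boxes containing $i$ that will receive further, larger entries. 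Under this reading, the correct forward definition is
\[
\tau^{(i)}=\theta_2\cup\{\min T\le i\},\qquad \rho^{(i)}=\tau^{(i)}\setminus\{\text{boxes with } i\in T \text{ and } \max T>i\}.
\]
Then $\tau^{(i)}/\rho^{(i-1)}$ is exactly the set of boxes containing $i$.

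With this corrected definition, verifying (ii)--(iv) proceeds by the same two-box inequality arguments as above. The rook-strip condition (iii) uses that two boxes in one row or column both containing $i$ can only be the later one having $\min=i$. The inverse map $T(a,b)=\{i:(a,b)\in\tau^{(i)}/\rho^{(i-1)}\}$ produces exactly the entries of $T$. The tableau inequalities for the resulting $T$ follow from the horizontal-strip and rook-strip conditions, and periodicity is inherited from the loops.

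The main obstacle is pinning down the precise meaning of $\rho^{(i)}$ so that the reconstruction is exact. After that, the remaining checks are the routine two-box arguments on the cylinder.
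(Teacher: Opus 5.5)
Your framework matches the paper's: $\tau^{(i)}=\theta_2\cup\{\min T\le i\}$, with the inverse placing the letter $i$ in the boxes of $\tau^{(i)}/\rho^{(i-1)}$. You were also right to discard $\rho^{(i)}=\theta_2\cup\{\max T\le i\}$. However, your corrected $\rho^{(i)}$ is off by one, and this is a genuine gap.

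\textbf{What goes wrong.} You remove from $\tau^{(i)}$ the boxes that contain $i$ and have $\max T>i$. Over all $i$, this records for each box only $T(a,b)\setminus\{\max T(a,b)\}$, so the maximum is lost. In particular, the claimed identity ``$\tau^{(i)}/\rho^{(i-1)}$ is exactly the set of boxes containing $i$'' fails.

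Concretely, take a one-box shape and $m=3$. For both $T=\{1,3\}$ and $T=\{1,2\}$, your recipe removes the box from $\rho^{(1)}$ and from no other $\rho^{(i)}$. The two tableaux therefore give the same sequence, and your inverse returns $\{1,2\}$ for both. So your map is not injective.

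\textbf{The correct definition.} Your own requirement is that $\rho^{(i-1)}\subset\tau^{(i)}$ and that $\tau^{(i)}\setminus\rho^{(i-1)}$ be the set of boxes containing $i$. This determines $\rho$ uniquely:
\[
\rho^{(i)}=\tau^{(i)}\setminus\{(a,b)\in\Theta:\ i+1\in T(a,b)\}.
\]
Thus the rook strip $\tau^{(i)}/\rho^{(i)}$ consists of the already occupied boxes that also receive the \emph{next} letter $i+1$, not boxes containing $i$ that later receive some larger letter. This is exactly the paper's definition.

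Your remark that $\rho^{(i-1)}\subset\rho^{(i)}$ is implicit in (iv) is also false. Condition (iv) only gives $\rho^{(i-1)}\subset\tau^{(i)}$. Under the correct bijection the $\rho^{(i)}$ are not monotone: for $T=\{1,3\}$ the box lies in $\rho^{(1)}$ but not in $\rho^{(2)}$. So you must not use that inclusion anywhere.

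\textbf{Completing the argument.} With the correct $\rho^{(i)}$, your two-box arguments go through.
\begin{enumerate}
\item[(1)] A box of $\tau^{(i)}$ containing $i+1$ has no box of $\tau^{(i)}$ to its right or below it, since such boxes have $\min T\ge i+1$. So the removed boxes are outer corners. Hence $\rho^{(i)}$ is a cylindric loop and $\tau^{(i)}/\rho^{(i)}$ is a rook strip, giving (ii) and (iii).
\item[(2)] Condition (iv) is just column strictness applied to the letter $i$.
\item[(3)] For the inverse, each box receives a letter at the step where it first enters $\tau$, because $\rho^{(i-1)}\subset\tau^{(i-1)}$. So every entry $T(a,b)$ is nonempty.
\item[(4)] Rows: suppose $i=\max T(a,b)>j=\min T(a,b+1)$. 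Then both boxes lie in $\tau^{(i-1)}$. Since the order ideal $\rho^{(i-1)}$ misses $(a,b)$, it also misses $(a,b+1)$. Both boxes then lie in the same row of $\tau^{(i-1)}/\rho^{(i-1)}$, contradicting (iii).
\item[(5)] Columns: suppose $i=\max T(a,b)\ge j=\min T(a+1,b)$. The case $i=j$ puts two boxes in one column of $\tau^{(i)}/\rho^{(i-1)}$, contradicting (iv). The case $i>j$ contradicts (iii) exactly as for rows.
\item[(6)] Finally, the boxes of $\tau^{(i)}$ containing $i+1$ are exactly $\tau^{(i)}\setminus\rho^{(i)}$, because $\rho^{(i)}\subset\tau^{(i)}\subset\tau^{(i+1)}$. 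So the forward map returns the original sequence.
\end{enumerate}
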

Condition (i), (ii) ensure that none of the cylindric rook strips $\rho^{(i)}$ defines boxes that lie outside the inner boundary loop $\theta_2$. If we would drop these conditions, then the resulting sequences $(\tau^{(i)},\rho^{(i)})_{i=0}^m$ describe cylindric skew shapes $\Theta'$ where the inner boundary can differ by removing a rook strip from $\theta_2$ and we then obtain precisely all cylindric skew shapes entering the definition of the modified cylindric skew Grothendieck polynomial $G_{\tau^{(k)}//\tau^{(0)}}$ defined in \cref{mcsG} below. We will use this fact when computing the partition function of the lattice model in terms of cylindric set-valued tableaux.
\begin{example}\rm
    Consider the set-valued tableau
    \[
    \ytableausetup{notabloids}
\begin{ytableau}
\none & \none & 1 & 1,2 \\
\none & 1 & 2,3 \\
3 & 3
\end{ytableau}
    \]
    which corresponds to the sequence
\begin{multline*}
(\tau^{(0)},\rho^{(0)})=((2,1),(2,1)),\;
(\tau^{(1)},\rho^{(1)})=((4,2,0),(3,2,0)),\;\\
(\tau^{(2)},\rho^{(2)})=((4,3,0),(4,2,0)),\;
(\tau^{(3)},\rho^{(3)})=((4,3,2),(4,3,2))\;.
\end{multline*}
One recovers the tableau from the sequence by putting the letter $i$ in the boxes of the cylindric skew shape $\tau^{(i)}/\rho^{(i-1)}$ for $i=1,2,3$.
\end{example}

\begin{proof}[Proof of \Cref{lem:cylSVT}]
We claim that given a tableau $T$ the corresponding sequence of cylindric loops $(\tau^{(i)},\rho^{(i)})_{i=0}^m$ is obtained as follows: for each $1\le i<m$ the cylindric skew shape $\tau^{(i)}/\theta_2$ are all boxes where the smallest entry is $\le i$ and $\rho^{(i)}/\theta_2$ is then obtained by removing from this set those boxes which also contain $i+1$. Conversely, given a sequence create a tableau $T$ by placing for each $i>0$ the letter $i$ in the boxes of $\tau^{(i)}/\rho^{(i-1)}$.

    Let us start with the simplest case when the entries in the tableau are all singletons, single element sets, in which case $\rho^{(i)}=\tau^{(i)}$ for all $i=0,\ldots,m$. (These are precisely the tableaux which still have nonzero weight when setting $\beta=0$.)  It is a well-known fact that any semi-standard tableau is equivalent to a sequence of horizontal strips, so in the case of $\tau^{(i)}=\rho^{(i)}$  and (non-cylindric) skew shapes there is nothing to prove. 
    
    The extension to cylindric semi-standard tableaux is straightforward, because they can be interpreted as a subset of non-cylindric semi-standard tableaux by restricting to the rows $i=1,\ldots,k$. Because of the periodicity condition in \cref{cylSVTperiod} this determines the tableau and the only difference with the non-cylindric case is that due to a potential overlap of diagrams winding around the cylinder extra constraints are imposed on boxes at the boundary. 
    
    The same argument will apply to set-valued tableaux and, thus, it suffices to look at non-cylindric set-valued tableaux. 
    
    The new aspect we need to prove is that adding additional elements to an already filled box amounts to defining a rook strip. But we claim that this is clear. Namely, for a given tableau $T$ consider the sub-tableau consisting only of those boxes which have entries $\le i$. Because of \cref{cylSVT} the subset of boxes where we can add an additional entry $(i+1)$ are those at the end of a row and at the very bottom of the corresponding column. Hence, boxes containing both, an entry $\le i$ and $(i+1)$, can occur at most once in each row and column. Thus, they form a rook strip $\tau^{(i)}/\rho^{(i)}$. 
    
    The converse is also true: the boxes of any non-empty rook strip must occur at the end of a row and at the bottom of their column, whence the additional entry $(i+1)$ can be added without violating the conditions in \cref{cylSVT}. 

    All entries $(i+1)$ added to new (empty) boxes $\tau^{(i+1)}/\tau^{(i)}$  follow the familiar argument for horizontal strips. Obviously, we cannot add new (empty) boxes in columns where there exist already boxes of the rook strip as this would violate column strictness. Hence, we need to impose that both, $\tau^{(i+1)}/\tau^{(i)}$ and $\tau^{(i+1)}/\rho^{(i)}$ are horizontal strips or, equivalently, that the sets of columns intersected by $\tau^{(i)}/\rho^{(i)}$ (already existing boxes) and $\tau^{(i+1)}/\tau^{(i)}$ (new boxes added) are disjoint. 

    The cylindric case is now obtained by extending from rows $i=1,\ldots,k$ using the periodicity condition in \cref{cylSVTperiod} and ensuring that the conditions in \cref{cylSVT} are still satisfied if there is an overlap. This is precisely the case when $\tau^{(i)}/\tau^{(i-1)}$ is a cylindric horizontal strip and $\tau^{(i)}/\rho^{(i)}$ is a cylindric rook strip.
\end{proof}

\subsection{Cylindric skew Grothendieck polynomials}
We are now ready to define generalisations of known Grothendieck polynomials as sums over cylindric set-valued tableaux.
\begin{defn}
    Define {\em factorial cylindric skew Grothendieck polynomials} by 
\begin{equation}\label{cylG}
    G_{\Theta}(x_1,\ldots,x_m|t)=\sum_{T}\beta^{\#T-|\Theta|}\prod_{\substack{(i,j)\in\Theta\\1\le i\le k}}\prod_{r\in T(i,j)}(x_r\oplus t_{r+j-i}),
\end{equation}
where the sum runs over all set-valued cylindric tableaux $T$ of shape $\Theta=\theta_1/\theta_2$ and we set $t_{i+n}=t_i$ for all $i\in\Z$. 
\end{defn}
N.B. the first product restricts the boxes entering the definition of the polynomial to the fundamental strip $[1,k]\times \Z$ and, thus, there are only a finite number of boxes contributing. 

If $\Theta=\theta_1/\theta_2$ is not a valid cylindric skew shape, i.e. $\theta_2\not\subset\theta_1$ then we set the polynomial equal to zero. Similarly, if a cylindric skew shape $\Theta$ contains columns which have more than $m$ boxes then the condition that the entries in columns must be strictly increasing cannot be satisfied and in this case we also set the corresponding polynomial equal to zero. If $\theta_1=\theta_2$ and the skew shape $\Theta$ is empty, the polynomial is defined to be $1$.

\begin{remark}\rm
    The factorial cylindric skew Grothendieck polynomials are a generalisation of McNamara's factorial (stable) skew Grothendieck polynomials \cite{mcnamara:factorial}: if $\mu\subset\lambda\subset (n-k)^k$ then for the cylindric skew shape $\Theta=\lambda[0]/\mu[0]$ the above definition specialises to the one of McNamara. 
\end{remark}

The second set of polynomials we introduce are generalisations of Yeliussizov's {\em modified skew Grothendieck functions}. Recall from \cite{yeliussizov2019symmetric} that the latter are defined for any pair $\mu\subset\lambda$ of partitions as follows: 
\[
 G_{\lambda//\mu}(x_1,\ldots,x_m)
 =\sum_{\substack{\rho\subset\mu\\
                   \mu/\rho\text{ a rook strip}}}
 \beta^{|\mu/\rho|}G_{\lambda/\rho}(x_1,\ldots,x_m).
\]
Here $G_{\lambda/\rho}$ are {\em stable skew Grothendieck polynomials} defined as sums over (non-cylindric) set-valued skew tableaux.
\begin{defn}
    Let $\theta_2\subset\theta_1$ be two cylindric loops, then we define {\em factorial modified cylindric skew Grothendieck polynomials} as
\begin{equation}\label{mcsG}
    G_{\theta_1//\theta_2}(x_1,\ldots,x_m|t)=\sum_{\substack{\rho\subset\theta_2\\
    \text{cyl. rook strip}}}\beta^{|\theta_2/\rho|}G_{\theta_1/\rho}(x_1,\ldots,x_m|t),
\end{equation}
where the sum ranges over all cylindric loops $\rho$ such that $\theta_2/\rho$ is a {\em cylindric rook strip} as defined previously. For non-valid cylindric skew shapes, $\theta_2\not\subset\theta_1$, we set the polynomial equal to zero. 
\end{defn}
Note that it can happen that while the cylindric skew shape $\theta_1/\theta_2$ has at most $m$ boxes in each column, the shape $\theta_1/\rho$ has more than $m$ boxes in a column. In this case the corresponding cylindric skew Grothendieck polynomial vanishes by definition and, thus, those cylindric rook strips do not contribute to the sum in \cref{mcsG}. 
\begin{remark}\rm
    We recover Yeliussizov's modified Grothendieck polynomials as a special case when the cylindric skew shape $\Theta=\theta_1/\theta_2$ (up to repetition) fits into the bounding box $(n-k)^k$ and setting $t_j=0$ for all $j=1,\ldots,n$. Note, however, that we differ in our conventions from \cite{yeliussizov2019symmetric}: our deformation parameter $\beta$ is related to the one in {\em loc. cit.} by a minus sign. Furthermore, it is not clear at first sight that the polynomials in \cref{cylG} or \cref{mcsG} are symmetric in the $x$-variables, we will show this for certain cases of modified cylindric polynomials using our lattice model below; see \Cref{prop:Z=mcsG}.
\end{remark}

Suppose now that $\lambda,\nu\subset (n-k)^k$. Then we set $\theta_1=\hat\lambda[0]$, the cylindric loop that has entries $\hat\lambda_i[0]=\lambda_i+n-k$ for $i=1,\ldots,k$, and $\theta_2=\nu[d]$ for $0\le d\le k$. Denote by $\tau$ the shift operator for the equivariant parameters, i.e. $\tau(t_i)=t_{i+1}$ and write $G_\lambda(x|\tau^pt)$ etc. for the Grothendieck polynomials with shifted equivariant parameters (this notation is used in the context of factorial Schur polynomials, which we recover when setting $\beta=0$).
\begin{thm}\label{prop:Z=mcsG}
    The partition function $Z^\nu_\lambda$ of our lattice model has the following expansion
    \begin{equation}
        Z^\nu_\lambda(x_1,\ldots,x_k|t;q)=\sum_{d=0}^kq^dG_{\hat\lambda[0]//\nu[d]}(x_1,\ldots,x_k|\tau^kt),
    \end{equation}
    which matches the geometric definition in \cref{defZ} upon setting $x_i=1+z_i$, $t_j=1-\ve^{-1}_j$ and $\beta=-1$.
\end{thm}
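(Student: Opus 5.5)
The plan is a direct bijective argument on the graphical side. I expand each lattice configuration of the osculating-walkers model into monomials in $\beta$, and match the resulting terms with cylindric set-valued tableaux of shapes $\hat\lambda[0]/\rho$, where $\nu[d]/\rho$ is a cylindric rook strip. I work throughout with the graphical definition \cref{E:graph-part-fun}, using the modified weights $x_i\oplus t_j$ for vertex (I) and $1+\beta\,x_i\oplus t_j$ for vertex (II). At the end I check that the specialization \cref{x2z} turns these into $1+z_i/\ve_j$ and $-z_i/\ve_j$ (up to the global normalization and the reindexing of columns, which produces the shift $\tau^k t$). Agreement with \cref{defZ} then follows from the already established identification of the transfer matrix \cref{transfer_matrix}.

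\textbf{Step 1: one row.} I treat the single transfer matrix $t_{00}(x_i)+q\,t_{11}(x_i)$ as a map on cylindric loops. Reading the $01$ words at the top and bottom of a row as cylindric loops, the five-vertex rules say that the $1$-strands move at most one step right per row. In terms of the loops $\tau^{(i-1)}$ (above) and $\tau^{(i)}$ (below), the region between consecutive loops is then a cylindric horizontal strip. Each vertex of type (II) contributes $1+\beta(x_i\oplus t_j)$. I expand this as a choice: either take $1$, or take $\beta(x_i\oplus t_j)$, which I interpret as placing an extra entry $i$ in a box already filled by earlier rows. I check that the boxes where such a choice is possible form a cylindric rook strip $\tau^{(i-1)}/\rho^{(i-1)}$. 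The weight $x_i\oplus t_{i+j-\text{row}}$ should be the content-dependent factor appearing in \cref{cylG}; matching these indices is where the $\tau^k$ shift is fixed.

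\textbf{Step 2: gluing rows.} Composing $k$ rows and using \Cref{lem:cylSVT}, a configuration together with a choice of expansion term corresponds to a sequence $(\tau^{(i)},\rho^{(i)})$. The top boundary must equal $\hat\lambda[0]$, which is the complement-type loop. I must carefully verify that top$=\lambda$ corresponds to outer loop $\hat\lambda[0]$ and bottom$=\nu$ corresponds to inner loop $\nu[d]$, given the paper's reading directions. The number $d$ of $1$'s crossing the seam is what translates the inner loop by $(d,d)$, so $\nu[d]$ appears with the factor $q^d$. The constraint $d\le k$ comes from the fact that at most one strand crosses per row, as in \Cref{lemma:noq}. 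Conditions (ii) and (i) of \Cref{lem:cylSVT} are the only ones that can fail: the first rook strip $\tau^{(0)}/\rho^{(0)}$ may eat into $\nu[d]$. Dropping these conditions gives exactly the sum over $\rho\subset\nu[d]$ with $\nu[d]/\rho$ a cylindric rook strip, weighted by $\beta^{|\nu[d]/\rho|}$. This is the remark after \Cref{lem:cylSVT}, and it produces $G_{\hat\lambda[0]//\nu[d]}$.

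\textbf{Main obstacle.} The hard step is the local bijection in Step 1. I need to show that the expansion of each type (II) vertex corresponds exactly to admissible added entries forming a rook strip compatible with the horizontal-strip condition (iv). Near the seam of the cylinder this requires the periodicity \cref{cylSVTperiod}. I would first verify the claim in the non-cylindric region by a vertex-by-vertex case check on the five vertex types. I would then handle the boundary by periodic extension, and sanity-check the whole correspondence against \Cref{ex:Gr24} and \Cref{ex:Gr24-21}.
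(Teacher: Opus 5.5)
Your architecture is the paper's: a one-row description of the transfer matrix by cylindric strips (the paper's \Cref{lem:transfer_matrix}), followed by stacking rows via \Cref{lem:cylSVT} with (i), (ii) relaxed. The paper phrases the stacking as an induction: it adds the new row at the bottom as the smallest letter and shifts the old tableau down one row. But Step 1, as you set it up, would fail, and the idea that fixes it is missing. Suppose you read the upper and lower words $\mu,\nu$ of a row directly as the loops of $\mu$ and $\nu$. Then (ignoring the seam) $\mu\subset\nu$, and $\nu/\mu$ is a horizontal strip with one box for each strand that \emph{moves}, i.e.\ under the vertices of type (II) and (V), whose weights are $1+\beta\,x\oplus t$ and $1$. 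The type (I) vertices are the only ones carrying the factor $x\oplus t$ that \cref{cylG} attaches to a box, and they are not recorded at all. That reading is also what produces your orientation ($\tau^{(i-1)}$ above, $\tau^{(i)}$ below). This contradicts Step 2, where the top word must give the outer loop $\hat\lambda[0]=\tau^{(k)}$.

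The missing idea is that the upper word $\mu$ must be read as the translated loop $\mu^{(1)}[0]$, with $\mu^{(1)}=(n-k+\mu_k,\mu_1,\ldots,\mu_{k-1})$; this is $\mu[0]$ enlarged by a maximal cylindric horizontal strip. The row's strip is then $\mu^{(1)}[0]/\nu[d]$, the complement of $\nu/\mu$ in that maximal strip, and its boxes are exactly the type (I) vertices. Each type (II) vertex sits over a removable corner of the \emph{lower} loop (a $0$ preceded by a $1$ in $J_\nu$), so the rook strip coming from $1+\beta\,x\oplus t$ is cut from the lower loop. Hence letter $r$ belongs to the $r$-th row from the bottom; relabel the $x$'s using commutativity of the transfer matrices. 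Each added row translates the frame by one row. This is exactly what turns $\lambda[0]$ into $\lambda^{(k)}[0]=\hat\lambda[0]$ and gives letter $r$ the content shift $t_{r+j-i}$ in \cref{cylG}. So it is the substance of Step 1, not a point to check at the end.

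Step 2 also needs correcting: $\rho^{(0)}$ is not the only loop that can dip below $\nu[d]$. A higher row can place its $\beta$-entry on a corner of $\nu[d]$ that no lower row touched. For example, in \Cref{ex:Gr24SVT} the monomial $\beta\,(x_2\oplus t_2)$ of $G_{\hat\lambda[0]//\nu[1]}$ is the tableau with the single entry $\{2\}$ in a box of $\nu[1]/\rho$. That tableau has $\rho^{(0)}=\nu[1]$ but $\rho^{(1)}\not\supset\nu[1]$. So (ii) must be dropped for every $i$, and you need one further observation. Any box of $\nu[d]$ missing from some $\rho^{(i)}$ is a removable corner of $\nu[d]$, because $\nu[d]\subset\tau^{(i)}$ and $\tau^{(i)}/\rho^{(i)}$ is a rook strip. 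Hence all such boxes together form a cylindric rook strip $\nu[d]/\rho$, and each of them receives a non-empty set of letters with one factor $\beta$ per letter. Only with this does the sum over configurations collapse to $\sum_\rho\beta^{|\nu[d]/\rho|}G_{\hat\lambda[0]/\rho}=G_{\hat\lambda[0]//\nu[d]}$.
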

We postpone the proof of this theorem to the next subsection as we first need to give an alternative description of the action of the transfer matrix of the lattice model in terms of cylindric loops; see \Cref{lem:transfer_matrix}.

\begin{figure}[h!]
\centering
\includegraphics[width=.9\textwidth]{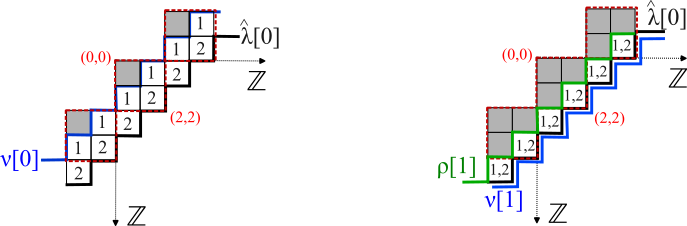} 
\caption{The two possible lattice configurations for $\Gr(2,4)$ and $\lambda=\nu=(1,0)$ from Figure \ref{fig:Gr24example} correspond to the set-valued cylindric skew tableaux shown here.}\label{fig:Gr24exampleSVT}
\end{figure}

\begin{example}\label{ex:Gr24SVT}\rm
    Let us revisit the simple non-trivial example $\Gr(2,4)$ with $\lambda=\nu=(1,0)$ from \Cref{ex:Gr24}. Recall that in this case there are only two possible lattice configurations which are depicted in Figure \ref{fig:Gr24example}. As we will explain below, the left lattice configuration maps onto a unique cylindric skew tableau and in this particular case we have
    \begin{multline*}
         G_{\hat\lambda[0]//\nu[0]}(x_1,x_2|\tau^2t)=G_{(2,2,1)/(1,0,0)}(x_1,x_2|\tau^2t)=(x_1\oplus t_2)(x_2\oplus t_2)(x_1\oplus t_4)(x_2\oplus t_4)\;.
    \end{multline*}
    The second lattice configuration in Figure \ref{fig:Gr24example} has a line carrying a 1-letter crossing the boundary and gives rise to a quantum term. Here we need to compute the modified cylindric skew Grothendieck polynomial $G_{\hat\lambda[0]//\nu[1]}(x_1,x_2|\tau^2t)$. The latter consists of four terms,
    \[
    G_{\hat\lambda[0]//\nu[1]}=1+\beta\, G_{(2,2,1)/(2,2,0)}+\beta\, G_{(2,2,1)/(2,1,1)}+\beta^2 G_{(2,2,1)/(2,1,0)}
    \]
    which arise from the four possible cylindric rook strips $\nu[1]/\rho$. Here the first term corresponds to $\rho=\nu[1]=\hat\lambda[0]$ and the last term to the cylindric rook strip shown on the right in Figure \ref{fig:Gr24exampleSVT}. In this simple example, there is no overlap of the cylindric skew diagrams and, therefore, each single term equals a standard (non-cylindric) skew Grothendieck polynomial, but in general this will not be true.
    
    Due to the occurrence of vertices of type II in the second lattice configuration from Figure \ref{fig:Gr24example} there is not a single cylindric set-valued tableau that corresponds to the configuration, instead one has to expand the weight of the lattice configuration as a polynomial in $\beta$ and match each term to a different tableau. For example, the set-valued tableau shown in Figure \ref{fig:Gr24exampleSVT} on the right belongs to the skew polynomial $G_{(2,2,1)/(2,1,0)}$ and has weight
    \[
    \beta^2 (x_1\oplus t_1)(x_2\oplus t_2)(x_1\oplus t_3)(x_2\oplus t_4)\;.
    \]
    Summing over all possible set-valued skew tableaux originating from all four polynomials and collecting terms, one arrives at
    \[
    G_{\hat\lambda[0]//\nu[1]}=(1+\beta\,x_1\oplus t_1)(1+\beta\,x_2\oplus t_2)(1+\beta\,x_1\oplus t_3)(1+\beta\,x_2\oplus t_4)
    \]
    which matches the weight of the lattice configuration. Thus, we obtain the claimed result for the partition function,
    \[Z_{\lambda}^\nu(x_1,x_2|t;q)=G_{\hat\lambda[0]//\nu[0]}(x_1,x_2|\tau^2t)+q\,G_{\hat\lambda[0]//\nu[1]}(x_1,x_2|\tau^2t)\;.
    \]
    There is no $q^2$ term since the corresponding cylindric skew shape $\hat\lambda[0]/\nu[2]$ is not well-defined and, hence, the corresponding polynomial is by definition equal to zero. Using \cref{x2z} one verifies that the result matches our computation of the partition function in \cref{Z24} from \Cref{ex:Gr24} using the graphical calculus.
\end{example}

As an immediate corollary of \Cref{prop:Z=mcsG} we obtain the following special cases:
\begin{cor}
    (i) For $q=0$ the partition function simplifies to
    \begin{equation}\label{Z=msG}
    Z_{\lambda}^\nu(x_1,\ldots,x_k|t)=G_{\hat\lambda[0]//\nu[0]}(x_1,\ldots,x_k|\tau^kt)
        =G_{((n-k)^k,\lambda)//\nu}(x_1,\ldots,x_k|\tau^kt),
    \end{equation}
    where the factorial modified (non-cylindric) skew Grothendieck polynomials are given by 
    \[
    G_{\lambda//\nu}(x|t)=\sum_{\substack{\rho\subset\nu\\\text{ rook strip}}}\beta^{|\nu|-|\rho|}G_{\lambda/\rho}(x|t),
    \]
    with the sum on the right running over McNamara's \cite{mcnamara:factorial} {\em factorial (stable) skew Grothendieck polynomials}. 
    
    (ii) If we further specialise $\lambda=\emptyset$ then we have the identity
    \begin{equation}\label{Z=H}
    Z^\mu_\emptyset(x_1,\ldots,x_k|t)=G_{(n-k)^k//\mu}(x|\tau^kt)=H^{(k,n)}_{\mu^\vee}(x_1,\ldots,x_k|t_n,\ldots,t_1),
    \end{equation}
    where the $H$-polynomials on the right are defined as
    \begin{equation}\label{Hbasis}
    H^{(k,n)}_\mu(x|t)=\sum_{\substack{\mu\subset\lambda\subset (n-k)^k\\
    \lambda/\mu\text{ rook strip}}}\beta^{|\lambda/\rho|}G_{\lambda}(x|t)\;,
    \end{equation}
    and for $x_i=1+z_i$, $t_j=1-\ve_{j}^{-1}$ and $\beta=-1$ represent {\em boundary ideal sheaf classes}.
\end{cor}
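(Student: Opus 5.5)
We deduce both parts from \Cref{prop:Z=mcsG} by specialization, and then identify the cylindric objects with ordinary (non-cylindric) ones. For (i) we set $q=0$, so only the $d=0$ summand $G_{\hat\lambda[0]//\nu[0]}(x|\tau^kt)$ survives. It then remains to show that the cylindric skew shape $\hat\lambda[0]/\rho$, for each cylindric rook strip $\nu[0]/\rho$, is the same as the ordinary skew shape $((n-k)^k,\lambda)/\rho$ with $2k$ rows. Here we use the column description of cylindric shapes from the Remark after the definition of cylindric skew diagrams, restricting to columns $1,\ldots,n-k$ instead of rows $1,\ldots,k$.

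The key steps for (i) are as follows.
\begin{enumerate}
\item Row $i\le k$ of $\hat\lambda[0]/\nu[0]$ consists of the boxes $\nu_i<j\le n-k+\lambda_i$. By the periodicity $(i,j)\sim(i+k,j-(n-k))$, the boxes with $j>n-k$ are the boxes $(i+k,j')$ with $1\le j'\le\lambda_i$. So a fundamental domain inside the columns $[1,n-k]$ is exactly the shape $((n-k)^k,\lambda)/\nu$.
\item Since $0\le\nu_i$ and $\lambda_i\le n-k$, this domain lies strictly inside one period. Hence the cylindric tableau conditions \cref{cylSVT}, \cref{cylSVTperiod} reduce to the ordinary row and column conditions of set-valued tableaux on this $2k$-row shape.
\item The content label $t_{r+j-i}$, with indices taken mod $n$, is unchanged under the translation $(i,j)\mapsto(i+k,j-(n-k))$, because $j-i$ shifts by $-n$. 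So the weights agree with McNamara's factorial weights, which gives $G_{\hat\lambda[0]/\rho}=G_{((n-k)^k,\lambda)/\rho}$.
\item Cylindric rook strips $\nu[0]/\rho$ correspond to ordinary rook strips $\nu/\rho$. A box of the strip removed in row $k+1$ is identified with a box removed in row $1$.
\end{enumerate}

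We expect the main obstacle to be the boundary case where the strip would remove a box at column $0$, i.e.\ $\rho_k=-1$ when $\nu_k=0$. In the column picture such a box lands in row $2k$, column $n-k$, outside $((n-k)^k,\lambda)$ unless extra boxes are present. We would first try to show that these terms vanish. The natural argument is that such a shape produces a column with more than $k$ boxes, so the polynomial in $k$ variables is zero by convention. If that fails, we would check directly that these rook strips are exactly those allowed in Yeliussizov's definition of $G_{((n-k)^k,\lambda)//\nu}$.

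For (ii) we set $\lambda=\emptyset$ in (i), so that $((n-k)^k,\emptyset)=(n-k)^k$ and $Z_\emptyset^\mu=G_{(n-k)^k//\mu}(x|\tau^kt)$. Next we use the rotation identity $G_{(n-k)^k/\rho}(x|\tau^kt)=G_{\rho^\vee}(x|t_n,\ldots,t_1)$. This holds because rotating a skew shape in the rectangle by $180^\circ$ maps set-valued tableaux bijectively onto those of $\rho^\vee$, and under this rotation the content $j-i$ goes to the complementary content. This is the equivariant form of the identity $G_{\mu^\vee}=G_{(n-k)^k/\mu}$ used in \Cref{rmk:skew-groth}, and we would verify that the parameter reversal matches $\tau^k$ exactly.

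Under complementation, removing a rook strip $\mu/\rho$ becomes adding a rook strip $\rho^\vee/\mu^\vee$, with $|\mu/\rho|=|\rho^\vee/\mu^\vee|$. This turns the sum into $H^{(k,n)}_{\mu^\vee}(x|t_n,\ldots,t_1)$ as in \cref{Hbasis}. Finally, the geometric interpretation follows from \Cref{prop:structure-constants}, which gives $Z_\emptyset^\mu=A_\mu$, together with \Cref{cor:Amu=I}(a), which shows that $A_\mu$ represents the ideal sheaf $\mathcal{I}^\mu$ after setting $x_i=1+z_i$, $t_j=1-\ve_j^{-1}$ and $\beta=-1$.
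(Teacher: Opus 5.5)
Your route is the paper's: you specialize \Cref{prop:Z=mcsG} at $q=0$, read the cylindric shape on the column strip $1\le j\le n-k$, use that contents agree modulo $n$, and rotate by $180^\circ$ for (ii). Your handling of the cylindric rook strips with $\rho_k=-1$, and your derivation of the ideal-sheaf statement from \Cref{prop:structure-constants} and \Cref{cor:Amu=I}(a), are useful additions that the paper leaves implicit.

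\textbf{Gap in step 2.} Step 2 is not justified as written, because lying inside one period does not remove the seam. Take $i\le k$ with $\rho_i<n-k$ and $\lambda_i\ge 1$. Then the boxes $(i,n-k)$ and $(i,n-k+1)$ are adjacent in $\Theta=\hat\lambda[0]/\rho$, and $T(i,n-k+1)=T(i+k,1)$. So \cref{cylSVT} imposes $\max T(i,n-k)\le \min T(i+k,1)$. This relates two boxes that are not adjacent in the ordinary shape $((n-k)^k,\lambda)/\rho$.

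This is a genuine extra constraint in general. For $k=1$, $n=3$, $\lambda=\nu=(1)$ and two letters, the ordinary shape $(2,1)/(1)$ consists of two unrelated boxes, while the cylinder forces $T(1,2)\le T(2,1)$.

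The identification holds only because there are exactly $k$ letters. The $k-i$ boxes $(i+1,n-k),\ldots,(k,n-k)$ lie below $(i,n-k)$, so column strictness gives $\max T(i,n-k)\le i$. The $i-1$ boxes $(k+1,1),\ldots,(k+i-1,1)$ lie above $(i+k,1)$, so $\min T(i+k,1)\ge i$. Hence the seam condition holds automatically. You need to add this counting argument; the paper's proof is silent on it as well.

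\textbf{Two smaller corrections.}
\begin{enumerate}
\item The box removed when $\rho_k=-1$ is $(k,0)\sim(0,n-k)$. It sits on top of column $n-k$ in row $0$, not in row $2k$. The loop condition $\rho_0=n-k-1\ge\rho_1$ forces $\rho_1<n-k$, so column $n-k$ then contains the $k+1$ boxes in rows $0,\ldots,k$. Your vanishing argument therefore does go through.
\item In (ii), the rotation is a bijection of set-valued tableaux only after you also reverse the alphabet, $r\mapsto r'=k+1-r$. With $c=j-i$ and $c'$ the rotated content, one gets $t_{k+r+c}=t_{n+1-(r'+c')}$. The result is therefore $G_{\rho^\vee}(x_k,\ldots,x_1|t_n,\ldots,t_1)$, and you must invoke symmetry in $x$, as the paper does.
\end{enumerate}
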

\begin{proof}
(i) We note that in \cref{Z=msG} above we are using the partition $((n-k)^k,\lambda)$ which is outside the bounding box $(n-k)^k$. However, the skew diagrams $((n-k)^k,\lambda)/\rho$, with $\nu/\rho$ a rook strip, can  have at most $k$ boxes in each column. Note that we must have that $\lambda\subset\nu$ otherwise the partition function vanishes. The latter fact can be proved using the graphical calculus of the lattice model and the argument is completely analogous to the one in \cite[Lemmas 3.16 and 3.19]{korff2014quantum} for quantum cohomology albeit with different conventions in {\em loc. cit.}

\cref{Z=msG} then follows from the construction of the cylindric skew diagram $\hat\lambda[0]/\nu[0]$ by noting that the boxes in rows $1,\ldots,k$ have the same entries as the boxes in the columns $1,\ldots,n-k$ and that the content $j-i$ of box $(i,j)$ under a period shift $(i,j)\to(i+k,j-n+k)$ is the same modulo $n$.

To prove (ii) we observe that the Young diagrams of $(n-k)^k/\mu$ and $\mu^\vee$ coincide when rotated by 180 degrees. The rotation leads to a coordinate shift of each box, $(i,j)\to(k+1-i,n-k+1-j)$, and as a result of its content.  Thus, replacing $x_i\to x_{k+1-i}$ and $t_{i+k}\to t_{n+1-i}$ in any tableau of shape $(n-k)^k/\mu$ we obtain the desired identity upon recalling that the modified Grothendieck polynomials are symmetric in the $x$-variables.
\end{proof}
\begin{example}\rm
    Let us consider the example $\Gr(2,5)$ for $q=0$. In the partition function from \cref{Z=H} we choose the boundary conditions on top to be the 01-word of the partition $\lambda=\emptyset$ and on the bottom that of the partition $\mu$. In this case we have that 
\[
Z^\mu_\emptyset(x_1,x_2|t)=H^{(2,5)}_{\mu^\vee}(x_1,x_2|t_5,\ldots,t_1)\;.
\]
 Let us specialise to $\mu=(3,3)$. Then
\[
Z^{(3,3)}_\emptyset(x|t)=H^{(2,5)}_\emptyset(x|t_5,\ldots,t_1)=1+\beta \,G_{1}(x|t_5,\ldots,t_1)=\frac{z_1z_2}{\ve_5\ve_4},
\]
where we have used the variable substitution from \cref{x2z} in the last step. 

On the other hand computing the modified factorial skew Grothendieck polynomial we find
\[
G_{(3,3)//(3,3)}(x|t)=1+\beta\,G_{(3,3)/(3,2)}(x|t)=\frac{z_1z_2}{\ve_2\ve_3}\;,
\]
since the single box of the skew diagram $(3,3)/(3,2)$ occurs in row 2 and column 3 of the Young diagram of $(3,3)$ and, therefore, its content is 1. Shifting the equivariant parameters by $k=2$ we obtain the same result as previously.
\end{example}

    Setting $\beta=0$ we obtain the partition function for the lattice model describing equivariant quantum cohomology \cite{korff2014quantum,gorbounov.korff:unpublished,gorbounov.korff.stroppel} and our formulae simplify considerably. First, note that in the definition of the modified Grothendieck polynomials from \cref{mcsG} only the term with the empty rim hook survives if $\beta=0$, thus, we are led to consider the specialisation of cylindric factorial Grothendieck polynomials at $\beta=0$. The latter become cylindric factorial Schur polynomials as the sum in \cref{cylG} restricts to a sum over cylindric semi-standard tableaux and the multiplicative group law specialises to the additive group law, $x\oplus y|_{\beta=0}=x+y$. 

\begin{cor}\label{cor:toricSchur}
    Setting $\beta=0$ the partition function of the integrable model specialises to
    \[
    Z_{\lambda}^\nu(x_1,\ldots,x_k|t;q)=\sum_{d=0}^kq^d\, s_{\hat\lambda[0]/\nu[d]}(x_1,\ldots,x_k|\tau^kt)
    \]
    where $s_\Theta$ denotes the factorial cylindric or toric Schur function,
    \[
    s_{\Theta}=\sum_{T}\prod_{\substack{(i,j)\in\Theta\\1\le i\le k}}\prod_{r\in T(i,j)}(x_r+ t_{r+j-i})
    \]
    with the sum now running over all cylindric {\em semi-standard} tableaux of shape $\Theta$, i.e. sequences of cylindric loops $(\tau^{(i)})_{i=0}^k$ such that each $\tau^{(i)}/\tau^{(i-1)}$ is a cylindric horizontal strip and $\Theta=\tau^{(k)}/\tau^{(0)}$.
\end{cor}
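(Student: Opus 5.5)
This is a direct specialization of \Cref{prop:Z=mcsG} at $\beta=0$, so the plan is to set $\beta=0$ on both sides of that identity and track what survives. First, in the modified polynomial \cref{mcsG} every term with $\theta_2/\rho$ a non-trivial cylindric rook strip carries a factor $\beta^{|\theta_2/\rho|}$ with positive exponent. It therefore vanishes at $\beta=0$, and only the term $\rho=\theta_2$ remains:
\[
G_{\hat\lambda[0]//\nu[d]}\big|_{\beta=0}=G_{\hat\lambda[0]/\nu[d]}\big|_{\beta=0}.
\]
For $d$ where the shape $\hat\lambda[0]/\nu[d]$ is invalid, both sides are zero by convention, so the range $0\le d\le k$ is unchanged.

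Next I would specialize \cref{cylG}. The factor $\beta^{\#T-|\Theta|}$ kills every cylindric set-valued tableau with some box containing more than one entry, since then $\#T>|\Theta|$. What remains are tableaux with singleton entries, i.e.\ cylindric semi-standard tableaux. The weight $x_r\oplus t_{r+j-i}$ becomes $x_r+t_{r+j-i}$ at $\beta=0$ by \cref{FGL}. Hence $G_{\hat\lambda[0]/\nu[d]}(x|\tau^kt)|_{\beta=0}=s_{\hat\lambda[0]/\nu[d]}(x|\tau^kt)$ with $s_\Theta$ as in the statement.

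It then remains to justify the stated description of the summation set. This is \Cref{lem:cylSVT} restricted to the case $\rho^{(i)}=\tau^{(i)}$ for all $i$: all the rook strips $\tau^{(i)}/\rho^{(i)}$ are trivial, and condition (iv) reduces to each $\tau^{(i)}/\tau^{(i-1)}$ being a cylindric horizontal strip. This is exactly the singleton case treated first in the proof of that lemma, with $m=k$ letters because there are $k$ variables.

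The only mildly delicate point is that $\beta$ also appears in the vertex weights of the lattice model, since the type (II) weight is $1+\beta\,x_i\oplus t_j$. One must check that specializing $\beta=0$ commutes with the identity of \Cref{prop:Z=mcsG}. This is automatic, because that identity holds as a polynomial identity in the formal variable $\beta$. The left side then becomes the $\beta=0$ lattice model for equivariant quantum cohomology, which completes the argument.
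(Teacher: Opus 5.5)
Your proposal is correct and follows the paper's own route: specialize \Cref{prop:Z=mcsG} at $\beta=0$, so that only the trivial rook strip survives in \cref{mcsG}, only singleton (semi-standard) cylindric tableaux survive in \cref{cylG}, and $x\oplus y$ becomes $x+y$. Your two extra remarks make explicit what the paper leaves implicit: that the horizontal-strip description comes from \Cref{lem:cylSVT} with $\rho^{(i)}=\tau^{(i)}$, and that specializing is legitimate because the identity is polynomial in $\beta$.
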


\begin{remark}\label{rmk:toricSchur}\rm
We note that the definition of the cylindric skew shape $\Theta=\hat\lambda[0]/\nu[d]$ differs from the cylindric skew shape $\nu[d]/\lambda[0]$ considered in the earlier works on toric Schur functions \cite{postnikov:affine,korff2014quantum,gorbounov.korff:unpublished}. However, as already pointed out in the introduction the corresponding toric Schur functions $s_{\hat\lambda[0]/\nu[d]}$ and $s_{\nu[d]/\lambda[0]}$, while of different degree, both allow one to recover the non-equivariant, genus 0, 3-point Gormov-Witten invariants by using the `curious duality' from \cite[Cor.6.9]{postnikov:affine}.
\end{remark}

\subsection{The transfer matrix and cylindric horizontal strips}
In order to prove \Cref{prop:Z=mcsG} we state a combinatorial description of the action of the row-to-row transfer matrix of the lattice model from \cref{transfer_matrix} (albeit now with respect to the Boltzmann weights containing $\beta$; see the comment after \cref{FGL}) in terms of cylindric horizontal strips. Denote by $v_\lambda\in (\C^2)^{\otimes n}$ the basis vector corresponding to the class $\cO_\lambda$ under the isomorphism from \cref{E:iso}.
\begin{lemma}\label{lem:transfer_matrix}
    Let $\lambda\subset(n-k)^k$, then the action of the transfer matrix can be described in terms of cylindric skew diagrams as follows:
    \begin{equation}
        t(x_1)v_\lambda=\sum_{d=0,1}q^d\sum_{\nu}\left(\sum_{\rho\\
        }\beta^{|\nu[d]/\rho|}\prod_{\substack{1\leqslant i\leqslant k\\
        (i,j)\in \lambda^{(1)}[0]/\rho}}x_1\oplus t_{k+1+j-i}\right)v_\nu,
    \end{equation}
    where
    \begin{itemize}
        \item the second sum runs over all $\nu\subset(n-k)^k$ such that $\lambda^{(1)}[0]/\nu[d]$ is a cylindric horizontal strip with $\lambda^{(1)}=(n-k+\lambda_k,\lambda_1,\ldots,\lambda_{k-1})$;
        \item the third sum runs over all cylindric loops $\rho$ such that $\nu[d]/\rho$ is a cylindric rook strip and $\lambda^{(1)}[0]/\rho$ is a cylindric horizontal strip.
    \end{itemize}
\end{lemma}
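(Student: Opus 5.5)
We propose to prove the lemma by a direct bijection between single-row lattice configurations of the osculating walkers model and pairs of cylindric loops $(\nu[d],\rho)$, followed by expanding each vertex weight in powers of $\beta$. First we translate the $01$-words into loops. The top word $J_\lambda$ corresponds to the loop $\lambda[0]$. The bottom word $J_\nu$, together with the value $d\in\{0,1\}$ of the boundary edge shared by the left and right sides of the cylinder, corresponds to $\nu[d]$. The reindexing $\lambda\mapsto\lambda^{(1)}=(n-k+\lambda_k,\lambda_1,\ldots,\lambda_{k-1})$ is just $\lambda[0]$ read with its row labels shifted by one: $\lambda^{(1)}_i[0]=\lambda_{i-1}[0]$, up to the overall shift by the lattice vector. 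Under this identification, the horizontal edges in the row that carry a $1$ mark exactly the columns in which boxes of $\lambda^{(1)}[0]/\nu[d]$ lie. The constraint that a path moves at most one step horizontally between consecutive $1$-letters, with the vertex types (I)–(V), then translates into the condition that $\lambda^{(1)}[0]/\nu[d]$ has at most one box per column, i.e. is a cylindric horizontal strip. The factor $q^d$ appears exactly when a $1$ crosses the seam, which is the shift $\nu[0]\to\nu[1]$. This step is the analogue of the $\beta=0$ argument in \cite{korff2014quantum}, so we would follow that dictionary closely.

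Next we identify the weights. In the given row, vertices of type (I) contribute $x_1\oplus t_j$ and vertices of type (II) contribute $1+\beta\,(x_1\oplus t_j)$; all other vertices have weight $1$. We would check that the columns carrying type (I) or (II) vertices are exactly the boxes of $\lambda^{(1)}[0]/\nu[d]$. Type (II) vertices should occur precisely at boxes that are simultaneously at the end of their row and at the bottom of their column in $\nu[d]$, i.e. at removable corners. Expanding $\prod_{\mathrm{II}}(1+\beta\, x_1\oplus t_j)$ then amounts to choosing a subset $R$ of these corners. Removing $R$ from $\nu[d]$ produces a loop $\rho$ with $\nu[d]/\rho$ a cylindric rook strip, since corners lie in distinct rows and columns. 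The condition that $\lambda^{(1)}[0]/\rho$ is still a horizontal strip is exactly the requirement that the chosen boxes lie in columns not already occupied, which is automatic for the corners that occur. The resulting term is $\beta^{|R|}$ times the product of $x_1\oplus t$ over the boxes of $\lambda^{(1)}[0]/\rho$, which matches the summand in the lemma.

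Finally we match the equivariant indices. A box $(i,j)$ of the cylindric diagram lies in some lattice column $c$, and we must verify that $c\equiv k+1+j-i \pmod n$, using the periodicity $t_{i+n}=t_i$ and the right-to-left column reading. This should follow from the rule that the zero positions are $I_\lambda=\{\lambda_i+k-i+1\}$. In particular, the content $j-i$ shifted by $k+1$ recovers the position of the step in the $01$-word, and the row relabelling in $\lambda^{(1)}$ accounts for the offset.

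We expect the main obstacle to be the exact characterization of the columns carrying type (II) vertices, and the check that the rook strips arising from subsets of them are exactly the $\rho$ in the third sum, including the case where the strip wraps around the seam (the $d=1$ case and boxes in row $1$ versus row $k+1$). We would first verify this on the example $\Gr(2,4)$, $\lambda=\nu=(1,0)$ (\Cref{ex:Gr24SVT}), and then argue locally vertex by vertex, using the periodicity of the loops to handle the wrap-around.
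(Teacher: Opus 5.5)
Your plan follows the paper's route: read one row as a cylindric horizontal strip, identify type (II) vertices with removable corners of $\nu[d]$, expand each $1+\beta\,x_1\oplus t_j$ as ``remove that corner or not'', and match contents to column labels. However, the dictionary you state is wrong, and that dictionary is the whole content of the lemma.

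\textbf{The dictionary.} The boxes of $\lambda^{(1)}[0]/\nu[d]$ correspond to the type (I) vertices only, i.e.\ the $1$-paths that go straight down. They are not marked by the horizontal edges carrying a $1$, and the type (II) columns are not among them. The missing observation, which the paper states first, is that $\lambda^{(1)}[0]/\lambda[0]$ is the \emph{maximal} cylindric horizontal strip, with one box for each $1$-letter of $J_\lambda$, and that $\lambda[0]\subseteq\nu[d]\subseteq\lambda^{(1)}[0]$. A path that moves one step right (a horizontal $1$-edge) puts its box into $\nu[d]/\lambda[0]$. A path that goes straight leaves its box in $\lambda^{(1)}[0]/\nu[d]$.

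Type (II) vertices sit at removable corners of $\nu[d]$, i.e.\ at boxes \emph{inside} $\nu[d]$. So your two claims in the second paragraph contradict each other. With the dictionary as you state it, the $\beta^0$ terms also fail to match: the $\rho=\nu[d]$ summand has a factor $x_1\oplus t$ for every box of $\lambda^{(1)}[0]/\nu[d]$, whereas a type (II) vertex contributes $1$ at order $\beta^0$.

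The paper's own $\Gr(2,4)$, $\lambda=(2,1)$ example refutes your version. In the fourth configuration $\nu[1]=(3,2)=\lambda^{(1)}[0]$, so the strip is empty, yet there are two horizontal $1$-edges and two type (II) vertices.

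With the correct dictionary, the content check you postponed is immediate. The box just below the horizontal step at position $j$ of $J_\nu$ has content $c=j-1-k$. The corner ending the row whose $0$ is at position $j$ has content $c=j-k-1$. In both cases $t_{k+1+c}=t_j$, the lattice column of the type (I), respectively type (II), vertex.

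\textbf{The third sum.} Your claim that type (II) vertices occur ``precisely'' at removable corners is false. In the first configuration of that example, $\nu=(2,1)$ has two corners but there is no type (II) vertex. You also only establish one inclusion for the third sum. The lemma sums over \emph{all} $\rho$ with $\nu[d]/\rho$ a rook strip and $\lambda^{(1)}[0]/\rho$ a horizontal strip, so you must show that every corner compatible with the horizontal-strip condition is a type (II) vertex.

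This is a local argument. A corner is a $1$ at position $j-1$ followed by a $0$ at position $j$ of $J_\nu$, with indices read mod $n$. There are two cases:
\begin{enumerate}
\item That $1$ is a straight path, i.e.\ type (I) at $j-1$. Its box in $\lambda^{(1)}[0]/\nu[d]$ lies directly below the corner, so removing the corner puts two boxes in one column and violates the horizontal-strip condition.
\item That $1$ is the path from column $j$ that moved right. Then the bottom $0$ at $j$ rules out type (V), forcing type (II).
\end{enumerate}
The seam needs no separate treatment beyond reading indices cyclically, noting that a right move out of column $1$ is exactly what produces $d=1$.
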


To illustrate the claim we state a simple example.
\begin{example}\rm
    Consider the Grassmannian $\Gr(2,4)$ and set $\lambda=(2,1)$. Then there are four possible lattice row configurations which are  depicted below. 
\begin{gather}\label{fig:Gr24example2}
\Avoiding{0}{0}{}{0}\hspace{-.55cm}\Connecting{}{1}{}{1}\hspace{-.55cm}
\Avoiding{}{0}{}{0}\hspace{-.55cm}\Connecting{}{1}{0}{1}\qquad
%%%%%%%%%%%%%%%%%%%%%%%%%%
\Avoiding{0}{0}{}{0}\hspace{-.55cm}\Avoiding{}{1}{}{0}\hspace{-.55cm}
\Avoiding{}{0}{}{1}\hspace{-.55cm}\Connecting{}{1}{0}{1}\qquad
%%%%%%%%%%%%%%%%%%%%%%%%%%%%%%%%
\Avoiding{1}{0}{}{1}\hspace{-.55cm}\Connecting{}{1}{}{1}\hspace{-.55cm}
\Avoiding{}{0}{}{0}\hspace{-.55cm}\Avoiding{}{1}{1}{0}\qquad
%%%%%%%%%%%%%%%%%%%%%%%%%%%%%%%%
\Avoiding{1}{0}{}{1}\hspace{-.55cm}\Avoiding{}{1}{}{0}\hspace{-.55cm}
\Avoiding{}{0}{}{1}\hspace{-.55cm}\Avoiding{}{1}{1}{0}
\end{gather}
    The first configuration corresponds to $d=0$, $\nu=(2,1)$ and $\rho=\nu[0]$. The other possible cylindric rook strips $\rho=\mu[0]$ with $\mu=(2,0),(1,1),(1,0)$ violate the condition that $\lambda^{(1)}[0]/\rho$ has to be a cylindric horizontal strip. The associated weight is $(x_1\oplus t_1)(x_1\oplus t_3)$ and matches the weight of the lattice model.

    The second configuration in \cref{fig:Gr24example2} corresponds to $d=0$, $\nu=(2,2)$ and $\rho=\mu[0]$ with $\mu=\nu,(2,1)$. So, in this case there are two cylindric skew diagrams, one has weight $x_1\oplus t_1$ and the other $\beta (x_1\oplus t_1)(x_1\oplus t_3)$. Their sum then matches the weight from the lattice model, $
    (x_1\oplus t_1)(1+\beta\;x_1\oplus t_3)$.

    The last two lattice row configurations in \cref{fig:Gr24example2} both correspond to $d=1$. The third one has $\nu=(0,0)$ and $\rho=\nu[1], (2,1)[0]$. Therefore, we have again two terms which this time combine to the lattice model weight  $
    (x_1\oplus t_3)(1+\beta\;x_1\oplus t_1)$.

    Finally, the last configuration in \cref{fig:Gr24example2}  corresponds to $\nu=(1,0)$ and now there are four possible cylindric rook strips $\rho=\nu[1],(2,2)[0],(0,0)[1],(2,1)[0]$. The weights of these four terms can be combined into the product $(1+\beta\,x_1\oplus t_1)(1+\beta\;x_1\oplus t_3)$ which matches the weight from the lattice model.
\end{example}
\begin{proof}[Proof of \Cref{lem:transfer_matrix}]
    First we note that the transformation $\lambda\to \lambda^{(1)}=(n-k+\lambda_k,\lambda_1,\ldots,\lambda_{k-1})$ corresponds to adding a maximal horizontal strip. Namely, if $\ell(\lambda)<k$ then $\lambda^{(1)}=(n-k,\lambda_1,\ldots,\lambda_{k-1})$ which is the partition with a single box added in each column. If $\ell(\lambda)=k$ then we have in terms of cylindric loops that $\lambda^{(1)}[0]=(\lambda_1-1,\ldots,\lambda_k-1)[1]$, that is, we delete a column of maximal height and then translate the corresponding cylindric loop in the direction of the lattice vector $(1,1)$; compare with \cref{cyl_loop}. Thus, each vertex of type (I) in lattice row $i=1$ and lattice column $j$ amounts to adding a box with content $j-1-k$ in the skew diagram. Since these vertices can occur in adjacent columns this results in a horizontal strip.
    
    For example, if all vertices in \cref{5v} occurring within the lattice row configuration are either of type (I) or (IV), we have that $\lambda^{(1)}[0]/\nu[0]=\lambda^{(1)}[0]/\lambda[0]$ is a maximal horizontal strip and the only possible rook strip is trivial, $\rho=\lambda[0]$.

   Next, we observe that a vertex of type (II) can only occur if in the 01-word $J_\nu$ the respective 0-letter is preceded by a 1-letter, which corresponds to a box at the end of a row and at the bottom of a column in the Young diagram of $\nu$; see \Cref{fig:Young}. In the skew diagram $\lambda^{(1)}[0]/\nu[d]$ these vertices amount to the removal of these boxes and, thus, $\nu[d]/\rho$ must be a rook strip.

   Finally, taking into account that the weights associated with vertices of type (I) and (II) are $x_1\oplus t_j$ and $1+\beta\; x_1\oplus t_j$, respectively, we arrive at the stated formula by interpreting each vertex of type (II) as the `sum of two possible outcomes': either add a box to the rook strip $\nu[d]/\rho$ with weight $\beta\;x_1\oplus t_j$ or do not (with weight 1). Thus, after multiplying out the weights of a single row lattice configuration we obtain the asserted expansion.
\end{proof}

\Cref{lem:transfer_matrix} now allows us to prove \Cref{prop:Z=mcsG} by induction.

\begin{proof}[Proof of \Cref{prop:Z=mcsG}]
    Consider the matrix product $t(x_1)\cdots t(x_{k'})v_\lambda$ with $1\le k'\le k$. We make the slightly more general claim that
    \[
    t(x_1)\cdots t(x_{k'})v_\lambda=\sum_{d=0}^{k'}q^d\sum_{\nu}G_{\lambda^{(k')}[0]//\nu[d]}(x_1,\ldots,x_{k'}|\tau^kt)\,v_\nu\;,
    \]
    where $\lambda^{(k')}=(n-k+\lambda_{k-k'+1},\ldots,n-k+\lambda_k,\lambda_1,\ldots,\lambda_{k-k'})$. Note that $\lambda^{(k')}[0]_i=\lambda[0]_{i-k'}$. Setting $k'=k$ the assertion then follows. The proof proceeds by induction on $k'$ and uses \Cref{lem:transfer_matrix}. Set $k'=1$, then the assertion that
    \[
    t(x_1)v_\lambda=\sum_{d=0}^1 q^d\sum_{\nu}G_{\lambda^{(1)}//\nu[d]}(x_1|\tau^kt)v_\nu
    \]
    is a direct consequence of \Cref{lem:transfer_matrix} and the definition \cref{mcsG}.

    Now, let us assume that for some $1\le k'<k$ the induction hypothesis holds. Relabel the $x$-variables, then
    \[
    t(x_1)\cdots t(x_{k'+1})v_\lambda=\sum_{d=0}^{k'}q^d\sum_{\mu}G_{\lambda^{(k')}[0]//\mu[d]}(x_2,\ldots,x_{k'+1}|\tau^kt)t(x_{1})v_\mu\;.
    \]
    Applying \Cref{lem:transfer_matrix} and \Cref{lem:cylSVT} one deduces that each term resulting from the action of $t(x_{1})$ extends an existing cylindric set-valued tableau $(\tau^{(i)},\rho^{(i)})_{i=1}^{k'+1}$ in the above expansion (the labelling is shifted as we have re-labelled the variables) by an additional pair $(\tau^{(0)},\rho^{(0)})$ which defines a new inner boundary for the cylindric skew diagram. Namely, consider the formula from \Cref{lem:transfer_matrix},
    \[
    t(x_1)v_\mu=\sum_{d'=0,1}q^{d'}\sum_{\nu}\left(\sum_{\rho\\
        }\beta^{|\nu[d']/\rho|}\prod_{\substack{1\leqslant i\leqslant k\\
        (i,j)\in \mu^{(1)}[0]/\rho}}x_1\oplus t_{k+1+j-i}\right)v_\nu,
    \]
    and define $\tau^{(0)}=\nu[d+d']$ and $\rho^{(0)}=\rho[d]$, where $\nu,\rho$ and $d'=0,1$ are the respective quantities appearing in the last equation. The new strip has shape $\mu^{(1)}[d]/\nu[d+d']$. Moreover, because we have added an additional lattice row the cylindric loops in each pair $(\tau^{(i)},\rho^{(i)})$ from the existing set-valued tableau ($i=1,\ldots,k'+1$) need to be shifted, $(\tau^{(i)},\rho^{(i)})\to(\tilde\tau^{(i)},\tilde\rho^{(i)})$, where $\tilde\tau^{(i)}_j=\tau^{(i)}_{j-1}$ and $\tilde\rho^{(i)}_j=\rho^{(i)}_{j-1}$. Since this shifts the content of each box in the resulting skew shape $\lambda^{(k'+1)}/\tau^{(0)}$ by one, it compensates for shifting the variables $x_i\to x_{i+1}$. Thus, the weight of this newly constructed set-valued tableau is the one defined in \cref{cylG} with the equivariant parameters shifted by $k$. Finally, since $\nu[d+d']/\rho[d]|=|\nu[d']/\rho|$ the power  $\beta^{|\nu[d']/\rho|}$ from the added new strip gives the new power in the expansion of $G_{\lambda^{(k'+1)}[0]//\nu[d+d']}$ into cylindric skew Grothendieck polynomials from \cref{mcsG}, while the power $\beta^{|\mu[d]/\rho^{(1)}|}$ matches the number of boxes which contain both, a 1 and a 2-letter in the new set-valued tableau.
\end{proof}

%%%%%%%%%%%%%%%%%%%%%%%%%%%%%%%%%%%%%%%%%%%%%%%%%%%%%%%%%

\section{Other variations of partition functions} \Cref{prop:structure-constants} and \Cref{thm:alg1} give a precise meaning to the fact 
that the partition function 
$Z_\lambda^\nu(z_1, \ldots, z_k|\ve;q)$ is adapted to 
calculations of Schubert 
structure constants of the quantum K-ring $\QK_T(\Gr(k,n))$. 
There are other versions,
adapted to calculate KGW invariants, or to 
calculate generating functions of classes of 
two-point curve neighborhoods (see below). In this section, 
we briefly discuss these variants, and relate them to the
partition functions $Z_\lambda^\nu$.
An interesting question we do not address 
is to find the graphical calculus that 
gives these variants directly. However, we note that 
these variants are no longer polynomials, but power series
in the quantum variable $q$.

\subsection{A partition function adapted to two-point curve neighborhoods}
We start by recalling some relevant facts 
about two-point curve neighborhoods, following 
\cite{bcmp:projgw}, see also \cite{BCMP:qkfin,buch.m:nbhds}.
Fix partitions $\lambda, \nu$ and opposite Schubert varieties 
$X_\lambda, X^\nu$. 
For a fixed degree $d \ge 0$, the corresponding {\bf Gromov-Witten variety}
$\mathrm{GW}_d(X_\lambda, X^\nu)$ is the subvariety
of the moduli space of stable maps $\Mb_{0,3}(\Gr(k,n),d)$ defined by 
\[  \mathrm{GW}_d(X_\lambda, X^\nu) = \ev_1^{-1}(X_\lambda) \cap  \ev_2^{-1}(X^\nu) \/. \]
Here $\ev_i: \Mb_{0,3}(\Gr(k,n),d) \to \Gr(k,n)$ are the evaluation maps.
The $2$-point curve neighborhood of $X_\lambda, X^\nu$ is 
the image of the GW variety
inside the Grassmannian:
\[ \Gamma_d(X_\lambda, X^\nu):= \ev_3(\mathrm{GW}_d(X_\lambda, X^\nu)) \subset \Gr(k,n) \/. \]
It was proved in \cite{BCMP:qkfin} that the GW variety is either empty, or irreducible, rationally connected, with rational singularities. Moreover, the evaluation map $\ev_3: \mathrm{GW}_d(X_\lambda, X^\nu) \to  \Gamma_d(X_\lambda, X^\nu)$ is cohomologically trivial \cite[Thm.~4.1]{bcmp:projgw}. (See also \cite{BCMP:qkpos} for a unified account.) Furthermore, the two-point curve neighborhood is a projected Richardson variety \cite[\S 5]{bcmp:projgw}. The projected Richardson varieties, named {\bf positroid varieties} in \cite{knutson.lam.speyer:juggling}, are irreducible, rational, 
and with rational singularities (cf.~\cite{knutson.lam.speyer:projrich}). 
Combining all this with the positivity proved in \cite{anderson.griffeth.miller:Kpos} yields the following:

\begin{prop}\cite[Rmk.~4.4]{bcmp:projgw} The following identity holds: 
\begin{equation}\label{E:Gammad-exp} \cO_{\Gamma_d(X_\lambda, X^\nu)} = \sum_\mu \langle \cO_\lambda, \cO^\nu, \mathcal{I}^\mu \rangle_d \cO_\mu \end{equation}	
and the Gromov-Witten invariants are Graham positive, in the sense of \cite{anderson.griffeth.miller:Kpos}. 
\end{prop}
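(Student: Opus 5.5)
The plan is to derive \cref{E:Gammad-exp} from the projection formula, and then to read off positivity from the geometric facts recalled just before the statement. Let $M=\mathrm{GW}_d(X_\lambda,X^\nu)$ and let $\ev_3:M\to\Gamma_d:=\Gamma_d(X_\lambda,X^\nu)$ be the restricted evaluation map. If $M$ is empty there is nothing to prove. Both sides vanish, since $\langle \cO_\lambda,\cO^\nu,-\rangle_d$ is computed on $M$. So assume $M\neq\emptyset$.

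\emph{Step 1.} Write the invariant as a pushforward. By definition,
\[\langle \cO_\lambda,\cO^\nu,\mathcal{I}^\mu\rangle_d=\chi\bigl(\Mb_{0,3}(\Gr(k,n),d),\ev_1^*\cO_\lambda\cdot\ev_2^*\cO^\nu\cdot\ev_3^*\mathcal{I}^\mu\bigr).\]
I will use the moving lemma / Kleiman transversality argument used in \cite{BCMP:qkfin,bcmp:projgw}. Since $X_\lambda$ and $X^\nu$ are opposite, and $\ev_1\times\ev_2$ is equivariant for a group acting with a dense orbit, the product $\ev_1^*\cO_\lambda\cdot\ev_2^*\cO^\nu$ equals $[\cO_M]$. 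This needs the Tor-vanishing of \cite{Sierra}-type homogeneity arguments, as done in \cite[\S 3]{BCMP:qkfin}. The projection formula then gives
\[\langle \cO_\lambda,\cO^\nu,\mathcal{I}^\mu\rangle_d=\chi\bigl(\Gr(k,n),(\ev_3)_*[\cO_M]\cdot\mathcal{I}^\mu\bigr)=\langle (\ev_3)_*[\cO_M],\mathcal{I}^\mu\rangle.\]

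\emph{Step 2.} Use cohomological triviality. By \cite[Thm.~4.1]{bcmp:projgw}, $(\ev_3)_*\cO_M=\cO_{\Gamma_d}$ and $R^i(\ev_3)_*\cO_M=0$ for $i>0$, so $(\ev_3)_*[\cO_M]=[\cO_{\Gamma_d}]$. Expanding $[\cO_{\Gamma_d}]=\sum_\mu a_\mu\cO_\mu$, the duality \cref{E:ideal-sheaves} gives $a_\mu=\langle[\cO_{\Gamma_d}],\mathcal{I}^\mu\rangle$. By Step 1 this equals the Gromov--Witten invariant, which proves \cref{E:Gammad-exp}.

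\emph{Step 3.} Positivity. $\Gamma_d$ is a projected Richardson (positroid) variety, hence $T$-stable with rational singularities \cite{knutson.lam.speyer:projrich}. The theorem of \cite{anderson.griffeth.miller:Kpos} then applies: the Schubert expansion of the class of a $T$-stable subvariety with rational singularities has coefficients that are Graham positive with the sign $(-1)^{\dim\Gamma_d-\dim X_\mu}$. By Step 2 these coefficients are exactly the invariants.

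The main obstacle is Step 1, namely justifying $\ev_1^*\cO_\lambda\cdot\ev_2^*\cO^\nu=[\cO_M]$ equivariantly, since the usual general-translate argument is not $T$-equivariant. I would handle this as in \cite{BCMP:qkfin}. One uses that $M$ is a Richardson-type intersection in the sense of the $B\times B^-$-orbit structure on $\ev_1^{-1}(X_\lambda)$ and $\ev_2^{-1}(X^\nu)$. Both are Cohen--Macaulay (they are locally trivial fibrations over Schubert varieties) and they meet properly, so the higher Tor sheaves vanish.
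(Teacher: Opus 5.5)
Your proposal is correct and follows the route the paper takes via \cite[Rmk.~4.4]{bcmp:projgw}. You identify $\ev_1^*\cO_\lambda\cdot\ev_2^*\cO^\nu$ with $[\cO_{\mathrm{GW}_d(X_\lambda,X^\nu)}]$, push it forward along the cohomologically trivial map $\ev_3$ to get $[\cO_{\Gamma_d(X_\lambda,X^\nu)}]$, read off coefficients by pairing with $\mathcal{I}^\mu$, and get Graham positivity from \cite{anderson.griffeth.miller:Kpos} because $\Gamma_d$ is a $T$-stable positroid variety with rational singularities.

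One caution on Step 1: ``Cohen--Macaulay and meeting properly implies vanishing of higher Tor'' needs a regular ambient, and the coarse space $\Mb_{0,3}(\Gr(k,n),d)$ is not regular. So either work on the smooth stack, or use the argument of \cite{BCMP:qkfin} that you also cite: first get Tor-vanishing for a general translate $gX_\lambda$, then write $g=b'b$ with $b$ stabilizing $X_\lambda$ and $b'$ stabilizing $X^\nu$, so that $\ev_1^{-1}(gX_\lambda)\cap\ev_2^{-1}(X^\nu)=b'\cdot\mathrm{GW}_d(X_\lambda,X^\nu)$.
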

We ask the reader to observe the similarity of this class to the expression from \Cref{cor:part-as-class}, which gives the class represented by the partition function $Z_\lambda^\nu$. Motivated by this, 
we now consider a variant of the partition function which recovers 
the expansion from \Cref{E:Gammad-exp}. Set 
\begin{equation}\label{def:ZGamma} 
Z_{\Gamma,\lambda}^\nu(z_1, \ldots, z_k|\ve;q):= 
\frob{\cO_\lambda \star \cO^\nu, \lambda_{z_1}(\cQ^\vee) \star \cdots\star  \lambda_{z_k}(\cQ^\vee)} \/.
\end{equation}
\begin{prop}\label{prop:part-function-Gamma} 
The following hold:

(a) The partition function $Z_{\Gamma,\lambda}^\nu(z_1, \ldots, z_k|\ve;q)$ 
may be expressed in terms of KGW invariants:

\begin{equation} \begin{split} Z_{\Gamma,\lambda}^\nu(z_1, \ldots, z_k|\ve;q)& = \sum_{\mu}G_{\mu}(1+z|1-\ve^{-1})\frob{\cO_\lambda \star \cO^\nu, \mathcal{I}^\mu} \\
& = \sum_{\mu,d} G_{\mu}(1+z|1-\ve^{-1}) \langle \cO_\lambda, \cO^\nu, \mathcal{I}^\mu \rangle_d q^d \end{split} \end{equation}

(b) The partition functions $Z$ and $Z_\Gamma$ are related by: 
\[ \begin{split} Z_{\Gamma,\lambda}^\nu(z_1, \ldots, z_k|\ve;q)& = \sum_\eta \frob{\cO^ \nu, \cO_\eta} Z_\lambda^\eta (z_1, \ldots, z_k|\ve;q) \\
&= \sum_\eta \frac{q^{\mathrm{dist}(\eta,\nu)}}{1-q}Z_\lambda^\eta(z_1, \ldots, z_k|\ve;q) \end{split} \]
where $\mathrm{dist}(\eta,\nu)$ is the distance between $X^\nu$ and $X_\eta$ from \cref{E:QKpairing}.
\end{prop}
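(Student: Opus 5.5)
For part (a), I will expand the second slot of the pairing in \eqref{def:ZGamma}. By \Cref{lemma:noq} the product $\lambda_{z_1}(\cQ^\vee)\star\cdots\star\lambda_{z_k}(\cQ^\vee)$ equals its classical counterpart. Then \Cref{prop:expansion-ly}(a) gives
\[
\lambda_{z_1}(\cQ^\vee)\star\cdots\star\lambda_{z_k}(\cQ^\vee)=\sum_\mu G_\mu(1+z|1-\ve^{-1})\,\mathcal{I}^\mu .
\]
Since the quantum pairing is $\K_T(\pt)[\![q]\!]$-bilinear, this yields the first equality
\[
Z_{\Gamma,\lambda}^\nu=\sum_\mu G_\mu(1+z|1-\ve^{-1})\,\frob{\cO_\lambda\star\cO^\nu,\mathcal{I}^\mu}.
\]
For the second equality I will use the defining property \eqref{E:qKmetric} of the quantum K-product. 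With $a=\cO_\lambda$, $b=\cO^\nu$ and $c=\mathcal{I}^\mu$ it gives
\[
\frob{\cO_\lambda\star\cO^\nu,\mathcal{I}^\mu}=\sum_{d\ge0}\langle\cO_\lambda,\cO^\nu,\mathcal{I}^\mu\rangle_d\,q^d .
\]
Substituting this proves (a). The only subtlety is that $\mathcal{I}^\mu$ here is the classical ideal sheaf, not the quantum one; but \eqref{E:qKmetric} holds for arbitrary classes, so no dual-basis property is needed.

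For part (b), I will expand $\cO^\nu$ in the Schubert basis $\{\cO_\eta\}$ of $\QK_T(\Gr(k,n))$, using the quantum dual basis $\mathcal{I}^\eta_q$ from \eqref{E:qideals}. Any class $a$ can be written as $a=\sum_\eta c_\eta\,\mathcal{I}^\eta_q$ with $c_\eta=\frob{a,\cO_\eta}$. Rather than expanding $\cO^\nu$ directly, I expand it in the basis $\{\mathcal{I}^\eta_q\}$:
\[
\cO^\nu=\sum_\eta \frob{\cO^\nu,\cO_\eta}\,\mathcal{I}^\eta_q .
\]
This holds because pairing both sides with $\cO_{\eta'}$ gives the same value, and the pairing is non-degenerate.

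Substituting this into $\cO_\lambda\star\cO^\nu$, using bilinearity of $\star$ and of the pairing, and then \eqref{defZ}, gives
\[
Z^\nu_{\Gamma,\lambda}=\sum_\eta\frob{\cO^\nu,\cO_\eta}\,\frob{\cO_\lambda\star\mathcal{I}^\eta_q,\lambda_{z_1}(\cQ^\vee)\star\cdots\star\lambda_{z_k}(\cQ^\vee)}=\sum_\eta\frob{\cO^\nu,\cO_\eta}\,Z^\eta_\lambda .
\]
Finally, \eqref{E:QKpairing} evaluates $\frob{\cO_\eta,\cO^\nu}=q^{\mathrm{dist}(\eta,\nu)}/(1-q)$, which gives the second form of (b). This uses symmetry of the pairing and the fact that $\mathrm{dist}$ is taken between $X_\eta$ and $X^\nu$, as in \eqref{E:QKpairing}.

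The main point to get right is the direction of the dual-basis expansion, i.e.\ choosing $\cO^\nu=\sum_\eta\frob{\cO^\nu,\cO_\eta}\mathcal{I}^\eta_q$ rather than an expansion in the $\cO_\eta$. The rest is bookkeeping with power series in $q$, which is harmless since every sum over $\eta$ and $\mu$ is finite.
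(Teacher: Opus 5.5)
Your proof is correct and follows the paper's own route. For (a) you expand $\lambda_{z_1}(\cQ^\vee)\star\cdots\star\lambda_{z_k}(\cQ^\vee)$ into the ideal sheaves $\mathcal{I}^\mu$ via \Cref{prop:expansion-ly}(a) and apply the defining property of $\star$. For (b) you expand $\cO^\nu=\sum_\eta\frob{\cO^\nu,\cO_\eta}\mathcal{I}^\eta_q$ and evaluate the two-point pairing by the formula of \cite{BCLM:euler}, exactly as the paper does; your added remarks only spell out details the paper leaves implicit.
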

\begin{proof} Part (a) follows the same outline as in the proof of \Cref{prop:structure-constants},
using the expansion of $\lambda_{z_1}(\cQ^\vee) \star \ldots  \lambda_{z_k}(\cQ^\vee)$ into opposite ideal sheaves from \Cref{prop:expansion-ly}. Part (b) follows because
\[ \cO^\nu = \sum_\eta \frob{\cO^\nu, \cO_\eta} \mathcal{I}^\eta_q = \sum_\eta \frac{q^{\mathrm{dist}(\eta,\nu)}}{1-q}\mathcal{I}^\eta_q \/,\]
where the last equality is proved in the main theorem from \cite{BCLM:euler}. 
\end{proof}

\begin{cor} Regard the variables $-z_i$ as K-theoretic Chern roots of $\cS$,
i.e., $\prod (1-yz_i) = \lambda_y(\cS)$. Then 
\[ Z_{\Gamma, \lambda}^\nu(z|\ve;q) = \sum_{d \ge 0} q^d [\cO_{\Gamma_d(X_\lambda, X^\nu)}] \]
as elements in $\K_T(\Gr(k,n))[\![q]\!]$.
\end{cor}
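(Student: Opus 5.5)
The plan is to combine \Cref{prop:part-function-Gamma}(a) with the curve neighborhood expansion \cref{E:Gammad-exp}, in the same way that \Cref{cor:part-as-class}(a) was deduced from \Cref{cor:Amu=I} and \Cref{prop:structure-constants}. By \Cref{prop:part-function-Gamma}(a),
\[ Z_{\Gamma,\lambda}^\nu(z|\ve;q)=\sum_{\mu,d} G_{\mu}(1+z|1-\ve^{-1})\, \langle \cO_\lambda, \cO^\nu, \mathcal{I}^\mu \rangle_d\, q^d . \]
So it suffices to show that, under the identification of $-z_i$ with the K-theoretic Chern roots of $\cS$, the polynomial $G_\mu(1+z|1-\ve^{-1})$ represents the Schubert class $\cO_\mu$. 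Granting this, the right-hand side becomes $\sum_d q^d \sum_\mu \langle \cO_\lambda, \cO^\nu, \mathcal{I}^\mu \rangle_d \cO_\mu$. By \cref{E:Gammad-exp} this equals $\sum_d q^d[\cO_{\Gamma_d(X_\lambda,X^\nu)}]$, the identity holding coefficientwise in $q$ in $\K_T(\Gr(k,n))[\![q]\!]$.

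For the representability claim, I would use localization and the injectivity of the localization map, exactly as in the proof of \Cref{cor:Amu=I}. The class represented by a symmetric polynomial $P(z)$ is the one whose restriction to $\bfe_\eta$ is $P(-\ve_\eta)$. By \cref{E:locOlambda}, $\langle \cO_\mu,\bfe_\eta\rangle = G_\mu(1-\ve_\eta|1-\ve^{-1})$, which is the specialization of $G_\mu(1+z|1-\ve^{-1})$ at $z=-\ve_\eta$. Hence the two classes have the same localizations and therefore coincide.

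An alternative derivation runs through \Cref{prop:expansion-ly}(a). Pairing $\lambda_{z_1}(\cQ^\vee)\cdots\lambda_{z_k}(\cQ^\vee)=\sum_\mu G_\mu\,\mathcal{I}^\mu$ against $\cO_\eta$ gives $G_\eta(1+z|1-\ve^{-1})=\langle \lambda_{z_1}(\cQ^\vee)\cdots\lambda_{z_k}(\cQ^\vee),\cO_\eta\rangle$. Specializing at $z=-\ve_\eta$ and applying \Cref{lemma:loc-ideal} turns this into $\langle\bfe_\eta,\cO_\mu\rangle$, which confirms the localization values.

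The main obstacle is keeping the conventions consistent: the variables $\ve$ versus $\ve^{-1}$, and the distinction between $\cO_\mu$ and $\cO^\mu$ (the ordering $\ve$ versus $\ve_{rev}$ in \Cref{prop:expansion-ly}(c)). I would make sure the specialization in \cref{E:locOlambda} matches the convention $\prod(1-yz_i)=\lambda_y(\cS)$ by checking the case $\mu=\emptyset$, where $G_\emptyset=1=\cO_\emptyset$, and a divisor case on $\bP^1$. The sum over $d$ causes no difficulty, since each coefficient of $q^d$ is a finite sum over $\mu$.
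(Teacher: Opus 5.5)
Your proposal is correct and follows the paper's argument: apply \Cref{prop:part-function-Gamma}(a), read $G_\mu(1+z|1-\ve^{-1})$ as the class $\cO_\mu$ under the Chern-root convention, and conclude with \cref{E:Gammad-exp}. The paper leaves the identification of $G_\mu(1+z|1-\ve^{-1})$ with $\cO_\mu$ implicit. Your localization argument via \cref{E:locOlambda} fills it in correctly, as does your alternative via \Cref{prop:expansion-ly}(a) and \Cref{lemma:loc-ideal} (mirroring \Cref{cor:Amu=I}); in that alternative, pair against $\cO_\mu$ rather than $\cO_\eta$ so the indices match.
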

\begin{proof} Under the given identification of variables
$z_i$, \Cref{prop:part-function-Gamma}(a) states that
\[Z_{\Gamma, \lambda}^\nu(z|\ve;q) =\sum_{\mu, d} \langle \cO_\lambda, \cO^\nu, \mathcal{I}^\mu \rangle_d q^d \cO_\mu \]
Then the claim follows from \Cref{E:Gammad-exp}.  
\end{proof}

\subsection{A partition function adapted to K-theoretic GW invariants}
We record below another variant which is adapted to 
calculations of KGW invariants 
$\langle \cO_\lambda, \cO_\mu,\cO_\nu \rangle_d$. 
The proofs are only outlined, with 
details left to the reader. 
\begin{prop} Consider the variant of the partition function:
\[ Z_{\mathrm{GW},\lambda}^\nu(z_1, \ldots, z_k|\ve;q):= \frob{\cO_\lambda \star \cO_\nu, \lambda_{z_1}(\cQ^\vee) \star \cdots\star  \lambda_{z_k}(\cQ^\vee)} \]
Then the following hold:

(a) The partition function may be expressed in terms of KGW invariants:
\begin{equation} \begin{split} Z_{\mathrm{GW},\lambda}^\nu(z_1, \ldots, z_k|\ve;q)& =\sum_{\mu} A_{\mu}(z|\ve)\;\frob{\cO_\lambda \star\cO_\nu, \cO_\mu} \\
& = \sum_{\mu,d} A_{\mu}(z|\ve) \langle \cO_\lambda, \cO_\nu, \cO_\mu \rangle_d q^d \end{split} \end{equation}
where the coefficients $A_{\mu}(z|\ve)$ are those from \Cref{prop:expansion-ly}.

(b) The partition function $Z_{\mathrm{GW},\lambda}^\nu$ may be expressed in terms of 
the partition function $Z$ by: 
\begin{equation}\begin{split} Z_{\mathrm{GW},\lambda}^\nu(z_1, \ldots, z_k|\ve;q)& = 
\sum_\eta \frob{\cO_\nu, \cO_\eta} Z_\lambda^\eta (z_1, \ldots, z_k|\ve;q) \\
%& = \sum_{\eta,d} \langle \cO_\nu, \cO_\eta}_d q^d Z_\lambda^\eta (z_1, \ldots, z_k;q)\\
& = \sum_{\eta} \sum_\alpha c_{\alpha,\eta} \frac{q^{\mathrm{dist}(\nu,\alpha)}}{1-q} Z_\lambda^\eta (z_1, \ldots, z_k|\ve;q) \end{split}
\end{equation}
where $c_{\alpha,\eta}$ are the coefficients from the expansion $\cO_\eta = \sum c_{\alpha,\eta} \cO^\alpha$.\end{prop}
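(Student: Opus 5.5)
Both parts follow the pattern of \Cref{prop:structure-constants} and \Cref{prop:part-function-Gamma}, using the Frobenius property of the QK pairing and the expansion from \Cref{prop:expansion-ly}.

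For part (a), we expand $\lambda_{z_1}(\cQ^\vee)\star\cdots\star\lambda_{z_k}(\cQ^\vee)$ into Schubert classes using \Cref{prop:expansion-ly}(c). By \Cref{lemma:noq} the coefficients are the $q$-independent polynomials $A_\mu(z|\ve)$. Substituting this into the definition of $Z_{\mathrm{GW},\lambda}^\nu$ and using $\K_T(\pt)[\![q]\!]$-bilinearity of $\frob{\cdot,\cdot}$ gives
\[ Z_{\mathrm{GW},\lambda}^\nu=\sum_\mu A_\mu(z|\ve)\frob{\cO_\lambda\star\cO_\nu,\cO_\mu}. \]
The second equality is then just the defining property \cref{E:qKmetric} of the quantum K product: $\frob{a\star b,c}=\sum_d\langle a,b,c\rangle_d q^d$.

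For part (b), we expand $\cO_\nu$ in the basis dual to the Schubert basis. Since the quantum ideal sheaves $\mathcal{I}^\eta_q$ satisfy \cref{E:qideals}, non-degeneracy of the pairing gives
\[ \cO_\nu=\sum_\eta\frob{\cO_\nu,\cO_\eta}\mathcal{I}^\eta_q . \]
To check this, pair both sides with $\cO_\eta$ and use symmetry of the pairing. Inserting this expansion into $\frob{\cO_\lambda\star\cO_\nu,\,\prod_s\lambda_{z_s}(\cQ^\vee)}$ and comparing with \cref{defZ} gives
\[ Z_{\mathrm{GW},\lambda}^\nu=\sum_\eta\frob{\cO_\nu,\cO_\eta}Z_\lambda^\eta. \]
It remains to compute $\frob{\cO_\nu,\cO_\eta}$. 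We write $\cO_\eta=\sum_\alpha c_{\alpha,\eta}\cO^\alpha$, a finite expansion with coefficients $c_{\alpha,\eta}\in\K_T(\pt)$ (these are classical change-of-basis coefficients). By \cref{E:QKpairing},
\[ \frob{\cO_\nu,\cO^\alpha}=\frac{q^{\mathrm{dist}(\nu,\alpha)}}{1-q}, \]
and bilinearity then gives the stated formula.

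Everything here is formal once the earlier results are in place, so there is no real obstacle. The only points needing care are the following. First, we must correctly use symmetry of the QK pairing and the order of arguments in $\mathrm{dist}(\nu,\alpha)$, which is the minimal degree joining $X_\nu$ and $X^\alpha$. Second, we must note that the sums are now power series in $q$ because of the factor $1/(1-q)$; this is harmless since all identities hold in $\K_T(\pt)[\![q]\!][z]$.
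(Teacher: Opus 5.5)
Your proposal is correct and follows the paper's own proof. For (a), you expand $\lambda_{z_1}(\cQ^\vee)\star\cdots\star\lambda_{z_k}(\cQ^\vee)$ into the $\cO_\mu$ via \Cref{prop:expansion-ly} and then apply \cref{E:qKmetric}; for (b), you write $\cO_\nu=\sum_\eta\frob{\cO_\nu,\cO_\eta}\mathcal{I}^\eta_q$, expand $\cO_\eta$ into opposite Schubert classes and use \cref{E:QKpairing}, and the extra details you supply (the symmetry check for the dual-basis expansion and the classical, finite change of basis) are sound.
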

\begin{proof} Part (a) follows from expanding 
$\lambda_{z_1}(\cQ^\vee) \star \ldots \star \lambda_{z_k}(\cQ^\vee)$ into 
Schubert classes $\cO_\mu$, then using \Cref{prop:expansion-ly}. 
Part (b) relies on a similar argument to that in \Cref{prop:part-function-Gamma}, 
using that $\cO_\nu = \sum_\eta \frob{\cO_\nu, \cO_\eta} \mathcal{I}^{\eta}_q$, expanding 
$\cO_\eta$ into opposite Schubert classes, and finally using the formula for the 2-point invariants
$\frob{\cO_\nu, \cO^\alpha}$.  
\end{proof}

%%%%%%%%%%%%%%%%%%%%%%%%%%%%%%%%%
%%%%%%%%%%%%%%%%%%%%%%%%%%%
\bibliographystyle{halpha}
\bibliography{BIB.bib}

@book {Baxter:book,
    AUTHOR = {Baxter, Rodney J.},
     TITLE = {Exactly solved models in statistical mechanics},
 PUBLISHER = {Academic Press, Inc. [Harcourt Brace Jovanovich, Publishers],
              London},
      YEAR = {1982},
     PAGES = {xii+486},
      ISBN = {0-12-083180-5},
   MRCLASS = {82-02 (82A05 82A68)},
  MRNUMBER = {690578},
MRREVIEWER = {J. Groeneveld},
}

@article{faddeev1990lectures,
  title={Lectures on quantum inverse scattering method},
  author={Faddeev, LD},
  journal={New Problems, Methods and Techniques in Quantum Field Theory and Statistical Mechanics. Series on Advances in Statistical Mechanics},
  volume={6},
  pages={7--54},
  year={1990}
}

@article{gorbounov2017quantum,
	author = {Gorbounov, Vassily and Korff, Christian},
	journal = {Advances in Mathematics},
	pages = {282--356},
	publisher = {Elsevier},
	title = {Quantum integrability and generalised quantum {S}chubert calculus},
	volume = {313},
	year = {2017}}

@book{chriss2009representation,
  title={Representation theory and complex geometry},
  author={Chriss, Neil and Ginzburg, Victor},
  year={2009},
  publisher={Springer Science \& Business Media}}

@book {hirzebruch:topological,
    AUTHOR = {Hirzebruch, Friedrich},
     TITLE = {Topological methods in algebraic geometry},
    SERIES = {Classics in Mathematics},
 PUBLISHER = {Springer-Verlag, Berlin},
      YEAR = {1995},
     PAGES = {xii+234},
      ISBN = {3-540-58663-6},
   MRCLASS = {57-02 (01A75 14-02)},
  MRNUMBER = {1335917},
}

@incollection{brion:flagv,
	Author = {Brion, Michel},
	Booktitle = {Topics in cohomological studies of algebraic varieties},
	Pages = {33--85},
	Publisher = {Birkh\"auser, Basel},
	Series = {Trends Math.},
	Title = {Lectures on the geometry of flag varieties},
	Year = {2005}}

@article {givental:onwdvv,
    AUTHOR = {Givental, Alexander},
     TITLE = {On the {WDVV} equation in quantum {$K$}-theory},
      NOTE = {Dedicated to William Fulton on the occasion of his 60th
              birthday},
   JOURNAL = {Michigan Math. J.},
  FJOURNAL = {The Michigan Mathematical Journal},
    VOLUME = {48},
      YEAR = {2000},
     PAGES = {295--304},
      ISSN = {0026-2285},
   MRCLASS = {14N35 (19E08 53D45)},
  MRNUMBER = {MR1786492 (2001m:14078)},
MRREVIEWER = {Gary P. Kennedy},
}

@article {lee:QK,
    AUTHOR = {Lee, Y.-P.},
     TITLE = {Quantum {$K$}-theory. {I}. {F}oundations},
   JOURNAL = {Duke Math. J.},
  FJOURNAL = {Duke Mathematical Journal},
    VOLUME = {121},
      YEAR = {2004},
    NUMBER = {3},
     PAGES = {389--424},
      ISSN = {0012-7094},
     CODEN = {DUMJAO},
   MRCLASS = {14N35 (19E08 53D45 55N15)},
  MRNUMBER = {MR2040281 (2005f:14107)},
MRREVIEWER = {Andrew Kresch},
}

@article{buch.kresch.ea:gromov-witten,
	author= {A.~S. Buch and A.~Kresch and H.~Tamvakis}, 
	Title = {Gromov-{W}itten invariants on {G}rassmannians}, 
  	journal={J. Amer. Math. Soc.},
	volume={16}, 
	year={2003}, 
	number={4}, 
	pages={901--915(electronic)}
}

@article{BCMP:qkpos,
	author= {A.~S. Buch and Chaput, Pierre-Emmanuel and Mihalcea,
              Leonardo C. and Perrin, Nicolas}, 
	Title = {Positivity of minuscule quantum {K} theory}, 
  	journal={arxiv:2205.08630}, 
	year={2022},
}

@article {buch.m:nbhds,
 AUTHOR = {Buch, Anders S. and Mihalcea, Leonardo C.},
     TITLE = {Curve neighborhoods of {S}chubert varieties},
   JOURNAL = {J. Differential Geom.},
  FJOURNAL = {Journal of Differential Geometry},
    VOLUME = {99},
      YEAR = {2015},
    NUMBER = {2},
     PAGES = {255--283},
      ISSN = {0022-040X},
   MRCLASS = {14L30 (14M17 14N35)},
  MRNUMBER = {3302040},
MRREVIEWER = {Pierre-Emmanuel Chaput},
       URL = {http://projecteuclid.org/euclid.jdg/1421415563},
}

@article {BCMP:qkfin,
    AUTHOR = {Buch, Anders S. and Chaput, Pierre-Emmanuel and Mihalcea,
              Leonardo C. and Perrin, Nicolas},
     TITLE = {Finiteness of cominuscule quantum {$K$}-theory},
   JOURNAL = {Ann. Sci. \'{E}c. Norm. Sup\'{e}r. (4)},
  FJOURNAL = {Annales Scientifiques de l'\'{E}cole Normale Sup\'{e}rieure. Quatri\`eme
              S\'{e}rie},
    VOLUME = {46},
      YEAR = {2013},
    NUMBER = {3},
     PAGES = {477--494 (2013)},
      ISSN = {0012-9593},
   MRCLASS = {19Exx (14M15 14N35 81R99)},
  MRNUMBER = {3099983},
MRREVIEWER = {Dragos Nicolae Oprea},
       DOI = {10.24033/asens.2194},
       URL = {https://doi.org/10.24033/asens.2194},
}

@article {BCLM:euler,
    AUTHOR = {Buch, Anders S. and Chung, Sjuvon and Li, Changzheng and
              Mihalcea, Leonardo C.},
     TITLE = {Euler characteristics in the quantum {$K$}-theory of flag
              varieties},
   JOURNAL = {Selecta Math. (N.S.)},
  FJOURNAL = {Selecta Mathematica. New Series},
    VOLUME = {26},
      YEAR = {2020},
    NUMBER = {2},
     PAGES = {Paper No. 29, 11},
      ISSN = {1022-1824},
   MRCLASS = {14N35 (14M15 14N15 19E08)},
  MRNUMBER = {4083680},
MRREVIEWER = {Guillermo Corti\~{n}as},
       DOI = {10.1007/s00029-020-00557-7},
       URL = {https://doi.org/10.1007/s00029-020-00557-7}
}

@article{ACTI:finiteness,
  title={On the finiteness of quantum {K}-theory of a homogeneous space},
  author={Anderson, David and Chen, Linda and Tseng, Hsian-Hua and Iritani, Hiroshi},
  journal={arXiv preprint arXiv:1804.04579},
  year={2018}
}

@article{kato:loop,
  title={Loop structure on equivariant {K}-theory of semi-infinite flag manifolds},
  author={Kato, Syu},
  journal={arXiv preprint arXiv:1805.01718},
  year={2018}
}

@article {buch.m:qk,
    AUTHOR = {Buch, Anders S. and Mihalcea, Leonardo C.},
     TITLE = {Quantum {$K$}-theory of {G}rassmannians},
   JOURNAL = {Duke Math. J.},
  FJOURNAL = {Duke Mathematical Journal},
    VOLUME = {156},
      YEAR = {2011},
    NUMBER = {3},
     PAGES = {501--538},
      ISSN = {0012-7094},
   MRCLASS = {14N35 (14C35 14M15 19E08)},
  MRNUMBER = {2772069},
MRREVIEWER = {Hsian-Hua Tseng},
       DOI = {10.1215/00127094-2010-218},
       URL = {https://doi.org/10.1215/00127094-2010-218},
}

@article {bcmp:projgw,
    AUTHOR = {Buch, Anders S. and Chaput, Pierre-Emmanuel and Mihalcea,
              Leonardo C. and Perrin, Nicolas},
     TITLE = {Projected {G}romov-{W}itten varieties in cominuscule spaces},
   JOURNAL = {Proc. Amer. Math. Soc.},
  FJOURNAL = {Proceedings of the American Mathematical Society},
    VOLUME = {146},
      YEAR = {2018},
    NUMBER = {9},
     PAGES = {3647--3660},
      ISSN = {0002-9939},
   MRCLASS = {14N35 (14M15 14M20 14M22 14N15 19E08)},
  MRNUMBER = {3825822},
       DOI = {10.1090/proc/13839},
       URL = {https://doi.org/10.1090/proc/13839},
}

@article {buch:groth-classes,
    AUTHOR = {Buch, Anders Skovsted},
     TITLE = {Grothendieck classes of quiver varieties},
   JOURNAL = {Duke Math. J.},
  FJOURNAL = {Duke Mathematical Journal},
    VOLUME = {115},
      YEAR = {2002},
    NUMBER = {1},
     PAGES = {75--103},
      ISSN = {0012-7094,1547-7398},
   MRCLASS = {14C35 (05E10 19E08)},
  MRNUMBER = {1932326},
MRREVIEWER = {Gary\ P.\ Kennedy},
       DOI = {10.1215/S0012-7094-02-11513-0},
       URL = {https://doi.org/10.1215/S0012-7094-02-11513-0},
}

@article {MNS:left,
    AUTHOR = {Mihalcea, Leonardo C. and Naruse, Hiroshi and Su, Changjian},
     TITLE = {Left {D}emazure-{L}usztig operators on equivariant (quantum)
              cohomology and {K}-theory},
   JOURNAL = {Int. Math. Res. Not. IMRN},
  FJOURNAL = {International Mathematics Research Notices. IMRN},
      YEAR = {2022},
    NUMBER = {16},
     PAGES = {12096--12147},
      ISSN = {1073-7928,1687-0247},
   MRCLASS = {14C35 (14M15 19L47 20G05 22E46)},
  MRNUMBER = {4465997},
MRREVIEWER = {Changlong\ Zhong},
       DOI = {10.1093/imrn/rnab049},
       URL = {https://doi.org/10.1093/imrn/rnab049},
}

@article{knutson:noncomplex,
  title={A {S}chubert calculus recurrence from the noncomplex {W}-action on {G/B}},
  author={Knutson, Allen},
  journal={arXiv preprint math/0306304},
  year={2003}
}

@article{harada.landweber.sjamaar:divided,
  title={Divided differences and the {W}eyl character formula in equivariant {K}-theory},
  author={Harada, Megumi and Landweber, Gregory D and Sjamaar, Reyer},
  journal={Mathematical Research Letters},
  volume={17},
  number={3},
  pages={507--527},
  year={2010},
  publisher={International Press of Boston},
}

@article {postnikov:affine,
    AUTHOR = {Postnikov, Alexander},
     TITLE = {Affine approach to quantum {S}chubert calculus},
   JOURNAL = {Duke Math. J.},
  FJOURNAL = {Duke Mathematical Journal},
    VOLUME = {128},
      YEAR = {2005},
    NUMBER = {3},
     PAGES = {473--509},
      ISSN = {0012-7094,1547-7398},
   MRCLASS = {05E05 (14M15 14N15 14N35)},
  MRNUMBER = {2145741},
MRREVIEWER = {Harry\ Tamvakis},
       DOI = {10.1215/S0012-7094-04-12832-5},
       URL = {https://doi.org/10.1215/S0012-7094-04-12832-5},
}

@incollection {okounkov:lectures,
    AUTHOR = {Okounkov, Andrei},
     TITLE = {Lectures on {K}-theoretic computations in enumerative
              geometry},
 BOOKTITLE = {Geometry of moduli spaces and representation theory},
    SERIES = {IAS/Park City Math. Ser.},
    VOLUME = {24},
     PAGES = {251--380},
 PUBLISHER = {Amer. Math. Soc., Providence, RI},
      YEAR = {2017},
      ISBN = {978-1-4704-3574-5},
   MRCLASS = {14N35 (14C35 19M05)},
  MRNUMBER = {3752463},
MRREVIEWER = {Emily\ Clader},
       DOI = {10.1090/pcms/024},
       URL = {https://doi.org/10.1090/pcms/024},
}

@article {KPSZ:quivers,
    AUTHOR = {Koroteev, Peter and Pushkar, Petr P. and Smirnov, Andrey V.
              and Zeitlin, Anton M.},
     TITLE = {Quantum {K}-theory of quiver varieties and many-body systems},
   JOURNAL = {Selecta Math. (N.S.)},
  FJOURNAL = {Selecta Mathematica. New Series},
    VOLUME = {27},
      YEAR = {2021},
    NUMBER = {5},
     PAGES = {Paper No. 87, 40},
      ISSN = {1022-1824,1420-9020},
   MRCLASS = {19L47 (14C35 16G20)},
  MRNUMBER = {4308933},
MRREVIEWER = {\Dbar\cftil o{} Ng\d oc Di\cfudot ep},
       DOI = {10.1007/s00029-021-00698-3},
       URL = {https://doi.org/10.1007/s00029-021-00698-3},
}

@article{summers:qideal,
  author  = {Summers, Kevin},
  title   = {A Dual Basis for the Equivariant Quantum
             {K}-Theory of Cominuscule Varieties},
  journal = {Journal of Algebra},
  volume  = {710},
  pages   = {130--163},
  year    = {2027},
  doi     = {10.1016/j.jalgebra.2026.05.035},
  eprint  = {2407.02703},
  archivePrefix = {arXiv}
}

@article {mcnamara:factorial,
    AUTHOR = {McNamara, Peter J.},
     TITLE = {Factorial {G}rothendieck polynomials},
   JOURNAL = {Electron. J. Combin.},
  FJOURNAL = {Electronic Journal of Combinatorics},
    VOLUME = {13},
      YEAR = {2006},
    NUMBER = {1},
     PAGES = {Research Paper 71, 40},
      ISSN = {1077-8926},
   MRCLASS = {05E05},
  MRNUMBER = {2240776},
MRREVIEWER = {Pavlo\ Pylyavskyy},
       DOI = {10.37236/1097},
       URL = {https://doi.org/10.37236/1097},
}

@article {gorbounov.korff.stroppel,
    AUTHOR = {Gorbunov, V. G. and Korff, Ch. and Stroppel, C.},
     TITLE = {Yang-{B}axter algebras, convolution algebras, and
              {G}rassmannians},
   JOURNAL = {Uspekhi Mat. Nauk},
  FJOURNAL = {Uspekhi Matematicheskikh Nauk},
    VOLUME = {75},
      YEAR = {2020},
    NUMBER = {5(455)},
     PAGES = {3--58},
      ISSN = {0042-1316,2305-2872},
   MRCLASS = {16T25 (14M15 16G20 81R12)},
  MRNUMBER = {4154847},
       DOI = {10.4213/rm9959},
       URL = {https://doi.org/10.4213/rm9959},
}

@article{gorbounov.korff:unpublished,
author = {Gorbounov, Vassily and Korff, Christian},
title = {Equivariant quantum cohomology and Yang-Baxter algebras},
journal={available on arXiv:1402.2907v2},
year={2014},
}

@article{BCMP:qkchev,
    AUTHOR = {Buch, Anders S. and Chaput, Pierre-Emmanuel and Mihalcea,
              Leonardo C. and Perrin, Nicolas},
     TITLE = {A {C}hevalley formula for the equivariant quantum {$K$}-theory of cominuscule varieties},
   JOURNAL = {Algebr. Geom.},
  FJOURNAL = {Algebraic Geometry},
    VOLUME = {5},
      YEAR = {2018},
    NUMBER = {5},
     PAGES = {568--595},
      ISSN = {2313-1691,2214-2584},
   MRCLASS = {14N35 (14C35 14M15 14N15 19L47)},
  MRNUMBER = {3847206},
MRREVIEWER = {Feng\ Qu},
       DOI = {10.14231/ag-2018-015},
       URL = {https://doi.org/10.14231/ag-2018-015},
}

@article {anderson.griffeth.miller:Kpos,
    AUTHOR = {Anderson, Dave and Griffeth, Stephen and Miller, Ezra},
     TITLE = {Positivity and {K}leiman transversality in equivariant
              {$K$}-theory of homogeneous spaces},
   JOURNAL = {J. Eur. Math. Soc. (JEMS)},
  FJOURNAL = {Journal of the European Mathematical Society (JEMS)},
    VOLUME = {13},
      YEAR = {2011},
    NUMBER = {1},
     PAGES = {57--84},
      ISSN = {1435-9855,1435-9863},
   MRCLASS = {14C35 (14C17 14M15 19L47)},
  MRNUMBER = {2735076},
MRREVIEWER = {Harry\ Tamvakis},
       DOI = {10.4171/JEMS/244},
       URL = {https://doi-org.ezproxy.lib.vt.edu/10.4171/JEMS/244},
}

@article {knutson.lam.speyer:projrich,
    AUTHOR = {Knutson, Allen and Lam, Thomas and Speyer, David E.},
     TITLE = {Projections of {R}ichardson varieties},
   JOURNAL = {J. Reine Angew. Math.},
  FJOURNAL = {Journal f\"ur die Reine und Angewandte Mathematik. [Crelle's
              Journal]},
    VOLUME = {687},
      YEAR = {2014},
     PAGES = {133--157},
      ISSN = {0075-4102,1435-5345},
   MRCLASS = {05E18 (14L35 14M15)},
  MRNUMBER = {3176610},
MRREVIEWER = {Anthony\ Henderson},
       DOI = {10.1515/crelle-2012-0045},
       URL = {https://doi-org.ezproxy.lib.vt.edu/10.1515/crelle-2012-0045},
}

@article {knutson.lam.speyer:juggling,
    AUTHOR = {Knutson, Allen and Lam, Thomas and Speyer, David E.},
     TITLE = {Positroid varieties: juggling and geometry},
   JOURNAL = {Compos. Math.},
  FJOURNAL = {Compositio Mathematica},
    VOLUME = {149},
      YEAR = {2013},
    NUMBER = {10},
     PAGES = {1710--1752},
      ISSN = {0010-437X,1570-5846},
   MRCLASS = {14M15 (05B35 05E05)},
  MRNUMBER = {3123307},
MRREVIEWER = {Jacopo\ Gandini},
       DOI = {10.1112/S0010437X13007240},
       URL = {https://doi-org.ezproxy.lib.vt.edu/10.1112/S0010437X13007240},
}

@book{kock2004frobenius,
  title={Frobenius algebras and 2-d topological quantum field theories},
  author={Kock, Joachim},
  number={59},
  year={2004},
  publisher={Cambridge University Press}
}

@article{abrams1996two,
  title={Two-dimensional topological quantum field theories and {F}robenius algebras},
  author={Abrams, Lowell},
  journal={Journal of Knot theory and its ramifications},
  volume={5},
  number={05},
  pages={569--587},
  year={1996},
  publisher={World Scientific}
}

@article{lenart2006grothendieck,
  title={Grothendieck polynomials via permutation patterns and chains in the {B}ruhat order},
  author={Lenart, Cristian and Robinson, Shawn and Sottile, Frank},
  journal={American Journal of Mathematics},
  volume={128},
  number={4},
  pages={805--848},
  year={2006},
  publisher={Johns Hopkins University Press}
}

@article{yeliussizov2019symmetric,
  title={Symmetric {G}rothendieck polynomials, skew {C}auchy identities, and dual filtered {Y}oung graphs},
  author={Yeliussizov, Damir},
  journal={Journal of Combinatorial Theory, Series A},
  volume={161},
  pages={453--485},
  year={2019},
  publisher={Elsevier}
}

@article{korff2014quantum,
  title={Quantum cohomology via vicious and osculating walkers},
  author={Korff, Christian},
  journal={Letters in Mathematical Physics},
  volume={104},
  number={7},
  pages={771--810},
  year={2014},
  publisher={Springer}
}

@article{gorbounov.korff.mihalcea:YB,
author = {Gorbounov, V. and Korff, C. and Mihalcea, L.~C.},
title = {Quantum {K}-theory of {G}rassmannians from a {Y}ang-{B}axter algebra},
journal={submitted, preprint available at arXiv:2503.08602},
year={2025},
}

@article{gessel1997cylindric,
  title={Cylindric partitions},
  author={Gessel, Ira and Krattenthaler, Christian},
  journal={Transactions of the American Mathematical Society},
  volume={349},
  number={2},
  pages={429--479},
  year={1997}
}

@article {fulton.woodward:quantum,
    AUTHOR = {Fulton, W. and Woodward, C.},
     TITLE = {On the quantum product of {S}chubert classes},
   JOURNAL = {J. Algebraic Geom.},
  FJOURNAL = {Journal of Algebraic Geometry},
    VOLUME = {13},
      YEAR = {2004},
    NUMBER = {4},
     PAGES = {641--661},
      ISSN = {1056-3911,1534-7486},
   MRCLASS = {14N35 (14M15 14N15)},
  MRNUMBER = {2072765},
MRREVIEWER = {Harry\ Tamvakis},
       DOI = {10.1090/S1056-3911-04-00365-0},
       URL = {https://doi-org.ezproxy.lib.vt.edu/10.1090/S1056-3911-04-00365-0},
}

@article {nekrasov.shatashvili:supersymmetric,
    AUTHOR = {Nekrasov, Nikita A. and Shatashvili, Samson L.},
     TITLE = {Supersymmetric vacua and {B}ethe ansatz},
   JOURNAL = {Nuclear Phys. B Proc. Suppl.},
  FJOURNAL = {Nuclear Physics B. Proceedings Supplement},
    VOLUME = {192/193},
      YEAR = {2009},
     PAGES = {91--112},
      ISSN = {0920-5632},
   MRCLASS = {81T60 (81R12)},
  MRNUMBER = {2570974},
       DOI = {10.1016/j.nuclphysbps.2009.07.047},
       URL = {https://doi.org/10.1016/j.nuclphysbps.2009.07.047},
}

@article{Buch2002,
  author  = {Anders Skovsted Buch},
  title   = {A {L}ittlewood--{R}ichardson Rule for the {K}-Theory of {G}rassmannians},
  journal = {Acta Mathematica},
  volume  = {189},
  number  = {1},
  pages   = {37--78},
  year    = {2002},
  doi     = {10.1007/BF02392644},
  eprint  = {math/0004137},
  archivePrefix = {arXiv}
}

@article{Nielsen1974,
  author  = {Nielsen, H. Andreas},
  title   = {Diagonalizably linearized coherent sheaves},
  journal = {Bulletin de la Soci{\'e}t{\'e} Math{\'e}matique de France},
  volume  = {102},
  pages   = {85--97},
  year    = {1974},
  doi     = {10.24033/bsmf.1771},
  mrnumber = {51 #3174},
  zbl     = {0293.14006}
}

@article{ikeda2013k,
  title={K-theoretic analogues of factorial {S}chur {P}-and {Q}-functions},
  author={Ikeda, Takeshi and Naruse, Hiroshi},
  journal={Advances in Mathematics},
  volume={243},
  pages={22--66},
  year={2013},
  publisher={Elsevier}
}

@article{lieb1967residual,
  title={Residual entropy of square ice},
  author={Lieb, Elliott H},
  journal={Physical Review},
  volume={162},
  number={1},
  pages={162},
  year={1967},
  publisher={APS}
}

@article{sutherland1967exact,
  title={Exact solution of a two-dimensional model for hydrogen-bonded crystals},
  author={Sutherland, Bill},
  journal={Physical Review Letters},
  volume={19},
  number={3},
  pages={103},
  year={1967},
  publisher={APS}
}

@article{fisher1984walks,
  title={Walks, walls, wetting, and melting},
  author={Fisher, Michael E},
  journal={Journal of Statistical Physics},
  volume={34},
  number={5},
  pages={667--729},
  year={1984},
  publisher={Springer}
}

@article{brak1999bethe,
  title={From the {B}ethe ansatz to the {Gessel-Viennot} theorem},
  author={Brak, R and Essam, JW and Owczarek, AL},
  journal={Annals of Combinatorics},
  volume={3},
  number={2},
  pages={251--263},
  year={1999},
  publisher={Springer}
}

@inproceedings{bousquet2006three,
  title={Three osculating walkers},
  author={Bousquet-M{\'e}lou, Mireille},
  booktitle={Journal of Physics: Conference Series},
  volume={42},
  number={1},
  pages={35--46},
  year={2006}
}

@article{Huang1996,
title = {Interacting dimers on the honeycomb lattice: an exact solution of the five-vertex model},
journal = {Physica A: Statistical Mechanics and its Applications},
volume = {228},
number = {1},
pages = {1-32},
year = {1996},
issn = {0378-4371},
doi = {https://doi.org/10.1016/S0378-4371(96)00057-X},
url = {https://www.sciencedirect.com/science/article/pii/S037843719600057X},
author = {H.Y. Huang and F.Y. Wu and H. Kunz and D. Kim}
}

@article{lascoux1982structurehopf,
  author   = {Alain Lascoux and Marcel-Paul Sch{\"u}tzenberger},
  title    = {{Structure de Hopf de l'anneau de cohomologie et de l'anneau de Grothendieck d'une vari{\'e}t{\'e} de drapeaux}},
  journal  = {C. R. Acad. Sci. Paris S{\'e}r. I Math.},
  volume   = {295},
  number   = {11},
  pages    = {629--633},
  year     = {1982}
}

@misc{zinnjustin2009sixvertex,
      title={Six-Vertex, Loop and Tiling models: Integrability and Combinatorics}, 
      author={P. Zinn-Justin},
      year={2009},
      eprint={0901.0665},
      archivePrefix={arXiv},
      primaryClass={math-ph},
      url={https://arxiv.org/abs/0901.0665}, 
}

@article{brubaker2011schur,
  author  = {Brubaker, Ben and Bump, Daniel and
             Friedberg, Solomon},
  title   = {{Schur} Polynomials and the {Yang--Baxter} Equation},
  journal = {Communications in Mathematical Physics},
  volume  = {308},
  number  = {2},
  pages   = {281--301},
  year    = {2011},
  doi     = {10.1007/s00220-011-1345-3}
}

@article {bump:mcnamara:nakasuji,
    AUTHOR = {Bump, Daniel and McNamara, Peter J. and Nakasuji, Maki},
     TITLE = {Factorial {S}chur functions and the {Y}ang-{B}axter equation},
   JOURNAL = {Comment. Math. Univ. St. Pauli},
  FJOURNAL = {Commentarii Mathematici Universitatis Sancti Pauli},
    VOLUME = {63},
      YEAR = {2014},
    NUMBER = {1-2},
     PAGES = {23--45},
      ISSN = {0010-258X},
   MRCLASS = {05E05},
  MRNUMBER = {3328423},
MRREVIEWER = {Eric S. Egge},
}

@article{brubaker2020vertex,
  author  = {Brubaker, Ben and Buciumas, Valentin and
             Bump, Daniel and Gustafsson, Henrik P. A.},
  title   = {Vertex Operators, Solvable Lattice Models and
             Metaplectic {Whittaker} Functions},
  journal = {Communications in Mathematical Physics},
  volume  = {380},
  number  = {2},
  pages   = {535--579},
  year    = {2020},
  doi     = {10.1007/s00220-020-03842-w}
}

@article{brubaker2021colored,
  author  = {Brubaker, Ben and Buciumas, Valentin and
             Bump, Daniel and Gustafsson, Henrik P. A.},
  title   = {Colored Five-Vertex Models and {D}emazure Atoms},
  journal = {Journal of Combinatorial Theory, Series A},
  volume  = {178},
  pages   = {105354},
  year    = {2021},
  doi     = {10.1016/j.jcta.2020.105354}
}

@article{korff21,
  title={Cylindric {H}ecke Characters and {G}romov--{W}itten Invariants via the Asymmetric Six-Vertex Model},
  author={Korff, Christian},
  journal={Communications in Mathematical Physics},
  volume={381},
  number={2},
  pages={591--640},
  year={2021},
  publisher={Springer Berlin Heidelberg},
  eprint={https://arxiv.org/abs/1906.02565},
}

@article{buciumas2022double,
  title={Double {G}rothendieck polynomials and colored lattice models},
  author={Buciumas, Valentin and Scrimshaw, Travis},
  journal={International Mathematics Research Notices},
  volume={2022},
  number={10},
  pages={7231--7258},
  year={2022},
  publisher={Oxford University Press}
}

@article{corteel2022vertex,
  title={A vertex model for {LLT} polynomials},
  author={Corteel, Sylvie and Gitlin, Andrew and Keating, David and Meza, Jeremy},
  journal={International Mathematics Research Notices},
  volume={2022},
  number={20},
  pages={15869--15931},
  year={2022},
  publisher={Oxford University Press}
}

@article{brubaker2023frozen,
  title={Frozen pipes: lattice models for Grothendieck polynomials},
  author={Brubaker, Ben and Frechette, Claire and Hardt, Andrew and Tibor, Emily and Weber, Katherine},
  journal={Algebraic Combinatorics},
  volume={6},
  number={3},
  pages={789--833},
  year={2023}
}

@article{aggarwal2023free,
  title={Free fermion six vertex model: symmetric functions and random domino tilings},
  author={Aggarwal, Amol and Borodin, Alexei and Petrov, Leonid and Wheeler, Michael},
  journal={Selecta Mathematica},
  volume={29},
  number={3},
  pages={36},
  year={2023},
  publisher={Springer}
}

@article{naprienko2024free,
  title={Free fermionic Schur functions},
  author={Naprienko, Slava},
  journal={Advances in Mathematics},
  volume={436},
  pages={109413},
  year={2024},
  publisher={Elsevier}
}

@article{brubaker2025lattice,
  title={Lattice Models for Double {W}hittaker Polynomials and Motivic {C}hern Classes},
  author={Brubaker, Ben and Bump, Daniel and Hardt, Andrew and Spink, Hunter},
  journal={arXiv preprint arXiv:2509.17312},
  year={2025}
}

@book{FultonYoungTableaux,
  author    = {Fulton, William},
  title     = {Young {T}ableaux: With Applications to Representation Theory and Geometry},
  series    = {London Mathematical Society Student Texts},
  volume    = {35},
  publisher = {Cambridge University Press},
  address   = {Cambridge},
  year      = {1997}
}

@article{Graham2001,
  author  = {Graham, William},
  title   = {Positivity in Equivariant {S}chubert Calculus},
  journal = {Duke Mathematical Journal},
  volume  = {109},
  number  = {3},
  pages   = {599--614},
  year    = {2001},
  doi     = {10.1215/S0012-7094-01-10935-6}
}

@article{Brion2002,
  author  = {Brion, Michel},
  title   = {Positivity in the {G}rothendieck Group of Complex Flag Varieties},
  journal = {Journal of Algebra},
  volume  = {258},
  number  = {1},
  pages   = {137--159},
  year    = {2002},
  doi     = {10.1016/S0021-8693(02)00505-7}
}

@article{Buch2015,
  author  = {Buch, Anders Skovsted},
  title   = {Mutations of Puzzles and Equivariant Cohomology of Two-Step Flag Varieties},
  journal = {Annals of Mathematics},
  volume  = {182},
  number  = {1},
  pages   = {173--220},
  year    = {2015},
  doi     = {10.4007/annals.2015.182.1.4}
}

@article{BKPT2016,
  author  = {Buch, Anders Skovsted and Kresch, Andrew and Purbhoo, Kevin and Tamvakis, Harry},
  title   = {The Puzzle Conjecture for the Cohomology of Two-Step Flag Manifolds},
  journal = {Journal of Algebraic Combinatorics},
  volume  = {44},
  number  = {4},
  pages   = {973--1007},
  year    = {2016},
  doi     = {10.1007/s10801-016-0697-3}
}

@article{KnutsonZinnJustin2026,
  author  = {Knutson, Allen and Zinn-Justin, Paul},
  title   = {Schubert Puzzles and Integrability {I}: Invariant Trilinear Forms},
  journal = {Communications of the American Mathematical Society},
  volume  = {6},
  pages   = {1--67},
  year    = {2026},
  doi     = {10.1090/cams/56},
  eprint  = {1706.10019},
  archivePrefix = {arXiv}
}

@article{Mihalcea2006,
  author  = {Mihalcea, Leonardo C.},
  title   = {Equivariant Quantum {S}chubert Calculus},
  journal = {Advances in Mathematics},
  volume  = {203},
  number  = {1},
  pages   = {1--33},
  year    = {2006},
  doi     = {10.1016/j.aim.2005.04.002}
}

@article{FultonLascoux1994,
  author  = {Fulton, William and Lascoux, Alain},
  title   = {A {P}ieri Formula in the {G}rothendieck Ring of a Flag Bundle},
  journal = {Duke Mathematical Journal},
  volume  = {76},
  number  = {3},
  pages   = {711--729},
  year    = {1994},
  doi     = {10.1215/S0012-7094-94-07627-8}
}

@misc{Oet21,
  author        = {Oetjen, David},
  title         = {Factorial {Schur} and {Grothendieck}
                   Polynomials from {Bott--Samelson}
                   Varieties},
  year          = {2021},
  eprint        = {2109.10483},
  archivePrefix = {arXiv},
  primaryClass  = {math.AG}
}
\end{document}